\numberwithin{equation}{section}
\let\oldtocsection=\tocsection
\let\oldtocsubsection=\tocsubsection
\let\oldtocsubsubsection=\tocsubsubsection
\renewcommand{\tocsection}[2]{\hspace{0em}\oldtocsection{#1}{#2}}
\renewcommand{\tocsubsection}[2]{\hspace{1em}\oldtocsubsection{#1}{#2}}
\renewcommand{\tocsubsubsection}[2]{\hspace{2em}\oldtocsubsubsection{#1}{#2}}
\renewcommand*{\eqref}[1]{%
	\hyperref[{#1}]{\textup{\tagform@{\!\!\ref*{#1}}}}%
}
\newcommand{\opnorm}{\@ifstar\@opnorms\@opnorm}
\newcommand{\@opnorms}[1]{%
	\left|\mkern-1.5mu\left|\mkern-1.5mu\left|
	#1
	\right|\mkern-1.5mu\right|\mkern-1.5mu\right|
}
\newcommand{\@opnorm}[2][]{%
	\mathopen{#1|\mkern-1.5mu#1|\mkern-1.5mu#1|}
	#2
	\mathclose{#1|\mkern-1.5mu#1|\mkern-1.5mu#1|}
}
\makeatother\theoremstyle{plain}
\newtheorem{theorem}{Theorem}[section]
\newtheorem{proposition}[theorem]{Proposition}
\newtheorem{lemma}[theorem]{Lemma}
\theoremstyle{definition}
\newtheorem{remark}[theorem]{Remark}
\def\L{\mathcal{L}}
\def\grad{\nabla_{\mathbb H}}
\def\Kor{\left|(z,t)\right|_{\mathbb H}}
\newcommand{\jap}[1]{\left\langle#1\right\rangle}
\newcommand{\norm}[1]{\left\lVert#1\right\rVert}
\def\til{~}
\def\supp{\mathop{\mathrm{supp}}\nolimits}
\def\Id{\mathop{\mathrm{Id}}\nolimits}
\def\Ran{\mathop{\mathrm{Ran}}\nolimits}
\def\Ker{\mathop{\mathrm{Ker}}\nolimits}
\def\Re{\mathop{\mathrm{Re}}\nolimits}
\def\Im{\mathop{\mathrm{Im}}\nolimits}
\def\loc{\mathop{\mathrm{loc}}\nolimits}
\def\sgn{\mathop{\mathrm{sgn}}\nolimits}
\def\R{{\mathbb{R}}}
\def\N{{\mathbb{N}}}
\def\C{{\mathbb{C}}}
\def\Heis{{\mathbb{H}}}
\def\S_\sphere{{\mathbb{S}}}
\def\S{{\mathcal{S}}}
\def\F{{\mathcal{F}}}
\def\H{{\mathcal{H}}}
\def\<{{\langle}}
\def\>{{\rangle}}
\def\ep{{\varepsilon}}
\newcommand{\red}[1]{{\color{red}#1}}
\newcommand{\blue}[1]{{\color{blue}#1}}
\author[L.\ Fanelli]{Luca Fanelli}
\address[Luca Fanelli]{
	Ikerbasque, Basque Foundation for Science,
	48011 Bilbao, Spain,
	\newline \phantom{\quad} \&
	Universidad del Pa\'is Vasco / Euskal Herriko Unibertsitatea,
	48080 Bilbao, Spain,
	\newline \phantom{\quad}\&
	BCAM -- Basque Center for Applied Mathematics,
	48009 Bilbao, Spain}
\email{\href{mailto:luca.fanelli@ehu.es}{luca.fanelli@ehu.eus}}
\author[H.\ Mizutani]{Haruya Mizutani}
\address[Haruya Mizutani]{
	Department of Mathematics, Graduate School of Science, Osaka University, 
	\newline \phantom{\quad} Toyonaka, 560-0043 Osaka, Japan}
\email{\href{mailto:haruya@math.sci.osaka-u.ac.jp}{haruya@math.sci.osaka-u.ac.jp}}
\author[L.\ Roncal]{Luz Roncal}
\address[Luz Roncal]{
	BCAM -- Basque Center for Applied Mathematics,
	48009 Bilbao, Spain,
	\newline \phantom{\quad} \&
	Ikerbasque, Basque Foundation for Science,
	48011 Bilbao, Spain,
	\newline \phantom{\quad} \&
	Universidad del Pa\'is Vasco / Euskal Herriko Unibertsitatea,
	48080 Bilbao, Spain}
\email{\href{mailto:lroncal@bcamath.org}{lroncal@bcamath.org}}
\author[N. M. Schiavone]{Nico Michele Schiavone}
\address[Nico M. Schiavone]{
	Departamento de Matem\'atica e Inform\'atica Aplicadas a la Ingenier\'{i}a Civil y Naval, 
	\newline \phantom{\quad} 	Universidad Polit\'ecnica de Madrid, 
	\newline \phantom{\quad} 		28040 Madrid, Spain}
\email{\href{mailto:nico.schiavone@upm.es}{nico.schiavone@upm.es}}
\keywords{Sublaplacian, Heisenberg group, resolvent estimates, smoothing, spectral stability}
\subjclass[2020]{Primary: 35R03; Secondary: 35P99, 47A10, 47B28, 47F05}
\date{\today}
\title[Uniform resolvent estimates for the sublaplacian]{Uniform resolvent estimates, smoothing effects and spectral stability for the Heisenberg sublaplacian}
\begin{document}

\maketitle

\begin{abstract}
%
We establish global bounds for solutions to stationary and time-dependent Schr\"odinger equations associated with the sublaplacian $\mathcal L$ on the Heisenberg group, as well as its pure fractional power $\mathcal L^s$ and conformally invariant fractional power $\mathcal L_s$. The main ingredient is a new abstract uniform weighted resolvent estimate which is proved by using the method of weakly conjugate operators --a variant of Mourre's commutator method-- and Hardy's type inequalities on the Heisenberg group. As applications, we show 
Kato-type smoothing effects for the time-dependent Schr\"odinger equation, 
and spectral stability of the sublaplacian  perturbed by complex-valued decaying potentials satisfying an explicit subordination condition. In the local case $s=1$, we obtain uniform estimates without any symmetry or derivative loss, which improve previous results. 
%

\end{abstract}


\section{Introduction}\label{sec:intro}
Many physical systems evolve within the laws of constrained dynamics, operating within specific geometric settings, where the movement is restricted to predetermined directions within the tangent space. This fundamental premise has spurred the advancement of the sub-Riemannian geometry, which starts with the famous paper by Cartan \cite{C}. A fundamental aspect of sub-Riemannian structures lies in their ability to regain the ``missing directions'' within the tangent space by iteratively computing commutators of vector fields that characterize the partial differential operator under consideration.  Among the large amount of physical applications, we mention Quantum Mechanics, non-Holonomic Mechanics, the Physiology of Neurovision, etc. 
The typical framework for modeling such systems involves the so called non-abelian Carnot groups (or {\it stratified Groups}), which are non-abelian Lie groups $\mathbb G$, (i.e. groups which are also Riemannian manifolds, with a non-commutative group law) with a Lie algebra (i.e. the tangent space at the identity) which exhibits a layered structure, mirroring the underlying physical context.
The toy model is given by the well known $(2d+1)$-dimensional {\it Heisenberg Group} $\mathbb H^d$, introduced by Weil to describe his theoretical approach to Quantum Mechanics based on Group Representation Theory. Naturally connected to the sub-Riemannian structure of $\mathbb H^d$, the {\it sublaplacian} $\mathcal L$ is the hypoelliptic operator defined as the sum of the squares of suitable horizontal vector fields which generate, together with their commutators, the Lie Algebra (see the definition \eqref{eq:sublaplacian} below). 

The purpose of the present manuscript is to describe how the sub-Riemannian (in particular, the anisotropic) structure of $\mathbb H^d$ affects to the global properties of solutions to Schr\"odinger equations associated with $\mathcal L$, as well as its pure fractional power $\mathcal L^s$ and conformally invariant fractional power $\L_s$. We mainly focus on the so-called {\it uniform resolvent estimates} for the stationary problem and {\it Kato-type local smoothing effects} for the time-dependent problem. 

Let us first introduce the topic in the standard Euclidean setting. 
In the well celebrated paper \cite{KY}, Kato and Yajima proved the following resolvent estimate
\begin{equation}\label{eq:ky1}
\left\|\langle x\rangle^{-\frac12-\ep}\<D\>^{\frac12}(-\Delta-z)^{-1}\<D\>^{\frac12}\langle x\rangle^{-\frac12-\ep}f\right\|_{L^2(\R^d)}\leq C\|f\|_{L^2(\R^d)},
\end{equation}
for any $z\in\mathbb C\setminus[0,\infty)$, $\ep>0$, $d\ge3$, and
for some constant $C>0$, independent on $z$ and $f$. Here $\langle x\rangle:=\sqrt{1+|x|^2}$,  $|D|:=\mathcal F^{-1} |\xi|\mathcal F $ and $\Delta:=-\F^{-1}|\xi|^2\F$ is the Laplacian on $\R^d$. Using the Kato--Yajima terminology, the operator $\langle x\rangle^{-\frac12-\epsilon}\<D\>^{\frac12}$ is $-\Delta$-{\it supersmooth}, and this fact standardly implies by Kato-smoothing (see \cite{Kato}) that the Schr\"odinger evolution flow satisfies the following local smoothing effects
\begin{equation}\label{eq:ky2}
\left\|\langle x\rangle^{-\frac12-\ep}\<D\>^{\frac12}e^{i\tau\Delta}f\right\|_{L^2(\R^{d+1})}\leq C\|f\|_{L^2(\R^d)},
\end{equation}
with the same conditions on the parameters as above. Estimates \eqref{eq:ky2} with $\<D\>^\frac12$ replaced by $|D|^\frac12$ were proved by Ben-Artzi and Klainerman in \cite{BK} by an equivalent method based on the functional calculus representation. As for the case of homogeneous weights, the following family of estimates hold: 
\begin{equation}\label{eq:ky3}
\left\||x|^{-\alpha+\beta}|D|^{\beta}(-\Delta-z)^{-1}|D|^{\beta}|x|^{-\alpha+\beta}f\right\|_{L^2(\R^d)}\leq C\|f\|_{L^2(\R^d)}
\end{equation}
provided $\alpha-\frac d2<\beta<\alpha-\frac12$ (see \cite{KY} for $0\le \beta<\alpha-\frac12$ and \cite{Sugimoto} for $\alpha-\frac d2<\beta<0$). As above, by Kato-smoothing one has
\begin{equation}\label{eq:ky4}
\left\||x|^{-\alpha+\beta}|D|^{\beta}e^{i\tau\Delta}f\right\|_{L^2(\R^{d+1})}\leq C\|f\|_{L^2(\R^d)}.
\end{equation}
A particularly relevant  case is when $\alpha=1,\beta=0$ in \eqref{eq:ky3}, namely
\begin{equation}\label{eq:ky5}
\left\||x|^{-1}(-\Delta-z)^{-1}|x|^{-1}f\right\|_{L^2(\R^d)}\leq C\|f\|_{L^2(\R^d)},
\end{equation}
which at the zero-energy $z=0$ is, at least formally, related with the weighted Rellich inequality
$$
\left\||x|^{-1}f\right\|_{L^2(\R^d)}\leq C\||x|\Delta f\|_{L^2(\R^d)}.
$$
We also mention the paper by Simon \cite{Simon}  in which a general method to produce the sharp constants in \eqref{eq:ky2} is provided. To illustrate the motivation, we also recall the $L^p$-$L^q$ estimates
\begin{align}
\label{eq:ky6}
\|e^{i\tau\Delta}f\|_{L^q(\R^d)}\lesssim |\tau|^{-d(1/2-1/q)}\|f\|_{L^p(\R^d)},\quad \tau\neq0,\ 1\le p\le 2\le q\le\infty,\ 1/p+1/q=1,
\end{align}
and the Strichartz estimates (see \cite{Strichartz, GV, Yajima, KT})
\begin{align}
\label{eq:ky7}
\|e^{i\tau\Delta}f\|_{L^p(\R;L^q(\R^d))}\lesssim \|f\|_{L^2(\R^d)},\quad p\ge2,\ \frac 2p+\frac dq=\frac d2,\ (d,p,q)\neq(2,2,\infty). 
\end{align}
In the last decades, these estimates \eqref{eq:ky1}--\eqref{eq:ky7} turned out to be fundamental to understand the associated time-dispersive flows (see e.g. \cite{Kato, RS}) as well as the properties of the discrete spectrum, even in the case in which the operator of interest is not self-adjoint (see e.g. \cite{BPST1, BPST2, EGS, DF, MY} and references therein). 

We now move to the Heisenberg setting. In contrast with the Euclidean case, a peculiar fact emerges in the study of the Schr\"odinger evolution equation associated to $\mathcal L$, which fails to be dispersive, since it possesses (uncountably many) soliton-like solutions, as it has been observed by Bahouri, G\'erard and Xu in \cite{BGX}. Precisely, for any $g\in C_0^\infty(\R)$ supported in $(0,\infty)$, we set 
$$
f_*(z,t):=\int_\R e^{it\lambda}e^{-\lambda|z|^2}g(\lambda)d\lambda,\quad (z,t)\in \mathbb H^d,
$$
where $z$ and $t$ denote the horizontal and vertical variables in $\mathbb H^d$, respectively (see the beginning of Section \ref{main_result} for basic notions of $\mathbb H^d$). Then the solution $e^{-i\tau \mathcal L}f_*$ to the Schr\"odinger equation $(i\partial_\tau-\L)e^{-i\tau \mathcal L}f_*=0$ with the initial datum $f_*$ is given by the translation in $t$-direction with speed $4d$, namely
$$
e^{-i\tau \mathcal L}f_*(z,t)=f_*(z,t-4d\tau ). 
$$
In particular, one has $$\|e^{-i\tau \mathcal L}f_*\|_{X}=\|f_*\|_{X}$$ for any $\tau\in \R$ and any function space $X$ on $\mathbb H^d$ with norm $\|\cdot\|_X$ invariant under the translation in the $t$-variable. As a consequence, any (global-in-time) translation-invariant dispersive estimate, such as the Strichartz or $L^p$-$L^q$ estimates, {\it cannot hold} for $e^{-i\tau \mathcal L}$. Recently, Bahouri, Barilari and Gallagher in \cite{BBG} 
however showed that, despite the lack of global dispersion, one can recover a weaker version of Strichartz estimates than the Euclidean case by restricting initial data to {\it cylindrical}\,\footnote{$f(z,t)$ is said to be  \emph{cylindrical} if $f(z,t)=\tilde f(|z|,t)$ with some $\tilde f \colon [0,\infty)\times \R\to \C$, and be \emph{radial} if  $f(z,t)=\tilde f(|(z,t)|_{\mathbb H})$ with some $\tilde f \colon [0,\infty) \to \C$, where $|(z,t)|_{\mathbb H}=(|z|^4+t^2)^{1/4}$ denotes the Koranyi norm. Note that the literature is not unanimous in the use of this nomenclature: the functions that here, sticking to the terminology of e.g. \cite{GL} and of our previous work \cite{FMRS}, we call cylindrical, are sometimes denominated radial.}  functions and by suitably averaging in time $\tau$ and space with respect to the horizontal variable $z$ and then taking the supremum in the vertical variable $t$. Several space-time weighted $L^2$-estimates {\it with loss of derivatives} were also proved by M\u{a}ntoiu \cite{Mantoiu} (see Section \ref{subsection_known} for more details). 

These results motivated us to start a parallel project in \cite{FMRS} in which we investigate the validity of uniform resolvent estimates for {\it radial} solutions via the multiplier method (see \cite{BVZ, CFK, CK, FKV, FKV2} and the references therein), and, by the usual theory of $H$-smoothness by Kato in \cite{Kato}, the consequent Kato-smoothing effects. This paper is a continuation of \cite{FMRS} to further investigate uniform resolvent estimates on $\mathbb H^d$ {\it without any symmetry} or {\it derivative loss}. 
Our main results are Theorem~\ref{theorem_resolvent_general} and  Theorem~\ref{theorem_resolvent_radial}. In Theorem~\ref{theorem_resolvent_general}, we provide uniform resolvent estimates for $\L$, as well as the fractional sublaplacians $\L^s$ and $\L_s$,  between $L^2$ spaces weighted with suitable operators related with Hardy's type inequalities on $\mathbb H^d$ (and thus the geometry of $\mathbb H^d$). In Theorem~\ref{theorem_resolvent_radial}, under suitable anisotropic symmetric assumption, we see how some of these weighted operators change and particularly recover similar uniform resolvent estimates as in the Euclidean case.   As already explained in the Euclidean case, our uniform resolvent estimates can be directly applied to obtain smoothing effects for the Schr\"{o}dinger equation and to the spectral theory of perturbed operators---these results are reported in Section~\ref{sec:applications}.


\section{Main results}
\label{main_result}

We first recall some definitions and basic notions of the functional framework on the Heisenberg group (see e.g. the standard textbook \cite{Folland2} for a detailed description). 

Let $d\in \N$ and $\mathbb H^d=\R^{2d+1}$ be the $d$-dimensional Heisenberg group endowed with the group law
$$
(z,t)\cdot(z',t')=(x+x',y+y',t+t'+2x'\cdot y-2x\cdot y'),
$$
where $z=(x,y)=(x_1,\dots,x_d,y_1,\dots,y_d), z'=(x',y') \in \R^d\times \R^d$, $t,t'\in \R$ and $x\cdot y=x_1y_1+\cdots+x_dy_d$. 
Let us also equip $\mathbb H^d$ 
with the metric structure induced by the Koranyi norm 
\begin{align}
\label{Koranyi}
|(z,t)|_{\mathbb H}:=(|z|^4+t^2)^{1/4}
\end{align}
 and
with the Haar measure $dzdt$, and we define $L^2(\mathbb H^d)$ with the usual inner product and norm
$$
\<f,g\>:=\int_{\mathbb H^d}f(z,t)\overline{g(z,t)}dzdt,\quad \|f\|_{L^2(\mathbb H^d)}:=\left(\int_{\mathbb H^d}|f(z,t)|^2dzdt\right)^{1/2}. 
$$
For $j=1,...,d$, we introduce $X_j,Y_j$  and $T$ the left invariant vector fields on $\mathbb H^d$ defined by
$$
X_j=\partial_{x_j}+2y_j\partial_t,\quad Y_j=\partial_{y_j}-2x_j\partial_t,\quad T=\partial_t,
$$
and the span $\mathcal D:=\text{span}\{X_j,Y_j\}_{j=1,\dots,d}$ which provides a sub-Riemannian structure on $\Heis^d$. 
These vector fields satisfy the commutation relations
\begin{equation}\label{eq:commutators}
	[X_j,X_k] = [Y_j,Y_k] = 0, 
	\qquad
	[X_j,Y_k] = - 4 \delta_{jk} T,
\end{equation}
with $j,k = 1, \dots, d$ and $\delta_{jk}$ being the Kronecker's delta.

Setting the horizontal gradient $$\nabla_{\mathbb H}=(X,Y)=(X_1,...,X_d,Y_1,...,Y_d),
$$ we define the sublaplacian $\L$ on $\mathbb H^d$ by
\begin{equation}\label{eq:sublaplacian}
\L = -\Delta_{\mathbb H} = -\nabla_{\mathbb H}\cdot \nabla_{\mathbb H} = -\sum_{j=1}^d (X_j^2+Y_j^2).
\end{equation}
The operators $\nabla_{\mathbb H}$ and $\Delta_{\mathbb H}$ can be also written in the form 
\begin{align}
\label{sublaplacian_2}
\nabla_{\mathbb H}=\partial_z+2z^\perp T,\quad \Delta_{\mathbb H}=\partial_z^2+4NT+4|z|^2T^2,
\end{align}
where $z^\perp:=(y,-x)$ and
	$$
	N:=z^\perp\cdot\partial_z = \partial_z\cdot z^\perp =\sum_{j=1}^d(y_j\partial_{x_j}-x_j\partial_{y_j}).
	$$
It is known that $\L$ is a non-negative hypoelliptic differential operator, which is essentially self-adjoint on the Schwartz class $\S(\mathbb H^d)=\S(\R^{2d+1})$ and thus admits a unique self-adjoint extension on $L^2(\mathbb H^d)$ with the maximal domain 
$$
D(\L):=\{f\in L^2(\mathbb H^d)\ |\ \L f\in L^2(\mathbb H^d)\}.
$$
With a slight abuse of notation, we still use $\L$ for denoting such a unique extension. 
It is also known that $\L$ has the purely absolutely continuous spectrum $[0,\infty)$. Precisely, $
\sigma(\L)=\sigma_{\mathrm{ac}}(\L)=[0,\infty)$ and $\sigma_{\mathrm{sc}}(\L)=\sigma_{\mathrm{pp}}(\L)=\varnothing$, 
where $\sigma(\L)$, $\sigma_{\mathrm{ac}}(\L)$, $\sigma_{\mathrm{sc}}(\L)$ and $\sigma_{\mathrm{pp}}(\L)$ denote the spectrum, the absolutely continuous, singular and pure point spectrum of $\L$, respectively.  In particular, $\L$ has no eigenvalues: $\sigma_{\mathrm{p}}(\L)=\varnothing$, with $\sigma_{\mathrm{p}}(\L)$ denoting the point spectrum. We refer to \cite[Section 4]{DMW} (see also \cite{J}) for more details on the basic spectral properties of $\L$. We recall also that $\L$ is \emph{homogeneous of degree 2} with respect to the natural dilation $\delta_\lambda$ on the Heisenberg group defined by
\begin{equation}\label{scaling}
	\delta_\lambda(z,t) = (\lambda z, \lambda^2 t), \quad \lambda>0,
\end{equation}
in the sense that $\L(f\circ \delta_\lambda) = \lambda^2 (\L f) \circ \delta_\lambda$. See the beginning of Section \ref{section_abstract} for the dilation group on $\mathbb H^d$. We emphasize that this distinctive dilation group and the Koranyi norm \eqref{Koranyi} particularly represent the anisotropic structure of $\mathbb H^d$. 

In addition to the sublaplacian $\mathcal L$, we also consider in this work the pure fractional sublaplacian $\mathcal L^s:=(\mathcal L)^s$ for $s>0$ and the conformal fractional sublaplacian $\mathcal L_s$ for $0<s<d+1$. We refer to Appendix\til\ref{sec:Ls} below for their definitions, as well as backgrounds.  


To state the results, we further introduce four weight functions $w_i$, $i=1,\dots,4$, below
\begin{align*}
w_1(z,t)&=\<z\>^{-1}\<|(z,t)|_{\mathbb H}\>^{-2}|z|,& w_2(z,t)&=(1+|z|^2+|t|^2)^{-1/2},\\ w_3(z,t)&=\<|(z,t)|_{\mathbb H}\>^{-2}|z|,& w_4(z,t)&=|(z,t)|_{\mathbb H}^{-2}|z|,
\end{align*}
where $\<\cdot\>=\sqrt{1+|\cdot|^2}$. We record a few basic properties of these weights: 
\begin{itemize}
\item $w_1\le w_3\le w_4$ on $\mathbb H^d$;
\item $|w_j|\le 1$ on $\mathbb H^d$ for $j=1,2,3$, while $w_4$ blows up at the origin $(z,t)=(0,0)$;
\item $w_1,w_3=0$ on the $t$-axis $\{(0,t)\in \mathbb H^d\ |\ t\in \R\}$ and $w_4=0$ on $\{(0,t)\in \mathbb H^d\ |\ t\neq0\}$. 
\item $w_4$ is homogeneous of degree $-1$ with respect to the scaling \eqref{scaling}. 
\end{itemize}

We can now state our main results. In the following Theorems \ref{theorem_resolvent_general}, \ref{theorem_resolvent_radial}, \ref{theorem_smoothing_general} and \ref{theorem_smoothing_radial},  by $\H$ we will denote either $\L^s$ with $s>0$, or $\L_s$ with $0< s<d+1$.

\begin{theorem}[Resolvent estimates -- general case]
\label{theorem_resolvent_general}
Let $s>0$ and $\H=\L^s$, or let $0< s<d+1$ and $\H=\L_s$.
Suppose also that $d,s,\mu$ and $G$ satisfy one of the following four conditions: 
\begin{enumerate}[label=(\roman*)]
\item $d\ge1$, $1/2<s\le 1$ and $G=w_1^s\<\L\>^{(s-1)/2} $;
\item $d\ge2$, $1/2<s\le 1$ and $G=w_2^s\<\L\>^{(s-1)/2} $;
\item $d\ge2$, $s>0$, $1/2<\mu\le1$ and $G=\<N\>^{-\mu} w_3^\mu\L^{s/2}\<\L\>^{-1/4}$;
\item $d\ge2$, $s>0$, $1/2<\mu\le1$ and $G=\<N\>^{-\mu} w_4^\mu \L^{(s-\mu)/2}$.
\end{enumerate}
Then the uniform resolvent estimate
\begin{align}
\label{theorem_resolvent_general_1}
\sup_{\sigma\in \C\setminus\R}\|G(\H-\sigma)^{-1}G^*f\|_{L^2(\mathbb H^d)}
\lesssim
 \|f\|_{L^2(\mathbb H^d)},\quad f\in \S(\mathbb H^d),
\end{align}
holds with the implicit constant being independent of $f$. 
In particular, $G(\H-\sigma)^{-1}G^*$ extends to a bounded operator on $L^2(\mathbb H^d)$ with a uniform operator norm bound with respect to ${\sigma\in \C\setminus\R}$. 
\end{theorem}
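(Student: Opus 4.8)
The plan is to derive \eqref{theorem_resolvent_general_1} from an abstract uniform weighted resolvent estimate obtained by the method of weakly conjugate operators, applied to $H=\H$ with conjugate operator $A$ the self-adjoint generator of the anisotropic dilation group $\{\delta_\lambda\}_{\lambda>0}$ on $\mathbb H^d$, that is $iA = z\cdot\partial_z+2t\partial_t+\tfrac{Q}{2}$ with $Q=2d+2$, so that $e^{i\theta A}$ implements the unitary rescaling and $e^{-i\theta A}\L e^{i\theta A}=e^{2\theta}\L$. The abstract input I would use has the following shape: if $H\ge 0$ is self-adjoint and $A$ is a conjugate operator such that the commutator is a positive multiple of $H$, $[iH,A]=2sH$ in the form sense (the dilation-homogeneous case, which automatically controls all iterated commutators by $H$), then one has an energy-uniform limiting absorption principle in the scale of spaces attached to $A$, and, more to the point, a family of weighted estimates $\sup_{\sigma\in\C\setminus\R}\|G(H-\sigma)^{-1}G^*\|<\infty$ for weight operators $G$ subordinate to the pair $(H,A)$ in a precise sense: a Hardy-type domination of $G$ by the commutator together with a compatibility of $G$ with the dilation flow. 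Since $\S(\mathbb H^d)$ is a core, it suffices to prove \eqref{theorem_resolvent_general_1} on Schwartz functions.

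The hypotheses split into a structural part, uniform in the four cases, and an analytic part, where the geometry of $\mathbb H^d$ enters. For the structural part: $\L$ is homogeneous of degree $2$ under $\delta_\lambda$, hence $\L^s$ is homogeneous of degree $2s$; the conformal sublaplacian $\L_s$ is likewise homogeneous of degree $2s$ (conformal covariance) and, by its Plancherel description in the appendix, $c_1\L^s\le\L_s\le c_2\L^s$ with constants depending only on $d,s$, so every estimate for $\L^s$ transfers to $\L_s$. Consequently $e^{-i\theta A}\H e^{i\theta A}=e^{2s\theta}\H$, which gives $\H\in C^\infty(A)$, $[i\H,A]=2s\H$, and trivial iterated-commutator bounds; moreover the rotation generator $N$ commutes with both $\L$ and $A$ (it generates rotations in the horizontal variable $z$, which preserve both the sublaplacian and the anisotropic dilation), so the factor $\langle N\rangle^{-\mu}$ appearing in cases (iii)--(iv) passes freely through all commutators. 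Using this together with the homogeneities $[w_4^\mu,iA]=\mu w_4^\mu$, $[\L^{a},iA]=2a\L^{a}$ one checks, in each of the four cases, that $G$ has the correct dilation weight (in particular $[G^*G,iA]=2s\,G^*G$, and in case (iv) $G$ is exactly homogeneous of the degree forced by scaling).

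The analytic part is the Hardy-type subordination of $G$, and is where the Hardy inequalities on $\mathbb H^d$ are used; it is also where the thresholds $s>1/2$, $\mu>1/2$ and $d\ge 2$ (respectively $d\ge 1$) enter. The relevant scalar inputs are: the Garofalo--Lanconelli inequality $\int_{\mathbb H^d}\frac{|z|^2}{|(z,t)|_{\mathbb H}^{4}}|f|^2\,dzdt\lesssim\|\L^{1/2}f\|_{L^2}^2$ for the homogeneous weight $w_4$ (hence, since $w_1\le w_3\le w_4$, also for $w_1,w_3$ in their localized versions, valid for $w_1$ already when $d\ge 1$ thanks to the extra factor $\langle z\rangle^{-1}$, and for $w_3$ when $d\ge 2$); for $w_2$, the inequality $\int_{\mathbb H^d}\frac{|f|^2}{1+|z|^2+t^2}\,dzdt\lesssim\|\L^{1/2}f\|_{L^2}^2$, valid precisely when $d\ge 2$, which one obtains by combining the Euclidean Hardy inequality in the $2d$-dimensional horizontal variable with a one-dimensional estimate in $t$; and an auxiliary Herbst-type bound on $\mathbb H^d$ for the power $\L^{1/2}$ against the conjugate operator $A$. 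The fractional exponents $s$ and $\mu$ in $G$ are reached by complex (Heinz--Kato) interpolation of these endpoint inequalities against trivial $L^2$ bounds; the $\langle\L\rangle$-factors in $G$, being bounded for $s\le 1$, are harmless in cases (i)--(ii), and in case (iii) are precisely tuned so that the argument works for every $s>0$.

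I expect the main obstacle to be case (iv). There the weight $w_4^\mu$ is homogeneous and does not decay, so $G$ itself is homogeneous of degree $s$ and the soft reduction available in cases (i)--(iii) breaks down: this is the Heisenberg analogue of the genuinely delicate Kato--Yajima estimate $\||x|^{-1}(-\Delta-z)^{-1}|x|^{-1}\|\lesssim 1$, and it requires running the commutator argument with a multiplier adapted to the Koranyi gauge (of the type $\phi\approx|(z,t)|_{\mathbb H}^{2}\log|(z,t)|_{\mathbb H}$, truncated near the origin), so that $[\L,iA_\phi]$ produces the desired weight $w_4^{2}$ up to an error localized at the origin and an error carried by $N$, both of which are then absorbed using the Garofalo--Lanconelli inequality; the range $\mu\in(\tfrac12,1]$ and the presence of $\langle N\rangle^{-\mu}$ are exactly what make this absorption possible (note that $N$ annihilates the soliton states $f_*$, so $\langle N\rangle^{-\mu}$ acts only on the part of the spectrum where the angular energy is large). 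A secondary, purely technical point is the bookkeeping needed to make all commutator manipulations rigorous in the presence of the singular weight $w_4$ (which blows up at the origin) and of the non-commuting family $\L,A,N,T$; this is routine given $\H\in C^\infty(A)$.
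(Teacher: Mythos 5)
Your overall framework is right: the paper does use the weakly conjugate operator method with the dilation generator $A$, the homogeneity $[\H,iA]=2s\H$, and Hardy inequalities on $\mathbb H^d$ together with complex interpolation. But several specific choices you make diverge from the paper's proof, and one of them is a genuine misdirection.

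The paper's abstract estimate (Theorem \ref{theorem_abstract}) is stated not for $A$ itself but for the \emph{regularized} conjugate operators $A_\alpha=\<\L\>^{-\alpha/2}A\<\L\>^{-\alpha/2}$ and the Sobolev-type weights $S_\alpha=\L^{s/2}\<\L\>^{-\alpha/2}$, with the single uniform bound
$\sup_{\sigma}\|\<A_\alpha\>^{-\mu}S_\alpha(\H-\sigma)^{-1}S_\alpha\<A_\alpha\>^{-\mu}\|\le\kappa(\H,s,\mu)$.
This regularization is what produces, after a single Mourre/Gronwall iteration, estimates simultaneously adapted to the \emph{inhomogeneous} weights (cases (i)--(iii) require $\<\L\>$-factors, hence $\alpha=1$ and $\alpha=1/2$) and to the homogeneous weight (case (iv), with $\alpha=0$). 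You mention only the bare $A$, and it is not clear how your scheme would handle the $\<\L\>^{(s-1)/2}$ and $\<\L\>^{-1/4}$ factors appearing in $G$ for cases (i)--(iii) without something like $A_\alpha$. This is a missing structural ingredient.

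Your treatment of case (iv) is the main gap. You predict that the homogeneous weight $w_4^\mu$ breaks the reduction and requires a logarithmic multiplier $\phi\approx|(z,t)|_{\mathbb H}^{2}\log|(z,t)|_{\mathbb H}$ — but that is the multiplier method from the authors' earlier work \cite{FMRS} (the Barcel\'o--Vega--Zubeldia route), which imposes a \emph{radial} symmetry assumption and is precisely what this paper is designed to avoid. In the paper, case (iv) is not an obstacle at all: it is the $\alpha=0$ instance of the abstract theorem, and the only thing to check is the Hardy-type bound
$\|\<A\>\,\L^{-1/2}w_4\<N\>^{-1}\|\le 3+(d-1)^{-1}$
(Proposition \ref{proposition_replace}, \eqref{proposition_replace_6}), which is proven by expanding $A=-i(z\cdot\nabla_{\mathbb H}+2tT+d+1)$ and bounding each piece; there is no truncation near the origin and no absorption argument localized at $(z,t)=0$. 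The singular behaviour of $w_4$ is entirely handled by the sharp Hardy inequality \eqref{lemma_Hardy_1} applied to the $z\cdot\nabla_{\mathbb H}$ and constant pieces.

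Finally, your explanation of the role of $\<N\>^{-\mu}$ ("passes freely through all commutators", "acts only on the part of the spectrum where angular energy is large") is not the mechanism used. The factor $\<N\>^{-\mu}$ is \emph{necessary} because, when one expands $iA$ into $z\cdot\nabla_{\mathbb H}+2tT+\text{const}$, the term $w_3\cdot 2tT$ (resp.\ $w_4\cdot 2tT$) cannot be dominated by $\L^{1/2}$ alone: the identity $2|z|T = |z|^{-1}z^\perp\cdot\nabla_{\mathbb H}-N|z|^{-1}$ shows a genuine $N$-loss, and the estimate $\|\<N\>^{-1}|z|T f\|\le\tfrac{d}{2(d-1)}\|\L^{1/2}f\|$ (Lemma \ref{lemma_XYT}, \eqref{lemma_XYT_2}) is what requires both $d\ge 2$ and the $\<N\>^{-1}$. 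This is also exactly why $\<N\>^{-\mu}$ can be dropped in Theorem \ref{theorem_resolvent_radial} on $\Ker N$, where one uses \eqref{lemma_XYT_4} instead. Your proposal should be revised to account for these three points; as written it would not close for cases (i)--(iii) and takes an unnecessary and incompatible detour for case (iv).
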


More generally, the above results hold for any non-negative self-adjoint operator $\H$ on $L^2(\mathbb H^d)$ with domain $H^{2s}(\mathbb H^d)$ (see Subsection \ref{sub:notation} for the definition of Sobolev space), satisfying the following three conditions:

\begin{enumerate}[label=(H\arabic*)]
	\item\label{H1}
	$\H$ is homogeneous of degree $2s$, in the sense that 
	\begin{equation*}
		\H(f \circ \delta_\lambda) = \lambda^{2s} (\H f) \circ \delta_\lambda, \quad f\in H^{2s}(\Heis^d),\quad \lambda>0,
	\end{equation*}
	or equivalently, using the generator of the dilation group introduced at the beginning of Section\til\ref{section_abstract}, that for all $\tau\in \R$
	\begin{align}
		\label{homogeneous}
		e^{-i\tau A}\H e^{i\tau A}=e^{2s\tau}\H\quad \text{on}\quad H^{2s}(\mathbb H^d). 
	\end{align}
	
	\item\label{H2}
	There exist constants $C_1,C_2>0$ such that $C_1\L^s\le \H\le C_2\L^s$ in the form sense, i.e.,
	$$
	C_1\|\L^{s/2}f\|_{L^2(\mathbb H^d)}^2\le \|\H^{1/2}f\|_{L^2(\mathbb H^d)}^2\le C_2\|\L^{s/2}f\|_{L^2(\mathbb H^d)}^2,\quad f\in H^{s}(\mathbb H^d). 
	$$
	
	\item\label{H3}
	$\H$ strongly commutes with $\L$, in the sense that 
	$$
	[(\H-\sigma)^{-1},(\L-w)^{-1}]=0 \quad\text{for any $\sigma,w\in \C\setminus\R$.}
	$$
\end{enumerate}
These hypothesis will be employed in the proof of the abstract Theorem\til\ref{theorem_abstract} in Section \ref{section_abstract} below, and it can be checked (see Appendix \til\ref{sec:Ls}) that $\L^s$ and $\L_s$ satisfy them.

\begin{remark}
The weight $w_4$ naturally appears in the Hardy type inequality on $\mathbb H^d$ of the form
$$
\|w_4f\|_{L^2(\mathbb H^d)}\le d^{-1}\|\L^{1/2}f\|_{L^2(\mathbb H^d)}
$$
(see Lemma \ref{lemma_Hardy} for more details). Note that the operator $w_4\L^{-1/2}$ is invariant under the scaling \eqref{scaling}. In particular, $w_4$ is critical for the estimate \eqref{theorem_resolvent_general_1} (at least near the zero energy $\sigma=0$) in view of the scaling structure, decay rate at infinity and singularity at the origin. On the other hand, the other weights are somewhat artificial and depend on our method. In fact, one can find from the proof of this theorem, precisely Proposition \ref{proposition_replace} and its proof, that the above specific formulas of $w_1,w_2,w_3$ are not necessary, but the following conditions are sufficient: 
\begin{align*}
w_1 &\le1,\quad w_1|z|\le1, \quad w_1|t|\le1,\quad w_1 \le w_4;  \\
w_2 &\le1,\quad  w_2|z|\le1, \quad w_2|t|\le1;\\
w_3 &\le1,\quad  w_3|z|\le1,\quad  w_3|t|\le |z|,\quad w_3 \le w_4,
\end{align*}
and that $w_3$ is cylindrical.
\end{remark}

\begin{remark}\label{rem:N}
There is a factor $\<N\>^{-\mu}$ in the third and fourth choices of $G$ in Theorem \ref{theorem_resolvent_general}, which describes a kind of spherical derivatives in the $z$-variable. Indeed, $N$ is the form
$
N=\sum_{j=1}^d L_{d+j,j},
$
where $L_{j,k}=z_j\partial_{z_k}-z_k\partial_{z_j}$ is the infinitesimal generator of the pull back $R_{jk}(\theta)^*f(z)=f(R_{jk}(\theta)z)$ by the rotation group
$$
R_{jk}(\theta):\R^{2d}\ni z\mapsto (z_j\cos\theta-z_k\sin\theta)e_j+(z_j\sin\theta+z_k\cos\theta)e_k+\sum_{\ell\neq j,k}z_\ell e_\ell\in \R^{2d}
$$
so that $\Delta_{S^{2d-1}}=\sum_{j<k}L_{j,k}^2$, where $(e_j)_{j=1}^{2d}$ denotes the standard basis of $\R^{2d}$ and $\Delta_{S^{2d-1}}$ denotes the spherical Laplacian on $\R^{2d}$. In particular, we have that $0\le -N^2\le -d\Delta_{S^{2d-1}}$ in the form sense, since
\begin{align*}
	\<-N^2u,u\>_{L^2(\mathbb H^d)}
	\le d\sum_{j=1}^d\|L_{j,d+j}u\|_{L^2(\mathbb H^d)}^2
	\le d\sum_{1\le j<k\le 2d}\|L_{j,k} u\|_{L^2(\mathbb H^d)}^2
	=-d\<\Delta_{S^{2d-1}}u,u\>_{L^2(\mathbb H^d)}
\end{align*}
and that Theorem \ref{theorem_resolvent_general} still holds with $\<N\>^{-\mu}$ replaced by $\<\Delta_{S^{2d-1}}\>^{-\mu/2}$. 
Moreover, $Nf=0$ if $f$ is \emph{polyradial}, namely $f(z,t)=\tilde f(|(x_1,y_1)|,\dots,|(x_d,y_d)|,t)$ with some $\tilde f \colon [0,\infty)^d \times \R\to \C$. A particular case of polyradial functions are the cylindrical or radial functions. When $d=1$, it is easy to check that the cylindrical functions are the only ones in $\Ker N$.
%
\end{remark}

Although it is not clear whether the derivative loss $\<N\>^{-\mu}$ in the above theorem can be removed or not in the general case, this is the case for $f\in \Ker N$ as follows: 

\begin{theorem}[Resolvent estimates on $\Ker N$]
\label{theorem_resolvent_radial}
Let $s>0$ and $\H=\L^s$, or $0< s<d+1$ and $\H=\L_s$.
Suppose $d\ge1$, $1/2<\mu\le1$ and $f\in \S(\mathbb H^d)\cap \Ker N$. Then \eqref{theorem_resolvent_general_1} holds for $G=w_3^\mu\L^{s/2}\<\L\>^{-1/4}$ and $G=w_4^\mu \L^{(s-\mu)/2}$. 
\end{theorem}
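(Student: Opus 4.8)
The plan is to deduce Theorem~\ref{theorem_resolvent_radial} from Theorem~\ref{theorem_resolvent_general} by exploiting the fact that, on $\Ker N$, the operator $\<N\>^{-\mu}$ acts trivially. The key structural input is hypothesis~\ref{H3} together with the commutation relations on $\mathbb H^d$: since $N=z^\perp\cdot\partial_z$ generates the one-parameter rotation group $R_{jk}(\theta)$ described in Remark~\ref{rem:N}, and since $\L=\partial_z^2+4NT+4|z|^2T^2$ is manifestly invariant under simultaneous rotations of the $x$- and $y$-blocks of $z$, the operator $N$ strongly commutes with $\L$, hence with $\H$ (by \ref{H2}, \ref{H3} and functional calculus), and therefore with the resolvent $(\H-\sigma)^{-1}$. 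Consequently $\Ker N$ is a reducing subspace for all the operators in sight: if $f\in\S(\mathbb H^d)\cap\Ker N$, then $(\H-\sigma)^{-1}G_0^*f\in\Ker N$ as well, where $G_0=w_3^\mu\L^{s/2}\<\L\>^{-1/4}$ or $G_0=w_4^\mu\L^{(s-\mu)/2}$ — here one also uses that $w_3$ is cylindrical (as recorded in the Remark following Theorem~\ref{theorem_resolvent_general}) and $w_4$ is polyradial, so multiplication by $w_3$ or $w_4$ preserves $\Ker N$, and that $\L$, $\<\L\>$ do too.

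The concrete steps I would carry out are as follows. First, record that $N$ commutes (strongly, i.e.\ at the level of resolvents or spectral projections) with $\L$, $\H$, and with multiplication by $w_3$ and $w_4$; this is where the bulk of the (routine) verification lives. Second, let $P$ denote the orthogonal projection onto $\Ker N$ in $L^2(\mathbb H^d)$, and note that by the spectral theorem for the self-adjoint operator $-iN$ (or $\Delta_{S^{2d-1}}$, as in Remark~\ref{rem:N}) one has $\<N\>^{-\mu}P=P$, equivalently $\<N\>^{-\mu}$ restricted to $\Ran P$ is the identity. Third, write $G=\<N\>^{-\mu}G_0$ for the choices in case~(iii)/(iv) of Theorem~\ref{theorem_resolvent_general} with the same $G_0$; for $f\in\S\cap\Ker N$ one gets $G^*f=G_0^*\<N\>^{-\mu}f=G_0^*f$ since $G_0^*$ preserves $\Ker N$ and $\<N\>^{-\mu}f=f$, and similarly the outer factor: $G(\H-\sigma)^{-1}G^*f=\<N\>^{-\mu}G_0(\H-\sigma)^{-1}G_0^*f$, and because $(\H-\sigma)^{-1}G_0^*f\in\Ker N$ while $G_0$ maps $\Ker N$ into $\Ker N$, the leftmost $\<N\>^{-\mu}$ is again the identity. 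Hence for such $f$,
\begin{align*}
\|G_0(\H-\sigma)^{-1}G_0^*f\|_{L^2(\mathbb H^d)}
=\|G(\H-\sigma)^{-1}G^*f\|_{L^2(\mathbb H^d)}
\lesssim\|f\|_{L^2(\mathbb H^d)},
\end{align*}
the last bound being exactly Theorem~\ref{theorem_resolvent_general}(iii) or (iv) (valid for $d\ge2$). Fourth and finally, I would treat the remaining range $d\ge1$ — in particular $d=1$, where (iii)/(iv) of Theorem~\ref{theorem_resolvent_general} were not asserted. When $d=1$, Remark~\ref{rem:N} tells us $\Ker N$ consists exactly of the cylindrical functions, and one can either invoke a one-dimensional-in-the-$z$-angle reduction or simply observe that the abstract machinery of Section~\ref{section_abstract} underlying Theorem~\ref{theorem_resolvent_general} applies verbatim on the reducing subspace $\Ker N$ without needing the $\<N\>^{-\mu}$ regularization, since the obstruction that forced its introduction (controlling non-polyradial modes) is absent there.

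The main obstacle I anticipate is the fourth step: making the $d=1$ (and more generally the ``$\Ker N$ with the $\<N\>^{-\mu}$ removed'') case rigorous, since it is not a formal corollary of the stated Theorem~\ref{theorem_resolvent_general} but rather requires re-reading the proof of the abstract theorem restricted to $\Ker N$ and checking that every commutator estimate closes without the spherical-derivative gain — equivalently, that on $\Ker N$ the relevant conjugate-operator positivity and boundedness estimates hold with $G_0$ in place of $\<N\>^{-\mu}G_0$. Once one grants that the weakly-conjugate-operator argument of Section~\ref{section_abstract} is carried out inside the invariant subspace $\Ker N$, the passage $\<N\>^{-\mu}\mapsto\mathrm{Id}$ is immediate, and the $d\ge2$ part is a genuine one-line corollary of Theorem~\ref{theorem_resolvent_general}(iii)–(iv). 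A secondary, purely technical point is justifying the strong commutativity of $N$ with the (possibly non-local, in the $\L_s$ case) operator $\H$ and with $\<\L\>^{-1/4}$; this follows from \ref{H2}–\ref{H3}, the explicit form \eqref{sublaplacian_2} of $\Delta_{\mathbb H}$, and rotation invariance, but should be stated explicitly as a lemma.
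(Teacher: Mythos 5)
Your reduction of the $d\ge 2$ case to Theorem~\ref{theorem_resolvent_general}(iii)--(iv) via $\<N\>^{-\mu}\big|_{\Ker N}=\Id$ is sound, and the paper itself records exactly this in the remark following the statement. But the fourth step — the $d=1$ case — is the whole content of the theorem, and your plan for it misdiagnoses where the $\<N\>^{-\mu}$ factor actually enters. It plays no role in the conjugate-operator machinery: Theorem~\ref{theorem_abstract} is stated and proved for all $d\ge 1$ and makes no reference whatsoever to $N$; its weights are the abstract $\<A_\alpha\>^{-\mu}S_\alpha$. The operator $\<N\>^{-\mu}$ appears only in Proposition~\ref{proposition_replace}, i.e.\ in converting $\<A_\alpha\>^{-\mu}S_\alpha$ into the concrete weights $w_3^\mu\L^{s/2}\<\L\>^{-1/4}$ or $w_4^\mu\L^{(s-\mu)/2}$, and specifically to control the term $w_j\cdot 2tT\cdot(\L+\ep)^{-(1+ib)/2}$ coming from the decomposition $iA=z\cdot\nabla_{\mathbb H}+2tT+d$. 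That term is handled either through \eqref{lemma_XYT_2}, which requires the D'Ambrosio inequality \eqref{lemma_Hardy_2} and hence $d\ge 2$, or — and this is the mechanism the paper actually uses — through the $\Ker N$-specific estimate \eqref{lemma_XYT_4}, $\||z|Tf\|\le\frac12\|\L^{1/2}f\|$, valid for all $d\ge 1$. So there is nothing to "re-read in the abstract theorem restricted to $\Ker N$"; the Mourre-type positivity and boundedness estimates of Section~\ref{section_abstract} never needed modification.

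Concretely, the paper applies Theorem~\ref{theorem_abstract} verbatim, invokes Lemma~\ref{lemma_N} to ensure $\varphi(\L)$ preserves $\Ker N$, and then proves $\Ker N$-analogues of the relevant Proposition~\ref{proposition_replace} estimates,
\begin{equation*}
\|P_N w_3(iA_{1/2}-1)\|\le 4+d^{-1},
\qquad
\|P_N w_4(\L+\ep)^{-(1+ib)/2}(iA-1)\|\le 3+2|b|d^{-1},
\end{equation*}
with $P_N$ the projection onto $\Ker N$, substituting \eqref{lemma_XYT_4} for \eqref{lemma_XYT_2} in the $2tT$-term. This is precisely the missing ingredient you flag as "the main obstacle," and it is both more local and lighter than re-running the abstract weakly-conjugate-operator argument. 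Without carrying it out, the $d=1$ case — and therefore the theorem as stated — is not established by your proposal.
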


Again, Theorem \ref{theorem_resolvent_radial} can be generalized to any non-negative self-adjoint operator $\H$ with domain $D(\H)=H^{2s}(\Heis^{d})$ and satisfying assumptions \ref{H1}, \ref{H2} and \ref{H3}, if however in addition $\H$ leaves $\Ker N$ invariant, in the sense that $(\H-\sigma)^{-1}\Ker N\subset \Ker N$ for all $\sigma\in \C\setminus\R$.

\begin{remark}
	Theorem \ref{theorem_resolvent_radial} is not a direct corollary of Theorem \ref{theorem_resolvent_general} since it holds for all $d\ge1$, while the results for $d\ge2$ follows from Theorem \ref{theorem_resolvent_general} (with a possible change of the implicit constants in \eqref{theorem_resolvent_general_1}). 
The difference in the dimensional hypothesis arise from the employment, in the proof of Theorem\til\ref{theorem_resolvent_general}, of the D'Ambrosio-type Hardy inequality \eqref{lemma_Hardy_2}.
\end{remark}


\begin{remark}\label{rmk:cfFMRS23}
Theorem \ref{theorem_resolvent_general} with $s=1$ and $G=w_1$ or $w_2$,  and 
Theorem \ref{theorem_resolvent_radial} with $s=\mu=1$ and $G=w_4$ should be compared with \cite[Theorem 1]{FMRS} by the authors. Indeed, considering a \emph{radial} weak solution $u \in H^1(\mathbb{H}^d)$ of the Helmholtz equation $\L u - (\sigma_1+i\sigma_2)u = f$, and setting 
	\begin{equation*}
		u^- (z,t)
		:=
		e^{- i \psi(z,t)} u(z,t),
		\quad
		\psi(z,t)
		:=
		\sqrt{\frac{d}{2}} \frac{\Gamma(\frac{d}{2})}{\Gamma(\frac{d+1}{2})} \sgn(\sigma_2) \sqrt{|\sigma_1|} 
		\Kor,
	\end{equation*}
where $\Gamma(\cdot)$ is the Gamma function, 
	in \cite{FMRS} we have proved via the multipliers method that
	\begin{equation*}
		\begin{aligned}
		\norm{\grad u}_{L^2({\mathbb H}^d)} &\lesssim  \norm{ w_4^{-1} f}_{L^2({\mathbb H}^d)}
		&& \text{if $|\sigma_2|>\delta\sigma_1$,}
		\\
		\norm{\grad u^-}_{L^2({\mathbb H}^d)} &\lesssim  \norm{ w_4^{-1} f}_{L^2({\mathbb H}^d)}
		&& \text{if $|\sigma_2|\le\delta\sigma_1$,}
		\end{aligned}
	\end{equation*}
	for any $\delta>0$.
	These are Barcel\'{o}--Vega--Zubeldia-type estimates (see \cite{BVZ}), stronger than the Kato--Yajima-type estimate (see \cite{KY})
	\begin{align*}
		\sup_{\sigma\in \C\setminus\R}\| w_4 (\L-\sigma)^{-1} w_4 f\|_{L^2(\mathbb H^d)}\lesssim \|f\|_{L^2(\mathbb H^d)},\quad f\in \S(\mathbb H^d),
	\end{align*}
	which corresponds to \eqref{theorem_resolvent_general_1}, and follows from the first ones employing Hardy's inequality \eqref{lemma_Hardy_1}. However, no symmetric condition is needed in Theorem \ref{theorem_resolvent_general} and the symmetric condition in Theorem\til\ref{theorem_resolvent_radial} is less restrictive compared to the one in \cite{FMRS}: if $u$ is radial, then $f\in\Ker N$. 
\end{remark}

	\begin{remark}
		\label{remark_operator_norm}
		Although it is far from being optimal, our proof gives some explicit upper bounds of the implicit constants in \eqref{theorem_resolvent_general_1} for all the above choices of $G$ in Theorems \ref{theorem_resolvent_general} and \ref{theorem_resolvent_radial}. 
		For instance, 
		\begin{align*}
			&\|w_1(\L-\sigma)^{-1}w_1\|\le 2(3e^{1/4}-2)^2 (5+3d^{-1})^2,\\
			&\|w_2(\L-\sigma)^{-1}w_2\|\le 2(3e^{1/4}-2)^2 (5+4(d-1)^{-1})^2,\\
			&\|\<N\>^{-1} w_3 \L^{1/2}\<\L\>^{-1/4}(\L-\sigma)^{-1} \<\L\>^{-1/4} \L^{1/2} w_3 \<N\>^{-1}\|
			\le 2(3e^{1/4}-2)^2 (4+d^{-1}+(d-1)^{-1})^2,\\
			&\| \<N\>^{-1} w_4 (\L-\sigma)^{-1} w_4 \<N\>^{-1} \| \le 2(3e^{1/4}-2)^2 (3+(d-1)^{-1}).
		\end{align*}
		We refer to Theorem \ref{theorem_abstract}, Remark \ref{remark_kappa}, Proposition \ref{proposition_replace}, and the proof of Theorem \ref{theorem_resolvent_general} below for more details. 
	\end{remark}

\subsection{Smoothing effect and spectral stability}\label{sec:applications}

In this subsection we prove two typical applications of the above uniform resolvent estimates. 

\subsubsection{Smoothing estimates}
By virtue of the theory of smooth perturbations, the above theorems immediately imply associated smoothing effects for the solution to the Schr\"odinger-type equation
\begin{align}
\label{SE}
(i\partial_\tau-\H)u(\tau,z,t)=F(\tau,z,t),\quad u(0,z,t)=u_0(z,t),\quad \tau\in \R,\ (z,t)\in \mathbb H^d,
\end{align}
with given data $u_0$ and $F$, where the solution is given by the Duhamel formula: 
$$
u(z,t)=e^{-i\tau\H}u_0(z,t)-i\int_0^\tau e^{-i(\tau-r)\H}F(r,z,t)dr. 
$$

\begin{theorem}[Smoothing estimates for general data]
\label{theorem_smoothing_general}
Under the same conditions in Theorem \ref{theorem_resolvent_general}, one has for $u_0\in \S(\mathbb H^d)$ and $F\in \S(\R\times \mathbb H^d)$, 
\begin{align}
\label{theorem_smoothing_general_1}
\|Ge^{-i\tau \H}u_0\|_{L^2(\R\times\mathbb H^d)}&\lesssim \|u_0\|_{L^2(\mathbb H^d)},\\
\label{theorem_smoothing_general_2}
\left\|G\int_0^\tau e^{-i(\tau-r)\H}G^*F(r)dr\right\|_{L^2(\R\times\mathbb H^d)}&\lesssim \|F\|_{L^2(\R\times\mathbb H^d)}.
\end{align}
\end{theorem}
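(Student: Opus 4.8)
The plan is to deduce both \eqref{theorem_smoothing_general_1} and \eqref{theorem_smoothing_general_2} directly from the uniform resolvent bound \eqref{theorem_resolvent_general_1} of Theorem~\ref{theorem_resolvent_general} via Kato's theory of smooth perturbations \cite{Kato} (see also \cite{RS}). Recall that a densely defined closed operator $G$ on $L^2(\mathbb H^d)$ is said to be \emph{$\H$-smooth} if
\[
\|G\|_{\H}^2:=\frac1\pi\sup_{\sigma\in\C\setminus\R}\bigl\|G\,\Im(\H-\sigma)^{-1}G^*\bigr\|<\infty,
\]
and that for such $G$ the operator $Ge^{-i\tau\H}$ is bounded from $L^2(\mathbb H^d)$ to $L^2(\R\times\mathbb H^d)$, with $\|Ge^{-i\tau\H}u_0\|_{L^2(\R\times\mathbb H^d)}\le(2\pi)^{1/2}\|G\|_{\H}\,\|u_0\|_{L^2(\mathbb H^d)}$. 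Since $\Im(\H-\sigma)^{-1}=\tfrac1{2i}\bigl[(\H-\sigma)^{-1}-(\H-\bar\sigma)^{-1}\bigr]$, estimate \eqref{theorem_resolvent_general_1} yields at once $\|G\|_{\H}^2\le\tfrac1\pi\sup_{\sigma\in\C\setminus\R}\|G(\H-\sigma)^{-1}G^*\|<\infty$, so $G$ is $\H$-smooth and \eqref{theorem_smoothing_general_1} follows, with implicit constant controlled by the one in \eqref{theorem_resolvent_general_1}.

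Before invoking this abstract machinery one must check that, in each of the cases (i)--(iv), $G$ genuinely defines a densely defined closed operator with $\S(\mathbb H^d)\subset D(G)\cap D(G^*)$, so that both sides of \eqref{theorem_smoothing_general_1}--\eqref{theorem_smoothing_general_2} are meaningful. This is routine bookkeeping: $G$ is the composition of multiplication by the bounded weight $w_j$ ($j=1,2,3$), respectively the $\L^{1/2}$-bounded multiplication by $w_4$ provided by Hardy's inequality (Lemma~\ref{lemma_Hardy}), with bounded or nonnegative self-adjoint functions of the strongly commuting operators $\L$ and $N$, all of which leave $\S(\mathbb H^d)$ in their domains; moreover $\H=\L^s$ or $\L_s$ is a non-negative self-adjoint operator on $L^2(\mathbb H^d)$ (see Appendix~\ref{sec:Ls}).

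For the inhomogeneous estimate \eqref{theorem_smoothing_general_2} we use that $\H$-smoothness of $G$ also controls the \emph{retarded} Duhamel operator: if $G$ is $\H$-smooth, then $F\mapsto G\int_0^\tau e^{-i(\tau-r)\H}G^*F(r)\,dr$ is bounded on $L^2(\R\times\mathbb H^d)$ with operator norm $\lesssim\|G\|_{\H}^2$, hence controlled by \eqref{theorem_resolvent_general_1}. For the non-retarded analogue this is immediate from Plancherel in the time variable, which conjugates $F\mapsto G\int_{\R}e^{-i(\tau-r)\H}G^*F(r)\,dr$ into pointwise multiplication, in the dual time variable, by an operator whose norm is $\lesssim\sup_{\sigma\in\C\setminus\R}\|G(\H-\sigma)^{-1}G^*\|$; the passage from this to the truncated integral $\int_0^\tau$ is the classical argument of \cite{Kato} (see also \cite{RS}). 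Applying this with the operator $G$ furnished by Theorem~\ref{theorem_resolvent_general} yields \eqref{theorem_smoothing_general_2}, completing the proof.

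The only point requiring some care is therefore the functional-analytic bookkeeping of the second paragraph; there is no genuine further obstacle, since the whole analytic content of the statement is carried by the uniform resolvent estimate \eqref{theorem_resolvent_general_1} already established in Theorem~\ref{theorem_resolvent_general}.
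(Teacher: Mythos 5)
Your proposal follows essentially the same route as the paper, namely invoking Kato's theory of smooth perturbations (and the Plancherel argument in the time variable) to pass from the uniform resolvent bound \eqref{theorem_resolvent_general_1} to the smoothing estimates, so the core idea is right. One technical misstatement, though: you claim that $\H$\emph{-smoothness} of $G$ (i.e., finiteness of $\|G\|_{\H}$, which controls only $G\,\Im(\H-\sigma)^{-1}G^*$) already yields the retarded Duhamel bound \eqref{theorem_smoothing_general_2} with norm $\lesssim\|G\|_{\H}^2$. That implication is false in general: $\H$-smoothness is equivalent to the \emph{homogeneous} estimate \eqref{theorem_smoothing_general_1} only, whereas \eqref{theorem_smoothing_general_2} is equivalent to the stronger \emph{supersmoothness} bound $\sup_{\sigma}\|G(\H-\sigma)^{-1}G^*\|<\infty$, exactly as the paper records by citing \cite{Kato,DAncona}. (The usual $TT^*$ plus Christ--Kiselev route from smoothness to a truncated Duhamel integral breaks down at the $L^2_t\to L^2_t$ endpoint, so one really does need the full resolvent bound.) Since Theorem~\ref{theorem_resolvent_general} supplies precisely that supersmoothness bound, your final conclusion is correct; only the intermediate sentence misattributes the strength of hypothesis required, and the remark should be rephrased to invoke \eqref{theorem_resolvent_general_1} directly rather than $\|G\|_{\H}$.
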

Indeed, for any densely defined closed operator $G$ on $L^2(\mathbb H^d)$ with domain inclusion $D(\H)\subset D(G)$, it is known (see \cite{Kato,DAncona}) that \eqref{theorem_smoothing_general_2} is equivalent to \eqref{theorem_resolvent_general_1}, while \eqref{theorem_smoothing_general_1} is equivalent to 
$$
\sup_{\sigma\in \C\setminus\R}\|G\{(\H-\sigma)^{-1}-(\H-\overline{\sigma})^{-1}\}G^*f\|_{L^2(\mathbb H^d)}\lesssim \|f\|_{L^2(\mathbb H^d)}, 
$$
which clearly follows from \eqref{theorem_resolvent_general_1}. 
Similarly, we have also the following

\begin{theorem}[Smoothing estimates for data in $\Ker N$]
\label{theorem_smoothing_radial}
Let $d\ge1$, $s>0$, $1/2<\mu\le1$, $u_0\in \mathcal S(\mathbb H^d)\cap \Ker N$ and $F\in \mathcal S(\R\times \mathbb H^d)\cap L^1_{\loc}(\R;\Ker N)$. Then \eqref{theorem_smoothing_general_1} and \eqref{theorem_smoothing_general_2} hold for $G=w_3^\mu\L^{s/2}\<\L\>^{-1/4}$ and $G=w_4^\mu \L^{(s-\mu)/2}$. 
\end{theorem}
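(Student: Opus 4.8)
The plan is to derive Theorem~\ref{theorem_smoothing_radial} from Theorem~\ref{theorem_resolvent_radial} by Kato's theory of smooth perturbations --- exactly the mechanism recalled in the paragraph after Theorem~\ref{theorem_smoothing_general} --- but carried out on the closed subspace $\Ker N\subset L^2(\mathbb H^d)$ rather than on the whole space. Since $N$ generates a unitary rotation group in the $z$-variable, it is skew-adjoint, so $\mathfrak{h}:=\Ker N$ is a closed subspace of $L^2(\mathbb H^d)$ with orthogonal projection $P_N$, and $\|\cdot\|_{\mathfrak{h}}$ is just $\|\cdot\|_{L^2(\mathbb H^d)}$ restricted to $\mathfrak{h}$. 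By the extra hypothesis in force in Theorem~\ref{theorem_resolvent_radial} (immediate for $\H=\L^s$ because $N$ commutes with $\L$, and checked in Appendix~\ref{sec:Ls} for $\H=\L_s$), both $(\H-\sigma)^{-1}$ and its adjoint $(\H-\overline\sigma)^{-1}$ leave $\Ker N$ invariant for all $\sigma\in\C\setminus\R$; hence $P_N$ commutes with $(\H-\sigma)^{-1}$, $\Ker N$ reduces $\H$, and $\H$ restricts to a non-negative self-adjoint operator $\H_N$ on $\mathfrak{h}$ with $(\H-\sigma)^{-1}|_{\mathfrak{h}}=(\H_N-\sigma)^{-1}$ and $e^{-i\tau\H}|_{\mathfrak{h}}=e^{-i\tau\H_N}$. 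The same applies to $\L$.

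Next I would check that the observables restrict compatibly. The weights $w_3$ and $w_4$ are cylindrical, hence rotation-invariant, so multiplication by $w_3^\mu$ or $w_4^\mu$ commutes with $P_N$, and therefore each $G\in\{\,w_3^\mu\L^{s/2}\<\L\>^{-1/4},\ w_4^\mu\L^{(s-\mu)/2}\,\}$ maps $\mathfrak{h}$ into itself and restricts to a densely defined closed operator $G_N$ on $\mathfrak{h}$. The domain inclusion $D(\H_N)\subset D(G_N)$ needed to run Kato's argument reduces to the relative bound $\|Gf\|_{L^2(\mathbb H^d)}\lesssim\|\<\L\>^s f\|_{L^2(\mathbb H^d)}$ on $H^{2s}(\mathbb H^d)=D(\H)$, which follows from the Hardy inequality of Lemma~\ref{lemma_Hardy} together with complex interpolation in $\mu$, exactly as in the proof of Theorem~\ref{theorem_resolvent_general}.

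Then Theorem~\ref{theorem_resolvent_radial}, after extending from $\S(\mathbb H^d)\cap\Ker N$ to all of $\mathfrak{h}$ by density, says precisely that
\[
\sup_{\sigma\in\C\setminus\R}\|G_N(\H_N-\sigma)^{-1}G_N^*\|_{\mathfrak{h}\to\mathfrak{h}}<\infty,
\]
where $G_N^*=P_N G^*$ is the adjoint formed within $\mathfrak{h}$. Applying the equivalences recalled after Theorem~\ref{theorem_smoothing_general} verbatim on the Hilbert space $\mathfrak{h}$ with the self-adjoint operator $\H_N$, this resolvent bound is equivalent to
\[
\Big\|G_N\int_0^\tau e^{-i(\tau-r)\H_N}G_N^*F(r)\,dr\Big\|_{L^2(\R;\mathfrak{h})}\lesssim\|F\|_{L^2(\R;\mathfrak{h})},
\]
and, combined with its $\overline\sigma$-counterpart via the triangle inequality, implies $\|G_N e^{-i\tau\H_N}u_0\|_{L^2(\R;\mathfrak{h})}\lesssim\|u_0\|_{\mathfrak{h}}$. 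Finally, for $u_0\in\mathcal S(\mathbb H^d)\cap\Ker N$ and $F\in\mathcal S(\R\times\mathbb H^d)\cap L^1_{\loc}(\R;\Ker N)$ one has $u_0\in\mathfrak{h}$, $F(r)\in\mathfrak{h}$ for a.e.\ $r$, $e^{-i\tau\H_N}u_0=e^{-i\tau\H}u_0$, and the Duhamel integral --- a Bochner integral of $\mathfrak{h}$-valued functions --- stays in $\mathfrak{h}$ and coincides with the one in \eqref{theorem_smoothing_general_2}; so the two displays above are exactly \eqref{theorem_smoothing_general_1} and \eqref{theorem_smoothing_general_2}.

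The only delicate point --- and the main, though mild, obstacle --- is this reduction to the invariant subspace: one has to be careful that $\H$, $\L$ and the weights genuinely leave $\Ker N$ invariant, so that $\H_N$ is self-adjoint on $\mathfrak{h}$ and Kato's abstract machinery transfers without any change. Once this bookkeeping is done, Theorem~\ref{theorem_smoothing_radial} is a black-box consequence of Theorem~\ref{theorem_resolvent_radial}, and the fact that it holds for all $d\ge1$ (rather than $d\ge2$ for some of the weights in Theorem~\ref{theorem_smoothing_general}, a restriction coming from the D'Ambrosio-type inequality \eqref{lemma_Hardy_2}) is simply inherited from Theorem~\ref{theorem_resolvent_radial}.
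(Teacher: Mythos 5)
Your argument is correct and is exactly the route the paper has in mind: the paper disposes of Theorem~\ref{theorem_smoothing_radial} with a single ``Similarly,'' appealing to the same Kato-smoothness equivalence recalled after Theorem~\ref{theorem_smoothing_general}, applied on the reducing subspace $\Ker N$ with the resolvent bound supplied by Theorem~\ref{theorem_resolvent_radial}. You have simply made the bookkeeping explicit --- that $N$ is skew-adjoint so $\Ker N$ is a closed reducing subspace, that $\H$, $\L$ and the cylindrical weights $w_3$, $w_4$ commute with $P_N$ so $G$ restricts, and that the Duhamel integral stays in $\Ker N$ --- which is precisely what makes the ``Similarly'' licit.
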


\subsubsection{Spectral stability}
In the last decades, Spectral analysis of non-self-adjoint operators has been extensively studied (see e.g. \cite{CFK,CK,FKV,FKV2,HK} and references therein). A natural and typical question in this context is the spectral stability, namely under what conditions on the perturbations are the spectrum of perturbed operators  the same as that of the unperturbed one? It is worth mentioning that, in contrast with the self-adjoint case, many classical tools such as the comparison principle and the spectral decomposition theorem are not available in the non-self-adjoint setting in general. Instead, suitable resolvent estimates (combined with the Birman--Schwinger principle) have been shown to be very useful to prove the spectral stability or certain spectral radius bounds for eigenvalues (see \cite{Frank}). 

Our theorem can be also used to show the spectral stability for the Schr\"odinger operator $\L+V$ with a complex-valued potential $V(z,t)$ as follows. Recall that $\L$ is the positive self-adjoint operator on $L^2(\mathbb{H}^d)$ associated to the quadratic~form
$$
q_0[\psi]:=\int_{\mathbb{H}^d}|\nabla_{\!\mathbb{H}}\psi|^2dzdt,
\qquad
D(q_0)=H^1(\mathbb{H}^d),
$$
with purely continuous spectrum $[0,\infty)$. 
Let $V:\mathbb{H}^d\to\C$ be a measurable function subordinated to $\L$, with bound less than one, viz. there exists $0\le a<1$ such that
\begin{equation}\label{eq:subordination}
	\int_{\mathbb{H}^d}|V||\psi|^2dzdt\leq a\int_{\mathbb{H}^d}|\nabla_{\!\mathbb{H}}\psi|^2dzdt,
\end{equation}
for all $\psi\in H^1(\mathbb{H}^d)$. Then the quadratic form
$$
q_V[\psi]:=\int_{\mathbb{H}^d}V|\psi|^2dzdt,
\quad
D(q_V):=\Big\{\psi\in L^2(\mathbb{H}^d)\ |\ \int_{\mathbb{H}^d}|V||\psi|^2dzdt<\infty\Big\}
$$
is relatively bounded with respect to $q_0$, with bound less than 1, and we can define an \mbox{$m$-sectorial} differential operator $\L+V$ associated to the closed quadratic form $q:=q_0+q_V$. 

\begin{theorem}[Spectral stability]
\label{theorem_spectral}
Suppose that $V:\mathbb H^d\to \C$ is a measurable function satisfying \eqref{eq:subordination}, and that $V$ and $d$ satisfy one of the following conditions: 
\begin{enumerate}[label=(\roman*)]
	\item $d\ge1$ and $|V|\le C_1 w_1^{2}$ almost everywhere, with some 
	$$
	C_1 < \frac12 (3e^{1/4}-2)^{-2} (5+3d^{-1})^{-2} ,
	$$
	\item $d\ge2$ and $|V|\le C_2 w_2^{2}$ almost everywhere, with some 
	$$
	C_2 < \frac12 (3e^{1/4}-2)^{-2} (5+4(d-1)^{-1})^{-2}.
	$$
\end{enumerate}
Then $\sigma(\L+V)=[0,\infty)$ and $\sigma_{\mathrm{p}}(\L+V)=\varnothing$. 
\end{theorem}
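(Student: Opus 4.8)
The plan is to combine the uniform resolvent estimate of Theorem~\ref{theorem_resolvent_general} (specialised to $\H=\L$, i.e.\ $s=1$, with $G=w_1$ or $G=w_2$, together with the explicit operator–norm bounds recorded in Remark~\ref{remark_operator_norm}) with the Birman--Schwinger principle. First I would note that, since $w_1,w_2\le1$ on $\mathbb H^d$, the pointwise bound $|V|\le C_jw_j^2$ forces $V\in L^\infty(\mathbb H^d)$; hence $\L+V$ is a bounded perturbation of $\L$ with $D(\L+V)=D(\L)$, which makes the Birman--Schwinger machinery below entirely elementary to justify. I factor $V=v_1v_2$ with $v_1:=\sgn(V)|V|^{1/2}$ and $v_2:=|V|^{1/2}$, so that $|v_1|=|v_2|=|V|^{1/2}\le C_j^{1/2}w_j$ pointwise; writing $v_1=w_j\,m_1$ and $v_2=m_2\,w_j$ with $\|m_1\|_\infty,\|m_2\|_\infty\le C_j^{1/2}$ (the multiplication operators $m_i$ being well defined since $v_i$ vanishes wherever $w_j$ does) and invoking Theorem~\ref{theorem_resolvent_general} together with Remark~\ref{remark_operator_norm}, one obtains, for every $z\in\C\setminus\R$,
\[
\|v_2(\L-z)^{-1}v_1\|=\|m_2\bigl(w_j(\L-z)^{-1}w_j\bigr)m_1\|\le C_j\,\|w_j(\L-z)^{-1}w_j\|\le C_jM_j,
\]
where $M_1=2(3e^{1/4}-2)^2(5+3d^{-1})^2$ and $M_2=2(3e^{1/4}-2)^2(5+4(d-1)^{-1})^2$; the standing hypothesis on $C_j$ is precisely $\kappa:=C_jM_j<1$. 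Since $(\L-z)^{-1}$ is holomorphic on $\C\setminus[0,\infty)\supset(-\infty,0)$, this bound extends by continuity to all $z\in\C\setminus[0,\infty)$.

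Next I would rule out spectrum off $[0,\infty)$. For $z\in\C\setminus[0,\infty)$ the operator $I+v_2(\L-z)^{-1}v_1$ is invertible on $L^2(\mathbb H^d)$ by a Neumann series, and the Birman--Schwinger resolvent identity
\[
(\L+V-z)^{-1}=(\L-z)^{-1}-(\L-z)^{-1}v_1\bigl(I+v_2(\L-z)^{-1}v_1\bigr)^{-1}v_2(\L-z)^{-1},
\]
which one checks directly to invert $\L+V-z$ (using $V=v_1v_2$ and $D(\L+V)=D(\L)$), exhibits $(\L+V-z)^{-1}$ as a bounded operator. Hence $\C\setminus[0,\infty)\subset\rho(\L+V)$, that is $\sigma(\L+V)\subset[0,\infty)$.

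To exclude eigenvalues, the only remaining case is $\sigma\in[0,\infty)$. Suppose $(\L+V)\psi=\sigma\psi$ with $\psi\in D(\L)\setminus\{0\}$. For $\ep>0$, rewriting $(\L-\sigma-i\ep)\psi=-v_1(v_2\psi)-i\ep\psi$, applying $(\L-\sigma-i\ep)^{-1}$ and then $v_2$, the function $\phi:=v_2\psi\in L^2$ satisfies $(I+K_\ep)\phi=-i\ep\,v_2(\L-\sigma-i\ep)^{-1}\psi$, where $K_\ep:=v_2(\L-\sigma-i\ep)^{-1}v_1$ has $\|K_\ep\|\le\kappa<1$ uniformly in $\ep>0$ by the estimate above. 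As $\ep\to0^+$ the right-hand side converges to $v_2\,E_\L(\{\sigma\})\psi=0$, because the absence of point spectrum of $\L$ gives $(-i\ep)(\L-\sigma-i\ep)^{-1}\to E_\L(\{\sigma\})=0$ strongly. Since $\|(I+K_\ep)^{-1}\|\le(1-\kappa)^{-1}$ and $\phi$ does not depend on $\ep$, it follows that $\phi=0$; hence $V\psi=v_1\phi=0$ and $(\L-\sigma)\psi=0$, contradicting $\sigma_{\mathrm p}(\L)=\varnothing$. Therefore $\sigma_{\mathrm p}(\L+V)=\varnothing$.

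Finally, for the reverse inclusion $[0,\infty)\subset\sigma(\L+V)$ I would use stability of the essential spectrum: since $w_j$ vanishes at infinity, $V$ is bounded and vanishes at infinity, so $V(\L+1)^{-1}$ is compact --- approximate it in operator norm by $V\chi_R(\L+1)^{-1}$, which is compact because $D(\L)$ embeds compactly into $L^2_{\loc}(\mathbb H^d)$ by the subelliptic Rellich theorem --- whence $(\L+V+1)^{-1}-(\L+1)^{-1}$ is compact and $\sigma_{\mathrm{ess}}(\L+V)=\sigma_{\mathrm{ess}}(\L)=[0,\infty)$. Combined with the second step this gives $\sigma(\L+V)=[0,\infty)$. (Alternatively, $[0,\infty)\subset\sigma(\L+V)$ can be obtained by hand, constructing Weyl sequences for $\L$ at each $\lambda\ge0$ supported far from the origin, using the left-invariance of $\L$, on which $V$ is negligible.) The steps I expect to be most delicate are the rigorous justification of the Birman--Schwinger identity (routine here since $V\in L^\infty$ and $D(\L+V)=D(\L)$) and the essential-spectrum stability in the sub-Riemannian setting, which rests on subelliptic Rellich-type compactness; notice that the $\ep\to0^+$ absorption argument in the eigenvalue step deliberately avoids any limiting absorption principle with pointwise boundary values, using only the uniform bound $\sup_{z\in\C\setminus\R}\|w_j(\L-z)^{-1}w_j\|\le M_j$ already contained in Theorem~\ref{theorem_resolvent_general}.
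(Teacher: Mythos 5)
Your proof is correct, but it takes a genuinely different route from the paper's. The paper factors $V=B^*A$ exactly as you do, then immediately cites the abstract Birman--Schwinger principle of Hansmann--Krej\v{c}i\v{r}\'{i}k \cite[Theorem 3]{HK}: once one verifies that $A\jap{\L}^{-1/2}$ and $B\jap{\L}^{-1/2}$ are bounded (trivial here since $w_j\le 1$) and that $\sup_{\sigma\in\C\setminus[0,\infty)}\|A(\L-\sigma)^{-1}B^*\|<1$ (which follows from Theorem~\ref{theorem_resolvent_general}, Remark~\ref{remark_operator_norm}, and continuity of the resolvent on $\C\setminus[0,\infty)$), that abstract theorem delivers both $\sigma(\L+V)=\sigma(\L)$ and $\sigma_{\mathrm p}(\L+V)=\varnothing$ in one stroke. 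You instead prove the Birman--Schwinger conclusion from scratch: the Neumann-series inversion and the second resolvent identity for $\sigma(\L+V)\subset[0,\infty)$, the $\ep\searrow 0$ absorption argument with $-i\ep(\L-\sigma-i\ep)^{-1}\to E_\L(\{\sigma\})=0$ for $\sigma_{\mathrm p}(\L+V)=\varnothing$, and a Weyl-stability argument via compactness of $V(\L+1)^{-1}$ for the reverse inclusion $[0,\infty)\subset\sigma(\L+V)$. Both are correct, but there are two trade-offs worth flagging. First, the paper's black-box route avoids the need for a subelliptic Rellich compactness theorem (compact embedding of $D(\L)=H^2(\mathbb H^d)$ into $L^2_{\loc}$), which you invoke without proof and which, while true, is the least routine ingredient of your argument; the HK principle sidesteps this by purely operator-theoretic means. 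Second, your observation that $V\in L^\infty$ (since $w_j\le1$), so that $\L+V$ is a genuine operator sum with $D(\L+V)=D(\L)$, is a nice simplification that legitimises your elementary treatment of the Birman--Schwinger identity -- the paper works with the form-sum $m$-sectorial operator and lets HK handle that generality. Your absorption argument for ruling out embedded eigenvalues correctly uses only the uniform bound from Theorem~\ref{theorem_resolvent_general} (no limiting absorption to the boundary), which matches the spirit of the paper. In short: correct, more explicit and self-contained, at the cost of one extra analytic input (subelliptic Rellich) that the paper's citation-based proof does not require.
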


\begin{proof}
We write $V=B^*A$ with $A=\sqrt{|V|}$ and $B^*=\sgn(V)\sqrt{|V|}$, and consider the Birman--Schwinger operator $A(\L-\sigma)^{-1}B^*$ with $\sigma\in \C\setminus\R$. By the Birman--Schwinger principle (see \cite[Theorem 3]{HK} and also \cite{Kato,KK}), the claim of the theorem follows if $A\<\L\>^{-1/2}$ and $B\<\L\>^{-1/2}$ are bounded on $L^2(\mathbb H^d)$ and 
\begin{align}
\label{theorem_spectral_proof_1}
\sup_{\sigma\in \C\setminus[0,\infty)}\|A(\L-\sigma)^{-1}B^*\|_{\mathbb B(L^2(\mathbb H^d))}<1. 
\end{align}
The former condition is trivial since $w_1\le1$ and $w_2\le1$ on $\mathbb H^d$. The assumption in this theorem, Theorem \ref{theorem_resolvent_general} and Remark \ref{remark_operator_norm} imply $\sup_{\sigma\in \C\setminus\R}\|A(\L-\sigma)^{-1}B^*\|_{\mathbb B(L^2(\mathbb H^d))}\le c$ with some $c<1$. Since the map $\C\setminus[0,\infty) \ni \sigma \mapsto A(\L-\sigma)^{-1}B^*\in \mathbb B(L^2(\mathbb H^d))$ is continuous, we thus obtain \eqref{theorem_spectral_proof_1}. 
\end{proof}

\begin{remark}
	Assuming the subordination condition \eqref{eq:subordination} in Theorem\til\ref{theorem_spectral} is actually redundant. Indeed, condition (i) with the Hardy-type inequality \eqref{lemma_Hardy_1} imply \eqref{eq:subordination} with
	$a=d^{-2} C_1$, 
	whereas 
	condition (ii) in the theorem together with the Hardy-type inequality \eqref{lemma_Hardy_2} imply \eqref{eq:subordination} with
	$a=(d-1)^{-2}C_2$.
	It is easy to see that in both cases one has $a<1$.
\end{remark}

\begin{remark}
We cannot consider an analogous result to Theorem \ref{theorem_spectral} for the fractional versions $\mathcal{L}_s$ and $\mathcal{L}^s$.  Indeed, observe that when $s=1$ in  the two first conditions of Theorem  \ref{theorem_resolvent_general}, the weight $G$ boils down to either $w_1$ (in the case $d\ge 1$) or $w_2$ (in the case $d\ge2$). Nevertheless, if $1/2<s<1$, the weight $G$ is an operator and it is not possible to impose a suitable assumption for $V$ on Theorem \ref{theorem_spectral}.
\end{remark}

\subsection{Known results}
\label{subsection_known}
We here recall some known results and compare our theorems with them. In \cite{BBG}, Bahouri, Barilari and Gallagher proved following Strichartz-type estimates: 
$$
\|e^{-i\tau \mathcal L}f\|_{L^\infty_tL^p_\tau L^q_z}\lesssim \|f\|_{L^2}
$$
for any $2\le p,q\le \infty$, $2/p+2d/q=d+1$ and cylindrical $f\in L^2(\mathbb H^d)$. This estimate with the choice  $p=q=2$ and H\"older's inequality particularly imply
$$
\|w(t) e^{-i\tau \mathcal L}f\|_{L^2(\R\times \mathbb H^d)}\lesssim \|w\|_{L^2(\R)}\|f\|_{L^2(\mathbb H^d)}
$$
for any cylindrical $f\in L^2(\mathbb H^d)$ and $w\in L^2(\R)$. In \cite{Mantoiu}, M\u{a}ntoiu proved several space-time estimates with loss of derivatives for the sublaplacians on stratified Lie groups, which, in the setting of Heisenberg groups, particularly implies, for $1/2<s\le1$, 
$$
\|\<|(z,t)|_{\mathbb H}\>^{-s}|(z,t)|_{\mathbb H}^{-s}\<\mathcal L\>^{-s/2}\mathcal L^{(1-s)/2}e^{-i\tau \mathcal L}f\|_{L^2(\R\times \mathbb H^d)}\lesssim\|f\|_{L^2(\mathbb H^d)}. 
$$
Note that the term $\<\mathcal L\>^{-s/2}\mathcal L^{(1-s)/2}$ represents the derivative losses if $s>1/2$. 
Compared with these two results, the interesting point of our theorems is that, on one hand, we obtain uniform estimates \textit{without any symmetry or derivative loss} ((i) and (ii) in Theorems~\ref{theorem_resolvent_general} and \ref{theorem_smoothing_general}), and, on the other hand, full derivative losses with respect to $\mathcal L$ are not necessary and spherical ones are enough ((iii) and (iv) in Theorems \ref{theorem_resolvent_general} and \ref{theorem_smoothing_general}). In particular, we recover a similar uniform resolvent estimates and local smoothing effects to the Euclidean case under cylindrical symmetry (Theorems \ref{theorem_resolvent_radial} and \ref{theorem_smoothing_radial}). 

\subsection{Organization of the paper}
The rest of the paper is devoted to the proof of Theorems \ref{theorem_resolvent_general} and \ref{theorem_resolvent_radial}. The main tool, and key ingredient in this paper, is an abstract uniform resolvent estimate, Theorem \ref{theorem_abstract}, which is proved in Section~\ref{section_abstract} via the method of weakly conjugate operators due to Boutet de Monvel--M\u{a}ntoiu \cite{BM} and Hoshiro \cite{Hoshiro}, see also \cite{Richard}. The latter can be regarded as a variation of Mourre's conjugate operators method. From these abstract estimates, we will deduce, in Section~\ref{sec:proof_thm}, our main results after proving the uniform boundedness of some weighted operators; Hardy inequalities on the Heisenberg group will be also used. In Appendix A, we briefly recall the definition and basic notions of the fractional sublaplacians $\mathcal L^s$ and $\mathcal L_s$.  In Appendix B, we record basic notions of the joint spectral theory for two commuting self-adjoint operators, used in the proof.

\subsection{Notation} 
\label{sub:notation}
We list here some notations we will use throughout the paper:
\begin{itemize}
\item $\S(\mathbb H^d)=\S(\R^{2d+1})$ denotes the standard Schwartz class on $\R^{2d+1}$.


\item $H^s=H^{s}(\mathbb H^d)=\<\L\>^{-s/2}L^2(\mathbb H^d)$ is the Sobolev space equipped with norm $$\|f\|_{H^s}=\|\<\L\>^{s/2}f\|_{L^2(\mathbb H^d)},$$ where $\<\L\>^{s/2}$ is defined via the spectral decomposition theorem (see \eqref{spectral_decomposition_theorem_1}). 

\item $\mathbb B(U,V)$ is the space of bounded operators from $U$ to $Y$, $\norm{\ \cdot\ }_{\mathbb{B}(U,V)}$ denotes its norm, and $\mathbb B(U):=\mathbb B(U,U)$. In particular, we will denote
\begin{equation*}
	\norm{A} := \norm{A}_{\mathbb{B}(L^2(\mathbb{H}^d))}
\end{equation*} 
for any $A \in \mathbb{B}(L^2(\mathbb{H}^d))$.

\item Given a densely defined operator $A$, its closure will be denoted by $\overline{A}$.

\item For two closed densely defined operators $A$ and $B$, $[A,B]$ denotes the commutator of $A$ and $B$: for $f\in D(A)\cap D(B)$ and $g\in D(A^*)\cap D(B^*)$,
$$
\<[A,B]f,g\>:=\<Bf,A^*g\>-\<Af,B^*g\>. 
$$

\item Given two operators $A$ and $B$, $A\subset B$ means $B$ is an extension of $A$, namely $D(A)\subset D(B)$ and $B=A$ on $D(A)$. 
\end{itemize}

\section{An abstract uniform resolvent estimate}
\label{section_abstract}

In this section, we study a uniform resolvent estimate in an abstract setting, which will play an essential role in the proof of the main theorems. 

We first recall basic facts on the functional calculus for self-adjoint operators used in the paper (see \cite{ReSi} on the spectral theory for self-adjoint operators). For any self-adjoint operator $\mathcal{H}$ on $L^2(\mathbb H^d)$ and any Borel measurable function $\varphi(\lambda)$ on $\R$ which is finite almost everywhere, a densely defined closed operator $\varphi(\mathcal{H})$ is defined via the spectral decomposition theorem as 
	\begin{equation}
		\label{spectral_decomposition_theorem_1}
		\begin{split}
			\<\varphi(\mathcal{H})f,g\> &:=\int_\R \varphi(\lambda) d\<E_\mathcal{H}(\lambda)f,g\>
			\\
			D(\varphi(\mathcal{H})) &:=\big\{f\in L^2(\mathbb H^d)\ |\ \int_\R |\varphi(\lambda)|^2 d\<E_\mathcal{H}(\lambda)f,f\><\infty\big\},
		\end{split}
	\end{equation}
	where $E_\mathcal{H}$ denotes the spectral measure associated with $\mathcal{H}$. Here we list a few basic properties: 
\begin{itemize}
\item $\supp E_\mathcal{H}=\sigma(\mathcal{H})$;
\item $D(\varphi_1(\H)+\varphi_2(\H))=D(\varphi_1(\H))\cap D(\varphi_2(\H))$ and $\overline{\varphi_1(\H)+\varphi_2(\H)}=(\varphi_1+\varphi_2)(\H)$. In particular, they coincide with each other if one between $\varphi_1$ and $\varphi_2$ is bounded;
\item $D(\varphi_1(\H)\varphi_2(\H))=D((\varphi_1\varphi_2)(\H))\cap D(\varphi_2(\H))$ and $\overline{\varphi_1(\H)\varphi_2(\H)}=(\varphi_1\varphi_2)(\H)$. In particular, if $\varphi_2$ is bounded then $\varphi_1(\H)\varphi_2(\H)=(\varphi_1\varphi_2)(\H)$; 
\item $\varphi(\mathcal{H})^*=\overline{\varphi}(\mathcal{H})$. In particular, $\varphi(\mathcal{H})$ is self-adjoint if $\varphi$ is real-valued;
\item If $\varphi\in L^\infty(\R)$, then $\varphi(\mathcal{H})\in \mathbb B(L^2)$ and
\begin{align}
\label{spectral_decomposition_theorem_2}
\|\varphi(\mathcal{H})\|\le \|\varphi\|_{L^\infty(\R)}. 
\end{align}
\end{itemize}

Let $Q=2d+2$ be the homogeneous dimension of $\mathbb H^d$ and 
\begin{equation}
\label{eq:A}
A:=-i \left( z\cdot \partial_z + 2t\partial_t + \frac{Q}{2} \right)=-i z\cdot \nabla_{\mathbb H}-i2t\partial_t-i(d+1)
\end{equation}
be the self-adjoint generator of the dilation group
$$
e^{i\tau A}f(z,t)=e^{\frac Q2 \tau}f(e^\tau z,e^{2\tau}t),\quad \tau\in \R. 
$$
The operator $e^{i\tau A}$ is unitary on $L^2(\mathbb H^d)$. Moreover, it is easy to see that $e^{-i\tau A} \L e^{i\tau A}=e^{2\tau} \L$ (see Lemma\til\ref{lemma_abstract_commutator_1} below). In particular, $e^{i\tau A}$ leaves $H^m(\mathbb H^d)$ invariant for any $m\in \R$. 

From this point on, we will consider $\H$ to be a non-negative self-adjoint operator on $L^2(\mathbb H^d)$, with domain $H^{2s}(\mathbb H^d)$, satisfying the conditions \ref{H1}, \ref{H2} and \ref{H3} in Section \ref{main_result}.

\begin{remark}
	\label{rem:H3}
	Hypothesis \ref{H3} is necessary in order to ensure a joint spectral measure associated to $\L$ and $\H$, and to define the operator $\phi(\L,\H)$ for any measurable, almost everywhere finite function $\phi\colon\R^2 \to \C\cup\{\infty\}$. We refer to Appendix\til\ref{appendix_joint_spectral_theory} for more details. It follows that if $\phi_1,\phi_2:\R\to \C$ are of polynomial growth, then $[\phi_1(\L),\phi_2(\H)]=0$ on $\mathcal S(\mathbb H^d)$. In particular, \ref{H3} implies $\mathcal H$ and $\L$ commute.
\end{remark}

Let us define the operators
\begin{equation}\label{def:SA}
	S_\alpha:=\L^{s/2}\<\L\>^{-\alpha/2},
	\qquad
	A_\alpha:=\<\L\>^{-\alpha/2}A\<\L\>^{-\alpha/2},
\end{equation}
for any $\alpha\ge0$.

\begin{theorem}	
\label{theorem_abstract}
Let $s>0$, $1/2<\mu\le1$ and $\alpha\ge0$ and assume \ref{H1},\til\ref{H2} and\til\ref{H3}. Then
\begin{align}
\label{theorem_abstract_1}
\sup_{\sigma\in \C\setminus\R}\|\<A_\alpha\>^{-\mu}S_\alpha(\H-\sigma)^{-1}S_\alpha\<A_\alpha\>^{-\mu}\|\le \kappa(\H,s,\mu),
\end{align}
where $\kappa(\H,s,\mu)$ is given by
	\begin{equation}\label{def:kappa}
	\kappa(\H,s,\mu)=
	\inf_{b>0}
	e^{2b}
	\left\{
	K_1 b^{-1/2}
	+
	K_2 b^{1/2} D\left(2,b^{1/2}\right)
	+
	K_3
	b^{\mu-1/2} D\left(\frac{1}{\mu-1/2} , b^{\mu-1/2} \right)	
	\right\}^2
\end{equation}
where $K_1:=(2C_1^{2}C_2^{-1})^{-1/2}$, $K_2:= (2C_1^{-1}s)^{1/2} \max\{s,2e^{-(1+s/2)}\}$, $K_3:=\frac{2-\mu}{\mu-1/2} (2C_1s)^{-1/2}$ and 
\begin{equation}
	\label{def:dawson}
	D(p,x) :=e^{-x^p} \int_{0}^{x} e^{\tau^p} d\tau
\end{equation}
is the generalized Dawson's integral.

%
\end{theorem}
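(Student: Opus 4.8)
\textbf{Plan of proof for Theorem~\ref{theorem_abstract}.}
The strategy is to apply the method of weakly conjugate operators (Boutet de Monvel--M\u{a}ntoiu \cite{BM}, Hoshiro \cite{Hoshiro}) with $A$ playing the role of the conjugate operator. The point of hypothesis \ref{H1} is that $A$ is a \emph{strictly} conjugate operator for $\H$: differentiating the relation $e^{-i\tau A}\H e^{i\tau A}=e^{2s\tau}\H$ at $\tau=0$ gives, at least formally, $i[\H,A]=2s\H\ge0$, and since $\sigma(\H)=[0,\infty)$ with no kernel in the relevant sense, the commutator is positive. To run the abstract machinery quantitatively one works with the conjugated resolvent $F_\sigma:=\<A_\alpha\>^{-\mu}S_\alpha(\H-\sigma)^{-1}S_\alpha\<A_\alpha\>^{-\mu}$ and the associated differential inequality in $\tau$ for $e^{i\tau A_\alpha}(\H-\sigma)^{-1}e^{-i\tau A_\alpha}$ (or rather for the sesquilinear form tested against suitable vectors), exploiting that $S_\alpha$ and $\<A_\alpha\>^{-\mu}$ are built to commute appropriately with $\<\L\>$ and hence, via \ref{H3}, interact controllably with $\H$. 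The estimate \eqref{theorem_abstract_1} then emerges from integrating this differential inequality and optimizing over the free parameter.

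Concretely, I would proceed as follows. First, record the commutator identities: $i[\L,A]=2\L$ (this is Lemma~\ref{lemma_abstract_commutator_1} below, an elementary computation from \eqref{eq:A} and \eqref{sublaplacian_2}), whence by functional calculus $i[\<\L\>^{-\alpha/2},A]$ and $i[S_\alpha,A]$ are computed as bounded-times-$\L$-type operators; and $i[\H,A]=2s\H$ from \ref{H1}. Second, set up the weakly-conjugate-operator inequality: for $\mathrm{Im}\,\sigma>0$ one controls $\frac{d}{d\tau}\|e^{-i\tau A_\alpha}u_\sigma\|^2$ where $u_\sigma=(\H-\sigma)^{-1}S_\alpha\<A_\alpha\>^{-\mu}f$, using positivity of $i[\H,A]$ to absorb the bad term and the boundedness of the weights $S_\alpha\<\L\>^{\alpha/2}$, $\<A_\alpha\>^{-\mu}$ to bound the remainder. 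Third, translate the resulting bound into an integral bound $\|F_\sigma\|\lesssim \int_{\R}\|e^{-i\tau A_\alpha}\<A_\alpha\>^{-\mu}\cdot\|\,d\tau$-type expression and carry out the explicit estimates that produce the three terms with constants $K_1,K_2,K_3$. The Dawson-type integrals $D(p,x)$ arise precisely from integrating $e^{-c\tau^{p}}$ against polynomial weights $\jap{\tau}^{-\mu}\sim |\tau|^{-\mu}$, so the term with exponent $\mu-1/2$ comes from the $\jap{A_\alpha}^{-\mu}$ factors while the $D(2,\cdot)$ term comes from a Gaussian-type factor; these are standard once the scalar inequality is in place. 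Finally, take the infimum over the scaling parameter $b>0$ to arrive at \eqref{def:kappa}.

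The constraint $\mu>1/2$ is exactly the threshold for the weighted integral $\int |\tau|^{-\mu}\,d\tau$ near infinity to converge after the Gaussian damping kicks in at scale $b$; it is also why one sees $b^{\mu-1/2}$ and $\frac{1}{\mu-1/2}$ in $K_3$ and in the Dawson argument. The role of $\alpha\ge0$ is flexibility: conjugating by $\<\L\>^{-\alpha/2}$ trades a genuine power of $\L$ in $S_\alpha=\L^{s/2}\<\L\>^{-\alpha/2}$ for a tamer operator, at the cost of replacing $A$ by $A_\alpha$, and the commutator computations go through because $\<\L\>^{-\alpha/2}$ commutes with $\L$ and behaves well under $\ad A$.

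\textbf{Main obstacle.} The delicate point is the quantitative bookkeeping in the weakly-conjugate-operator estimate: one must show that all the auxiliary commutators ($i[S_\alpha,A_\alpha]$, $i[\<A_\alpha\>^{-\mu},A_\alpha]$, and the double commutator $[[\H,A],A]$ appearing implicitly) are controlled \emph{with explicit constants} by $\H$ itself plus harmless bounded errors, so that the positivity $i[\H,A]=2s\H$ genuinely dominates. In the abstract framework this is where one typically needs $\H$ to be of class $C^2$ with respect to $A$; here \ref{H1} gives the exact homogeneity $e^{-i\tau A}\H e^{i\tau A}=e^{2s\tau}\H$, which is far stronger than $C^2$ and makes every commutator with $A$ a constant multiple of $\H$, but one still has to handle the $S_\alpha$ and $\<A_\alpha\>^{-\mu}$ dressings carefully and track how \ref{H2} is used to compare $\H$-norms with $\L^s$-norms (needed to make sense of $S_\alpha(\H-\sigma)^{-1}$ as a bounded-ish object). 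Getting the three constants $K_1,K_2,K_3$ in closed form, rather than just proving boundedness, is the bulk of the work; the structure of the proof, however, is the classical Mourre-type differential-inequality argument, specialized to the purely homogeneous setting where it becomes almost an identity.
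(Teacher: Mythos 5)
You have correctly identified the right framework (the weakly conjugate operator method of Boutet de Monvel--M\u{a}ntoiu and Hoshiro), the right source of positivity ($i[\H,A]=2s\H\ge 0$ from \ref{H1}), the role of \ref{H2} and \ref{H3}, and why $\mu>1/2$ and the infimum over $b$ appear. But the mechanism you propose for running the argument is not the one that works, and one of your steps is vacuous as stated.

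\textbf{The gap.} You propose to control $\frac{d}{d\tau}\|e^{-i\tau A_\alpha}u_\sigma\|^2$ where $u_\sigma=(\H-\sigma)^{-1}S_\alpha\<A_\alpha\>^{-\mu}f$, and to convert this into an integral over $\tau$ whose convergence for $\mu>1/2$ produces the Dawson integrals. This cannot work as written: $e^{-i\tau A_\alpha}$ is unitary, so $\|e^{-i\tau A_\alpha}u_\sigma\|^2=\|u_\sigma\|^2$ is constant in $\tau$ and the $\tau$-derivative is identically zero. More to the point, the weakly conjugate operator method does not proceed by conjugating the resolvent by the flow $e^{i\tau A_\alpha}$ and integrating in $\tau$. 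What is missing from your plan is the double regularization that is the heart of the method. One introduces a dissipative perturbation $\H^\pm_\ep:=\H-(\sigma_1\pm i\sigma_2)\mp i\ep[\H,iA_\alpha]$, whose inverse $G^\pm_\ep$ exists for $\ep>0$, together with a regularized weight $W_\ep:=\<A_\alpha\>^{-\mu}\<\ep A_\alpha\>^{\mu-1}$ (and a $\delta$-cutoff $S_{\alpha,\delta}$ for domain reasons). Setting $F^\pm_\ep:=W_\ep S_{\alpha,\delta}G^\pm_\ep S_{\alpha,\delta}W_\ep$, one shows the differential identity $\pm i\,\partial_\ep F^\pm_\ep = I_1+I_2+I_3+I_4$, where $I_1$ is a commutator term with $A_\alpha$, $I_2$ comes from $W_\ep'$, $I_3$ from the commutator $[S_{\alpha,\delta},iA_\alpha]$, and $I_4$ from the double commutator $[[\H,iA_\alpha],iA_\alpha]$; one then estimates each $I_j$ in terms of $\|F^\pm_\ep\|$ and $\|F^\pm_\ep\|^{1/2}$ using \ref{H2} and a quadratic estimate $\|F^\pm_\ep\|\le (2C_1 s\ep)^{-1}$ coming from positivity. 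The resulting inequality is a quadratic Gronwall inequality in $\ep$ on $(0,b)$, and it is the Gronwall lemma that produces $\kappa(\H,s,\mu)$: $K_1$ comes from the boundary value $\|F^\pm_b\|$, $K_2$ from the $[S_{\alpha,\delta},iA_\alpha]$ term, $K_3$ from the $W_\ep'$ and $A_\alpha W_\ep$ terms, and the Dawson integrals $D(p,x)$ arise from integrating $u_2(\tau)e^{u_1\tau/2}$ over $(0,b)$, not from a $\tau$-integral of a Gaussian against $\<\tau\>^{-\mu}$ as you suggest. Finally one lets $\ep\searrow0$ and $\delta\searrow0$. Without the dissipative $\ep$-regularization, the regularized weight $W_\ep$, the $\ep$-derivative, and the Gronwall step, there is no route from your observations to the quantitative bound \eqref{def:kappa}.

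A smaller point: the homogeneity \ref{H1} holds for $A$, not for $A_\alpha=\<\L\>^{-\alpha/2}A\<\L\>^{-\alpha/2}$ when $\alpha>0$, so $e^{-i\tau A_\alpha}\H e^{i\tau A_\alpha}$ is not simply $e^{2s\tau}\H$; the proof works with the commutators $[\H,iA_\alpha]=2s\H\<\L\>^{-\alpha}$ and $[[\H,iA_\alpha],iA_\alpha]$ directly rather than with the conjugated flow of $A_\alpha$, precisely because this cleaner identity is only available for $\alpha=0$.
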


\begin{remark}
\label{remark_kappa}
Expanding the Maclaurin series in \eqref{def:dawson}, one obtains the bound 
	$
	D(p,x) < x^{1-p}(1-e^{-x^p})
	$ for $p\ge1$ and $x>0$, with which we can give a more explicit estimate for $\kappa(\H,s,\mu)$:
	\begin{equation*}
		\kappa(\H,s,\mu) < \inf_{b>0} \left\{ K_1 b^{-1/2} e^b +(K_2 + K_3 b^{2\mu-2}) (e^b-1) \right\}^2.
	\end{equation*}
Moreover, in case of $\H=\mathcal L$, $C_1=C_2=1$ and one can thus calculate an explicit upper bound of $\kappa(\H,s,\mu)$. For instance, if $s=\mu=1$	then
$$
\kappa(\L,1,1)
=
\inf_{b>0}
\frac{e^{2b}}{2b} \left\{ 1 + 4b \ D(2,b^{1/2}) \right\}^2
\approx
6.42686
< 2(3e^{1/4}-2)^2.
$$
Note that, although the constant $\kappa(\H,s,\mu)$ seems to be far from optimal, obtaining an optimal, or even explicit, constant in uniform resolvent estimates is typically challenging, except for some special cases such as the Laplacian $\Delta$ on $\mathbb{R}$ and $\mathbb{R}^3$. It is also worth remarking that there is extensive literature on the best constant for smoothing effects in the case of $e^{it\Delta_{\mathbb{R}^d}}u_0$ (see for instance \cite{Simon}), but there appear to be no results on the best constant (or even an explicit expression of the constant) for smoothing effects in the case of the inhomogeneous propagator $\int_0^t e^{i(t-s)\Delta_{\mathbb{R}^d}}F(s)ds$.
\end{remark}

\begin{remark}
Theorem \ref{theorem_abstract} is useful for studying, in a unified way, resolvent estimates of the form $$\sup_{\sigma\in \C\setminus\R}\|w a(\L)(\H-\sigma)^{-1}a(\L)w\|<\infty$$ with some weight function $w=w(z,t)$ and Sobolev weight $a(\L)$. Namely, this theorem allows us to reduce the proof of \eqref{theorem_resolvent_general_1} into showing that $\<A_\alpha\>^{\mu} S_\alpha^{-1} a(\L) w\in \mathbb B(L^2)$ for suitable choices of $a$ and $w$, and this operator is independent of $\H$.
\end{remark}

For the proof of Theorem \ref{theorem_abstract}, we prepare several lemmas. The following is useful to calculate various commutators against $iA$ and will be frequently used throughout the paper. 

\begin{lemma}
\label{lemma_abstract_commutator_1}
Let $\varphi \in C([0,\infty))$ be such that $|\varphi(\lambda)|\lesssim\<\lambda\>^r$ with some $r\in \R$. Then
\begin{align}
\label{lemma_abstract_commutator_1_1}
e^{-i\tau A}\varphi(\H) e^{i\tau A}=\varphi(e^{2s\tau}\H)\quad\text{on}\quad H^{2sr}(\mathbb H^d).
\end{align}
If in addition $\varphi \in C([0,\infty)) \cap C^1((0,\infty))$ and $|\lambda \varphi'(\lambda)|\lesssim\<\lambda\>^r$, then
\begin{align}
\label{lemma_abstract_commutator_1_2}
[\varphi(\H),iA]=2s\H\varphi'(\H)\quad\text{on}\quad H^{2sr}(\mathbb H^d).
\end{align}
In particular, 
\begin{align*}
[\H,iA]=2s\H,\quad [[\H,iA],iA]=4s^2\H\quad\text{on}\quad H^{2s}(\mathbb H^d).
\end{align*}
\end{lemma}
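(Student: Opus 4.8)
The plan is to prove \eqref{lemma_abstract_commutator_1_1} first and then derive \eqref{lemma_abstract_commutator_1_2} and its consequences by differentiating in $\tau$. For the first identity, the starting point is the homogeneity hypothesis \ref{H1} in the form \eqref{homogeneous}, i.e. $e^{-i\tau A}\H e^{i\tau A}=e^{2s\tau}\H$ on $H^{2s}(\mathbb H^d)$. Since $e^{i\tau A}$ is unitary on $L^2(\mathbb H^d)$, conjugation by $e^{i\tau A}$ is a $*$-isomorphism, so it maps the resolvent of $\H$ to the resolvent of $e^{-i\tau A}\H e^{i\tau A}=e^{2s\tau}\H$: explicitly $e^{-i\tau A}(\H-\zeta)^{-1}e^{i\tau A}=(e^{2s\tau}\H-\zeta)^{-1}$ for $\zeta\in\C\setminus\R$. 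By the spectral theorem this extends to $e^{-i\tau A}\psi(\H)e^{i\tau A}=\psi(e^{2s\tau}\H)$ first for $\psi$ in the algebra generated by resolvents (e.g.\ bounded continuous $\psi$ via Stone--Weierstrass / the functional calculus being determined by resolvents), and then to general continuous $\varphi$ with polynomial growth $|\varphi(\lambda)|\lesssim\<\lambda\>^r$ by a standard approximation argument on the appropriate domain. The domain bookkeeping is what forces the restriction to $H^{2sr}(\mathbb H^d)$: since $D(\H^r)=H^{2sr}$ and $e^{i\tau A}$ preserves every $H^m$, the operator $\varphi(e^{2s\tau}\H)$ is defined on $H^{2sr}$ and the identity holds there.

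For \eqref{lemma_abstract_commutator_1_2}, I would fix $f\in H^{2sr}(\mathbb H^d)$ (or a dense core such as $\S(\mathbb H^d)$) and differentiate the identity $e^{-i\tau A}\varphi(\H)e^{i\tau A}f=\varphi(e^{2s\tau}\H)f$ at $\tau=0$. The left-hand side differentiates to $[\varphi(\H),iA]f$ in the sense of the quadratic-form/commutator definition recorded in the Notation section, using that $\tau\mapsto e^{i\tau A}f$ is strongly $C^1$ with derivative $iAe^{i\tau A}f$ on the relevant domain. The right-hand side differentiates, via the chain rule in the functional calculus, to $2s\,\H\varphi'(\H)f$; here the hypothesis $|\lambda\varphi'(\lambda)|\lesssim\<\lambda\>^r$ guarantees that $\lambda\varphi'(\lambda)$ also has growth order $r$, so $\H\varphi'(\H)=(\lambda\varphi'(\lambda))(\H)$ is defined on $H^{2sr}$ and the differentiation is justified (dominated convergence against the spectral measure, or an explicit difference-quotient estimate $|\frac{\varphi(e^{2s\tau}\lambda)-\varphi(\lambda)}{\tau}-2s\lambda\varphi'(\lambda)|$ controlled uniformly for small $\tau$). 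Equating the two derivatives gives \eqref{lemma_abstract_commutator_1_2}. The special cases follow immediately: take $\varphi(\lambda)=\lambda$ (so $r=1$, $\varphi'(\lambda)=1$, $\lambda\varphi'(\lambda)=\lambda$) to get $[\H,iA]=2s\H$ on $H^{2s}$, and then apply \eqref{lemma_abstract_commutator_1_2} a second time with $\varphi(\lambda)=2s\lambda$ to get $[[\H,iA],iA]=[2s\H,iA]=2s\cdot 2s\H=4s^2\H$ on $H^{2s}$.

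The main obstacle I anticipate is not the formal computation but making the domain and differentiability claims rigorous: one must be careful that the commutator $[\varphi(\H),iA]$, a priori only a sesquilinear form on $D(A)\cap D(\varphi(\H))$ paired against $D(A^*)\cap D(\varphi(\H)^*)$, genuinely equals the (a priori bounded-below-in-order) operator $2s\H\varphi'(\H)$ on $H^{2sr}$, and that the interchange of $\frac{d}{d\tau}|_{\tau=0}$ with the spectral integral is legitimate. The cleanest route is probably to test against $f,g\in\S(\mathbb H^d)$, use that $\S(\mathbb H^d)$ is a core for all the operators in play and is preserved by $e^{i\tau A}$, write $\<e^{-i\tau A}\varphi(\H)e^{i\tau A}f,g\>-\<\varphi(\H)f,g\>=\int_\R(\varphi(e^{2s\tau}\lambda)-\varphi(\lambda))\,d\<E_\H(\lambda)f,g\>$, divide by $\tau$, and pass to the limit by dominated convergence, the dominating function being furnished by the mean value theorem together with the growth bound on $\lambda\varphi'(\lambda)$ and the rapid decay of the spectral measure of $f,g\in\S(\mathbb H^d)$. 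Everything else is bookkeeping once \eqref{lemma_abstract_commutator_1_1} is established.
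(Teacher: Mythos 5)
Your proposal follows essentially the same route as the paper's proof: starting from the homogeneity hypothesis \eqref{homogeneous}, passing from the resolvent identity $e^{-i\tau A}(\H-\zeta)^{-1}e^{i\tau A}=(e^{2s\tau}\H-\zeta)^{-1}$ to the full functional-calculus identity \eqref{lemma_abstract_commutator_1_1}, and then differentiating the spectral integral at $\tau=0$ using the mean value theorem and dominated convergence, with the growth bound on $\lambda\varphi'(\lambda)$ providing the domination. The only cosmetic difference is that you propagate the resolvent identity via Stone--Weierstrass and approximation whereas the paper goes through Stone's formula to the spectral projections directly; both are standard and lead to the same place, and your handling of the domain bookkeeping and of the two special cases matches the paper's.
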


\begin{remark}
Precisely speaking, \eqref{lemma_abstract_commutator_1_2} means that $[\varphi(\H),iA]$ defined originally as a quadratic form on $D(A)\cap H^{2sr}(\mathbb H^d)$ extends to a self-adjoint operator, which coincides with $2s\H\varphi'(\H)$. 
{\color{purple}

}
\end{remark}

\begin{proof}[Proof of Lemma \ref{lemma_abstract_commutator_1}]
The formula \eqref{homogeneous} implies $$e^{-i\tau A}(\H-\sigma)^{-1} e^{i\tau A}=(e^{2s\tau}\H-\sigma)^{-1}$$ for all $\sigma\in \C\setminus[0,\infty)$. Let $E_\H$ be the spectral measure of $\H$. By using Stone's formula: 
\begin{align*}
E_\H((a,b))=\frac{1}{2\pi i}\lim_{\ep\searrow0}\int_a^b \{(\H-\lambda-i\ep)^{-1}-(\H-\lambda+i\ep)^{-1}\}d\lambda
\end{align*}
and a standard limiting argument, we thus obtain for any Borel measurable set $\Omega\subset\R$, 
$$
e^{-i\tau A}E_\H(\Omega) e^{i\tau A}=E_{e^{2s\tau}\H}(\Omega)
$$ which, combined with the formula \eqref{spectral_decomposition_theorem_1}, implies \eqref{lemma_abstract_commutator_1_1}. 
We also obtain 
\begin{align*}
\<[\varphi(\H),iA]f,g\>
&=\frac{d}{d\tau}\<e^{-i\tau A}\varphi(\H)e^{i\tau A}f,g\>|_{\tau=0}=\frac{d}{d\tau}\<\varphi(e^{2s\tau}\H)f,g\>|_{\tau=0}\\
&=\frac{d}{d\tau}\Big |_{\tau=0}\int_0^\infty \varphi(e^{2s\tau}\lambda) \, d\<E_{\H}(\lambda)f,g\>
\\
&=\lim_{\tau\searrow0} \int_0^\infty 2s e^{2s\tau_0}\lambda \varphi'(e^{2s\tau_0}\lambda) \, d\<E_{\H}(\lambda)f,g\>
\end{align*}
for $f\in H^{2sr}(\mathbb H^d)$, $g\in L^2(\mathbb H^d)$ and some $\tau_0\in(0,\tau)$, where the last step is justified by the mean value theorem. Since $\tau$ is small and using \eqref{spectral_decomposition_theorem_1}, we have
$$
\int_\R 2s e^{2s\tau_0}\lambda \varphi'(e^{2s\tau_0}\lambda) \, d\<E_{\H}(\lambda)f,g\>
\lesssim
\int_\R \<\lambda\>^r \, d\<E_{\H}(\lambda)f,g\>
\le
\norm{\<\H\>^r f} \norm{g}
< \infty.
$$ 
Therefore by dominated convergence theorem we obtain
$$
\<[\varphi(\H),iA]f,g\>=\int_\R 2s\lambda \varphi'(\lambda)d\<E_{\H}(\lambda)f,g\>
=\<2s\H\varphi'(\H)f,g\>
$$
and \eqref{lemma_abstract_commutator_1_2} follows. 
\end{proof}

	\begin{remark}
		In the proof of Lemma\til\ref{lemma_abstract_commutator_1} we actually explicitly use only the polynomial bound on $\lambda\varphi'(\lambda)$. However, the condition $|\varphi(\lambda)|\le\jap{\lambda}^r$ is stated to ensure that the domain of $\varphi(\H)$ contains $\mathcal S(\mathbb H^d)$. 
	\end{remark}


\begin{lemma}
\label{lemma_abstract_domain}
Let $\varphi\in C([0,\infty))\cap C^1((0,\infty))$ be such that $\varphi(\lambda)$ and $\lambda \varphi'(\lambda)$ are bounded on $[0,\infty)$. Then $\varphi(\H)$ leaves $D(A)$ invariant and $$A\varphi(\H)=\varphi(\H)A+2is\H\varphi'(\H)$$ on $D(A)$. 
\end{lemma}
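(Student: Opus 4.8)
The plan is to upgrade the quadratic-form commutator identity \eqref{lemma_abstract_commutator_1_2} (which, since $\varphi$ and $\lambda\varphi'(\lambda)$ are bounded, holds on all of $L^2(\mathbb H^d)$, i.e.\ with $r=0$) to a genuine operator identity by differentiating the conjugation formula \eqref{lemma_abstract_commutator_1_1} at $\tau=0$. First I would record two preliminary facts: since $\lambda\mapsto\lambda\varphi'(\lambda)$ is bounded, $\H\varphi'(\H)=(\lambda\varphi'(\lambda))(\H)$ is a bounded operator on $L^2(\mathbb H^d)$ by \eqref{spectral_decomposition_theorem_2} (so the right-hand side of the claimed identity is everywhere defined), and $\varphi(\H)\in\mathbb B(L^2)$ likewise. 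By Lemma \ref{lemma_abstract_commutator_1} with $r=0$, replacing $\tau$ by $-\tau$, one has $e^{i\tau A}\varphi(\H)=\varphi(e^{-2s\tau}\H)e^{i\tau A}$ on $L^2(\mathbb H^d)$, whence for $f\in D(A)$ and $\tau\neq0$,
$$
\frac{e^{i\tau A}-1}{\tau}\,\varphi(\H)f
=\frac{\varphi(e^{-2s\tau}\H)-\varphi(\H)}{\tau}\,e^{i\tau A}f+\varphi(\H)\,\frac{e^{i\tau A}-1}{\tau}\,f .
$$
The second summand tends to $\varphi(\H)(iA)f$ as $\tau\to0$ because $f\in D(A)$ and $\varphi(\H)$ is bounded, so it remains to show that the first summand converges to $-2s\H\varphi'(\H)f$. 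Writing $R_\tau:=\tfrac1\tau\big(\varphi(e^{-2s\tau}\H)-\varphi(\H)\big)$, the decomposition $R_\tau e^{i\tau A}f+2s\H\varphi'(\H)f=R_\tau(e^{i\tau A}f-f)+\big(R_\tau+2s\H\varphi'(\H)\big)f$ together with the strong continuity of $e^{i\tau A}$ reduces this to the two claims $\sup_{0<|\tau|<1}\|R_\tau\|<\infty$ and $R_\tau\to-2s\H\varphi'(\H)$ strongly.

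Both claims are consequences, via the spectral theorem and dominated convergence, of the analogous statements for the scalar multipliers $r_\tau(\lambda):=\tfrac1\tau\big(\varphi(e^{-2s\tau}\lambda)-\varphi(\lambda)\big)$, namely that $\sup_{\lambda\ge0,\,0<|\tau|<1}|r_\tau(\lambda)|<\infty$ and $r_\tau(\lambda)\to-2s\lambda\varphi'(\lambda)$ as $\tau\to0$ for each $\lambda>0$. For these I would use the mean value theorem: $r_\tau(\lambda)=\varphi'(\xi_{\tau,\lambda})\cdot\tfrac1\tau(e^{-2s\tau}\lambda-\lambda)$ with $\xi_{\tau,\lambda}$ between $e^{-2s\tau}\lambda$ and $\lambda$, hence $\xi_{\tau,\lambda}\ge e^{-2s\tau}\lambda$; combining $|\mu\varphi'(\mu)|\le M$ with $|e^{-2s\tau}-1|\le C|\tau|$ for $|\tau|<1$ gives $|r_\tau(\lambda)|\le M e^{2s|\tau|}\,|e^{-2s\tau}-1|/|\tau|\le C'M$, uniformly in $\lambda>0$, and $r_\tau(\lambda)\to-2s\lambda\varphi'(\lambda)$ follows from $C^1$-differentiability at $\lambda>0$. (The value at $\lambda=0$ is irrelevant, since $r_\tau(0)\equiv0$ and \ref{H2} forces $\ker\H=\ker\L^s=\{0\}$, so $E_\H(\{0\})=0$.) Then $\|R_\tau\|\le\sup_{\lambda\ge0}|r_\tau(\lambda)|$ yields the uniform bound, while $\|\big(R_\tau+2s\H\varphi'(\H)\big)f\|^2=\int_0^\infty|r_\tau(\lambda)+2s\lambda\varphi'(\lambda)|^2\,d\<E_\H(\lambda)f,f\>\to0$ by dominated convergence yields the strong convergence.

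Putting these together, $\tfrac1\tau(e^{i\tau A}-1)\varphi(\H)f$ converges in $L^2(\mathbb H^d)$ to $\varphi(\H)(iA)f-2s\H\varphi'(\H)f$; by definition of $D(A)$ this means $\varphi(\H)f\in D(A)$ and $iA\varphi(\H)f=\varphi(\H)(iA)f-2s\H\varphi'(\H)f$, which upon multiplying by $-i$ is exactly $A\varphi(\H)=\varphi(\H)A+2is\H\varphi'(\H)$ on $D(A)$. I expect the only delicate point to be the uniform-in-$(\tau,\lambda)$ bound on the scalar difference quotient $r_\tau(\lambda)$ near $\tau=0$: since $\varphi'$ is controlled only through $\lambda\varphi'(\lambda)$ and may blow up like $\lambda^{-1}$ as $\lambda\to0^+$, one has to check that the mean-value point $\xi_{\tau,\lambda}$ remains comparable to $\lambda$ uniformly, which is precisely the elementary inequality $\xi_{\tau,\lambda}\ge e^{-2s\tau}\lambda$ used above; the rest of the argument is a routine double-limit/dominated-convergence manipulation.
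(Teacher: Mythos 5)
Your proof is correct, but it takes a genuinely different route from the paper's. The paper's argument is shorter: it already has the quadratic-form identity $[\varphi(\H),iA]=2is\H\varphi'(\H)$ on $D(A)$ from Lemma~\ref{lemma_abstract_commutator_1} with $r=0$, and then for $f,g\in D(A)$ it rewrites $\<\varphi(\H)f,Ag\>=\<2is\H\varphi'(\H)f+\varphi(\H)Af,g\>$; since the right-hand side is $\<h,g\>$ for a fixed $h\in L^2$ as $g$ ranges over $D(A)$, the definition of the adjoint gives $\varphi(\H)f\in D(A^*)=D(A)$ and $A\varphi(\H)f=h$ in one line. You instead use the Stone-generator characterization of $D(A)$ via norm convergence of the difference quotients $\tau^{-1}(e^{i\tau A}-1)\varphi(\H)f$, which forces you to re-derive the uniform-in-$\lambda$ bound on the scalar difference quotient $r_\tau(\lambda)$ — an estimate whose essential content is already buried in the paper's proof of Lemma~\ref{lemma_abstract_commutator_1} (same mean-value-theorem device). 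Your route is thus somewhat redundant relative to the machinery the paper has already set up, but it is self-contained, makes the domain invariance completely explicit, and avoids any appeal to the adjoint. One small slip: the inequality $\xi_{\tau,\lambda}\ge e^{-2s\tau}\lambda$ as stated is only valid for $\tau>0$; for $\tau<0$ one has $\xi_{\tau,\lambda}\ge\lambda$, and the uniform statement you actually need and subsequently use is $\xi_{\tau,\lambda}\ge e^{-2s|\tau|}\lambda$ — harmless, since your final displayed bound already carries the $|\tau|$ in the exponent.
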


\begin{proof}
By Lemma \ref{lemma_abstract_commutator_1} with $r=0$, $[A,\varphi(\H)]=2is\H\varphi'(\H)\in \mathbb B(L^2)$. Let $f,g\in D(A)$. Then
\begin{align*}
\<\varphi(\H)f,Ag\>
&=\<\varphi(\H)f,Ag\>-\<Af,\varphi(\H)^*g\>+\<Af,\varphi(\H)^*g\>\\
&=\<[A,\varphi(\H)]f,g\>+\<Af,\varphi(\H)^*g\>\\
&=\<2is\H\varphi'(\H)f,g\>+\<Af,\varphi(\H)^*g\>.
\end{align*}
Since $2is\H\varphi'(\H)f+\varphi(\H)Af\in L^2$, this equality means $\varphi(\H)f\in D(A^*)=D(A)$ and $A\varphi(\H)f=2is\H\varphi'(\H)f+\varphi(\H)Af$ by the definition of $A^*$ and the self-adjointness of $A$. 
\end{proof}


Recall the definition of $A$ in \eqref{eq:A} and $A_{\alpha}$ in \eqref{def:SA}.
\begin{lemma}
\label{lemma_B}
$A$ is essentially self-adjoint on $\mathcal S(\mathbb H^d)$.
Moreover, for any $\alpha>0$, $A_\alpha$ is essentially self-adjoint on $D(A)$.
\end{lemma}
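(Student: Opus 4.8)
The plan is to establish essential self-adjointness by Nelson's analytic vector theorem (or, more directly, by exhibiting an explicitly invariant core for the unitary group and appealing to Stone's theorem). For the first assertion, $A$ as defined in \eqref{eq:A} is, up to the additive constant $-i(d+1)$, the generator of the dilation group $e^{i\tau A}f(z,t)=e^{Q\tau/2}f(e^\tau z, e^{2\tau}t)$, which is a strongly continuous one-parameter unitary group on $L^2(\mathbb H^d)$. Its generator is therefore self-adjoint, with domain $\{f : \tau\mapsto e^{i\tau A}f \text{ is differentiable}\}$. The point is that $\mathcal S(\mathbb H^d)$ is invariant under $e^{i\tau A}$ — dilations and the scalar factor preserve the Schwartz class — and is dense in $L^2(\mathbb H^d)$, so by Nelson's invariant-subspace criterion (a dense subspace of $C^\infty$-vectors invariant under the group is a core) $A$ is essentially self-adjoint on $\mathcal S(\mathbb H^d)$; equivalently, the restriction of $A$ to $\mathcal S(\mathbb H^d)$ has deficiency indices zero.

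For the second assertion I would argue similarly. By \eqref{homogeneous} with $\H=\L$ (Lemma~\ref{lemma_abstract_commutator_1}), $e^{-i\tau A}\<\L\>^{-\alpha/2}e^{i\tau A}=\<e^{2\tau}\L\>^{-\alpha/2}$, and since $e^{i\tau A}$ leaves $H^m(\mathbb H^d)$ invariant for every $m$, the operator $A_\alpha=\<\L\>^{-\alpha/2}A\<\L\>^{-\alpha/2}$ is symmetric on $D(A)$ (using that $\<\L\>^{-\alpha/2}$ maps $D(A)$ into $D(A)$, which follows from Lemma~\ref{lemma_abstract_domain} applied to $\varphi(\lambda)=\<\lambda\>^{-\alpha/2}$ with $\H=\L$). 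To get essential self-adjointness one constructs the associated unitary group: define $U_\alpha(\tau)$ as the unitary propagator generated by $A_\alpha$, or more concretely observe that, since $\<\L\>^{-\alpha/2}$ is bounded with bounded inverse between the relevant Sobolev scales and commutes with $\L$, the symmetric operator $A_\alpha$ is unitarily related (via the spectral calculus of $\L$, fiber by fiber in the spectral parameter of $\L$) to a perturbation of $A$ that is relatively bounded. Concretely, on $\mathcal S(\mathbb H^d)$ one has $A_\alpha = A + \tfrac12[\,\<\L\>^{-\alpha/2},A\,]\<\L\>^{-\alpha/2} + \<\L\>^{-\alpha/2}\tfrac12[\,A,\<\L\>^{-\alpha/2}\,]$, and by Lemma~\ref{lemma_abstract_commutator_1} the commutator $[\<\L\>^{-\alpha/2},A]=2s\L\cdot(-\tfrac{\alpha}{2})\L^{\,?}\cdots$ — more cleanly, $[\<\L\>^{-\alpha/2},iA]= -\alpha\,\L\<\L\>^{-\alpha/2-1}$ is a bounded operator. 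Hence $A_\alpha - A$ restricted to $\mathcal S(\mathbb H^d)$ is bounded, $\mathcal S(\mathbb H^d)$ is a core for $A$ by the first part, and by the Kato–Rellich / Wüst theorem a symmetric bounded perturbation of an (essentially) self-adjoint operator is essentially self-adjoint on the same core; since $\mathcal S(\mathbb H^d)\subset D(A)$ and $D(A)$ is a core for $A$, $A_\alpha$ is essentially self-adjoint on $D(A)$.

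The main obstacle I anticipate is the bookkeeping in the second part: verifying rigorously that $\<\L\>^{-\alpha/2}$ maps $D(A)$ into $D(A)$ and that the commutator computations, which are a priori only valid as quadratic-form identities on $\mathcal S(\mathbb H^d)$ or on $D(A)\cap H^m$, genuinely identify $A_\alpha$ with $A$ plus a \emph{bounded} operator on a dense subspace. This is exactly the type of domain issue handled by Lemmas~\ref{lemma_abstract_commutator_1} and~\ref{lemma_abstract_domain}, so the honest proof will invoke those lemmas with $\varphi(\lambda)=\<\lambda\>^{-\alpha/2}$; once the perturbation $A_\alpha-A$ is known to be bounded and symmetric on a core of $A$, essential self-adjointness of $A_\alpha$ on $D(A)$ is immediate. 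The first assertion, by contrast, is essentially a textbook application of Stone's theorem plus Nelson's core criterion and should be one or two lines.
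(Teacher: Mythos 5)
Your proof of the first assertion (essential self-adjointness of $A$ on $\mathcal S(\mathbb H^d)$) is fine and is exactly the route the paper takes: $e^{i\tau A}$ is a strongly continuous unitary group, $\mathcal S(\mathbb H^d)$ is dense and invariant, and Nelson's invariant-core criterion applies.

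The second part has a genuine gap: $A_\alpha - A$ is \emph{not} a bounded operator, so Kato--Rellich/W\"ust does not apply. Your algebraic identity is already off — for $B=\<\L\>^{-\alpha/2}$ one has
\begin{equation*}
BAB = \<\L\>^{-\alpha}A + \<\L\>^{-\alpha/2}\,[A,\<\L\>^{-\alpha/2}],
\end{equation*}
and while the commutator $[A,\<\L\>^{-\alpha/2}]$ is indeed bounded (Lemma~\ref{lemma_abstract_commutator_1}), the first term is $\<\L\>^{-\alpha}A$, \emph{not} $A$. Hence $A_\alpha - A = (\<\L\>^{-\alpha}-1)A + \text{(bounded)}$, and $(\<\L\>^{-\alpha}-1)A$ is unbounded: $\<\L\>^{-\alpha}-1$ tends to $-1$ along high-frequency states, so this term behaves like $-A$ in that regime. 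In effect $\<\L\>^{-\alpha/2}$ compresses $A$ rather than perturbing it by a bounded amount, and the sandwiched operator $A_\alpha$ is not comparable to $A$ in the relative-boundedness sense. So the boxed conclusion "$A_\alpha-A$ is bounded on a core, hence $A_\alpha$ is essentially self-adjoint" does not hold as stated.

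The paper instead argues directly via deficiency subspaces. It first uses Lemma~\ref{lemma_abstract_domain} to verify $\<\L\>^{-\alpha/2}D(A)\subset D(A)$, so $A_\alpha$ is well defined and symmetric on $D(A)$. Then, for $f\in\Ran(A_\alpha\pm i\sigma)^\perp\cap D(A)$ and arbitrary $g\in D(A)$, one expands
\begin{equation*}
0=\<f,(A_\alpha\pm i\sigma)g\>=\<\<\L\>^{-\alpha/2}f,(A\pm i\sigma)\<\L\>^{-\alpha/2}g\>\mp i\sigma\<f,(1-\<\L\>^{-\alpha/2})g\>.
\end{equation*}
Testing with $g=f$ and $g=if$ and separating real and imaginary parts shows $\<\L\>^{-\alpha/2}f\perp\Ran(A\pm i\sigma)$, which is dense by essential self-adjointness of $A$; thus $\<\L\>^{-\alpha/2}f=0$, and since $\<\L\>^{-\alpha/2}$ is positive injective, $f=0$. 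Density of $D(A)$ and closedness of the orthogonal complement then give $\Ran(A_\alpha\pm i\sigma)^\perp=\{0\}$, i.e. essential self-adjointness. This algebraic "transfer of deficiency vectors" through the bounded positive operator $\<\L\>^{-\alpha/2}$ is the key idea that your perturbative argument misses.
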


\begin{proof}

The assertion for $A$ follows from the Nelson's Criterion (see \cite[Proposition 5.3]{Amrein}).
By Lemma\til\ref{lemma_abstract_domain}, $\<\L\>^{-\alpha/2}D(A)\subset D(A)$ and $A_\alpha$ thus makes sense on $D(A)$. To prove the essential self-adjointness of $A_\alpha$, it is enough to check $\overline{\Ran(A_\alpha\pm i\sigma)}=L^2(\mathbb H^d)$ for some $\sigma>0$. By the essential self-adjointness of $A$, $\Ran(A\pm i\sigma)^\perp=\{0\}$ for any $\sigma>0$. Then
\begin{align*}
0&=\<f,(A_\alpha\pm i\sigma )g\>=\<f,(\<\L\>^{-\alpha/2}A\<\L\>^{-\alpha/2}\pm i\sigma)g\>\\
&=\<\<\L\>^{-\alpha/2}f,(A\pm i\sigma)\<\L\>^{-\alpha/2}g\>\mp i\sigma \<f,(1-\<\L\>^{-\alpha/2})g\>
\end{align*}
for $f\in \Ran(A_\alpha\pm i\sigma)^\perp\cap D(A)$ and $g\in D(A)$. Letting $g=f$ and taking the real parts imply $$\Re \<\<\L\>^{-\alpha/2}f,(A\pm i\sigma)\<\L\>^{-\alpha/2}f\>=0.$$ Similarly, letting $g=if$ and taking the imaginary part imply $$\Im\<\<\L\>^{-\alpha/2}f,(A\pm i\sigma)\<\L\>^{-\alpha/2}f\>=0.$$ Thus $\<\L\>^{-\alpha/2}f$ belongs to $\Ran(A\pm i\sigma)^\perp$ and vanishes identically. Since $\<\L\>^{-\alpha/2}$ is positive, $f\equiv0$. This shows $\Ran(A_\alpha\pm i\sigma)^\perp=\{0\}$ since $D(A)$ is dense in $L^2(\mathbb{H}^d)$ and $\Ran(A_\alpha\pm i\sigma)^\perp$ is closed. Hence $\overline{\Ran(A_\alpha\pm i\sigma)}=L^2(\mathbb H^d)$. 
\end{proof}


From now on, we use the same symbol $A_\alpha$ for its self-adjoint extension. Applying Lemma \ref{lemma_abstract_domain} repeatedly, we have $\varphi(\H)D(A_\alpha)\subset D(A_\alpha)$ and $\varphi(\L)D(A_\alpha)\subset D(A_\alpha)$ for $\varphi$ satisfying the same conditions as in Lemma \ref{lemma_abstract_domain}. 

\begin{lemma}
\label{lemma_W}
Let $0<\mu\le1$ and
$$
	W_\ep=\<A_\alpha\>^{-\mu}\<\ep A_\alpha\>^{\mu-1},
$$
where $A_\alpha$ is defined in \eqref{def:SA}.
Then the map $\ep\mapsto W_\ep \in \mathbb B(L^2(\mathbb H^d))$ is continuous on $[0,\infty)$ and $C^1$ on $(0,\infty)$. Moreover, 
$$
\norm{W_\ep}\le 1
$$
for all $\ep\ge0$ and 
$$
\norm{W_\ep'}\le (1-\mu) \ep^{\mu-1},\quad \norm{A_\alpha W_\ep }\le \ep^{\mu-1}
$$
for all $\ep>0$, where 
$W_\ep':=\frac{d}{d\ep}W_\ep=(\mu-1)\ep\<A_\alpha\>^{-\mu}|A_\alpha|^2\<\ep A_\alpha\>^{\mu-3}
$. 
\end{lemma}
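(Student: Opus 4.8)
The plan is to observe that each operator appearing in the statement is a bounded Borel function of the self-adjoint operator $A_\alpha$ (taking the self-adjoint extension furnished by Lemma~\ref{lemma_B}), so that, via the spectral theorem and the norm estimate \eqref{spectral_decomposition_theorem_2}, every assertion reduces to a one-variable inequality for the functions $h_\ep(\lambda)=(1+\lambda^2)^{-\mu/2}(1+\ep^2\lambda^2)^{(\mu-1)/2}$ on $\R$, for which $W_\ep=h_\ep(A_\alpha)$. First I would settle the two static bounds. Since $0<\mu\le1$ every factor of $h_\ep$ lies in $[0,1]$, hence $\|W_\ep\|\le\|h_\ep\|_{L^\infty(\R)}\le1$. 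For the second, note that $W_\ep$ maps $L^2$ into $D(A_\alpha)$ (because $\lambda h_\ep(\lambda)$ is bounded) and $A_\alpha W_\ep=g_\ep(A_\alpha)$ with $g_\ep(\lambda)=\lambda h_\ep(\lambda)$; combining $|\lambda|(1+\lambda^2)^{-\mu/2}\le|\lambda|^{1-\mu}$ with $(1+\ep^2\lambda^2)^{(\mu-1)/2}\le(\ep|\lambda|)^{\mu-1}$ (both exponents being $\le0$) gives $|g_\ep(\lambda)|\le\ep^{\mu-1}$, whence $\|A_\alpha W_\ep\|\le\ep^{\mu-1}$.

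For the continuity of $\ep\mapsto W_\ep\in\mathbb B(L^2)$ on $[0,\infty)$ the key point is that $h_\ep\to h_{\ep_0}$ only pointwise in $\lambda$, not in any naive uniform sense --- this is the only genuinely delicate step, and I would handle it by the classical split at a large radius $R$. Since $x\mapsto(1+x)^{(\mu-1)/2}$ is globally $\tfrac{1-\mu}{2}$-Lipschitz on $[0,\infty)$, on $|\lambda|\le R$ one gets $|h_\ep(\lambda)-h_{\ep_0}(\lambda)|\le\tfrac{1-\mu}{2}|\ep^2-\ep_0^2|R^2$, while on $|\lambda|>R$ the decaying factor yields $|h_\ep(\lambda)-h_{\ep_0}(\lambda)|\le2(1+\lambda^2)^{-\mu/2}\le2R^{-\mu}$; choosing $R$ large and then $\ep$ close to $\ep_0$ makes $\|h_\ep-h_{\ep_0}\|_{L^\infty(\R)}$ arbitrarily small, so $W_\ep$ is norm-continuous by \eqref{spectral_decomposition_theorem_2}. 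The same split shows $\ep\mapsto g_\ep(A_\alpha)$ and, below, $\ep\mapsto W_\ep'$ are norm-continuous as well.

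For the $C^1$ statement on $(0,\infty)$ I would first differentiate in $\ep$ to find $\partial_\ep h_\ep(\lambda)=(\mu-1)\ep\lambda^2(1+\lambda^2)^{-\mu/2}(1+\ep^2\lambda^2)^{(\mu-3)/2}$, which is exactly the scalar symbol of the operator $W_\ep'$ written in the statement; in particular $W_\ep'=k_\ep(A_\alpha)$ is bounded with $|k_\ep(\lambda)|\le(1-\mu)\ep^{-1}$. Differentiating once more gives $\partial_\ep^2 h_\ep(\lambda)=(\mu-1)\lambda^2(1+\lambda^2)^{-\mu/2}(1+\ep^2\lambda^2)^{(\mu-5)/2}\bigl(1+(\mu-2)\ep^2\lambda^2\bigr)$, and the elementary bounds $(1+\lambda^2)^{-\mu/2}\le1$, $|1+(\mu-2)\ep^2\lambda^2|\le(2-\mu)(1+\ep^2\lambda^2)$ and $\lambda^2(1+\ep^2\lambda^2)^{(\mu-3)/2}\le\ep^{-2}$ give $\sup_\lambda|\partial_\ep^2 h_\ep(\lambda)|\le(1-\mu)(2-\mu)\ep^{-2}$, locally bounded on $(0,\infty)$. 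Taylor's formula with integral remainder then yields, for $\ep$ and $\ep+\delta$ in a compact subinterval of $(0,\infty)$, $\|\delta^{-1}(W_{\ep+\delta}-W_\ep)-W_\ep'\|\le\sup_\lambda|\delta^{-1}(h_{\ep+\delta}(\lambda)-h_\ep(\lambda))-\partial_\ep h_\ep(\lambda)|\le\tfrac{|\delta|}{2}\sup_{|r-\ep|\le|\delta|}\sup_\lambda|\partial_r^2 h_r(\lambda)|\to0$ as $\delta\to0$; combined with the norm-continuity of $\ep\mapsto W_\ep'$ this proves $W\in C^1((0,\infty);\mathbb B(L^2))$ with derivative $W_\ep'$.

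It remains to improve the crude bound on $W_\ep'$ to $\|W_\ep'\|\le(1-\mu)\ep^{\mu-1}$, i.e. to show $\sup_\lambda F(\lambda)\le\ep^{\mu-1}$ for $F(\lambda)=\ep\lambda^2(1+\lambda^2)^{-\mu/2}(1+\ep^2\lambda^2)^{(\mu-3)/2}$, by splitting on $\ep$. For $\ep\ge1$, writing $(1+\ep^2\lambda^2)^{(\mu-3)/2}=(1+\ep^2\lambda^2)^{(\mu-1)/2}(1+\ep^2\lambda^2)^{-1}$ and using $\ep\lambda^2(1+\ep^2\lambda^2)^{-1}\le\ep^{-1}$ with the two remaining factors $\le1$ gives $F(\lambda)\le\ep^{-1}\le\ep^{\mu-1}$. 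For $0<\ep\le1$, the substitution $s=\ep|\lambda|$ gives $F(\lambda)=\ep^{\mu-1}s^2(\ep^2+s^2)^{-\mu/2}(1+s^2)^{(\mu-3)/2}$, and after $(\ep^2+s^2)^{-\mu/2}\le s^{-\mu}$ the claim reduces to $s^{2-\mu}(1+s^2)^{(\mu-3)/2}\le1$, which holds for $s\le1$ (both factors are $\le1$) and for $s\ge1$ (since $(1+s^2)^{(\mu-3)/2}\le s^{\mu-3}$, so the product is $\le s^{-1}\le1$). I expect no real obstacle beyond the non-uniform convergence in $\lambda$ flagged above; everything else is routine spectral-calculus bookkeeping and elementary one-variable estimates.
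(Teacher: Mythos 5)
Your proof is correct and takes essentially the same approach as the paper: pass to the bounded Borel functional calculus for $A_\alpha$ and reduce every claim to an elementary $\sup$-norm bound on the scalar symbol. The paper's own proof is terser --- it produces the bounds $\|W_\ep\|\le1$, $\|A_\alpha W_\ep\|\le\ep^{\mu-1}$, $\|W_\ep'\|\le(1-\mu)\ep^{\mu-1}$ via the slicker multiplicative factorizations $\ep\langle\lambda\rangle^{-\mu}|\lambda|^2\langle\ep\lambda\rangle^{\mu-3}=\ep^{\mu-1}\cdot|\lambda|^\mu\langle\lambda\rangle^{-\mu}\cdot(\ep|\lambda|)^{2-\mu}\langle\ep\lambda\rangle^{\mu-2}\cdot\langle\ep\lambda\rangle^{-1}$ and $|\lambda|\langle\lambda\rangle^{-\mu}\langle\ep\lambda\rangle^{\mu-1}=\ep^{\mu-1}\cdot|\lambda|^\mu\langle\lambda\rangle^{-\mu}\cdot(\ep|\lambda|)^{1-\mu}\langle\ep\lambda\rangle^{\mu-1}$, each factor being $\le1$, whereas you obtain the same bounds by a case split on $\ep\gtrless1$ and a substitution $s=\ep|\lambda|$. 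Where your write-up adds genuine value is the continuity-in-$\ep$ and $C^1$ claims, which the paper dispatches with the single line ``the lemma follows from these estimates and the spectral decomposition theorem'': your split-at-radius-$R$ argument for uniform convergence of the symbols, and the Taylor-with-remainder plus the local bound $\sup_\lambda|\partial_\ep^2h_\ep|\le(1-\mu)(2-\mu)\ep^{-2}$, make explicit the only nontrivial point (that pointwise convergence of the symbols does not immediately give norm convergence of the operators). One small remark: your intermediate claim $\lambda^2(1+\ep^2\lambda^2)^{(\mu-3)/2}\le\ep^{-2}$ is correct, but is seen most cleanly by noting that $(\mu-3)/2\le-1$ gives $u(1+u)^{(\mu-3)/2}\le u/(1+u)<1$ with $u=\ep^2\lambda^2$.
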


\begin{proof}
It is easy to see that the symbols of $W_\ep,W_\ep'$ and $A_\alpha W_\ep$ satisfy 
\begin{align*}
\<\lambda\>^{-\mu}\<\ep \lambda\>^{\mu-1}&\le1,\\
\ep\<\lambda\>^{-\mu}|\lambda|^2\<\ep \lambda\>^{\mu-3}&=\ep^{\mu-1}\cdot |\lambda|^\mu\<\lambda\>^{-\mu}\cdot (\ep|\lambda|)^{-\mu+2}\<\ep \lambda\>^{\mu-2}\cdot \<\ep \lambda\>^{-1}\le \ep^{\mu-1},\\
|\lambda|\<\lambda\>^{-\mu}\<\ep \lambda\>^{\mu-1}&=\ep^{\mu-1}\cdot |\lambda|^\mu\<\lambda\>^{-\mu}\cdot (\ep|\lambda|)^{-\mu+1}\<\ep \lambda\>^{\mu-1}\le \ep^{\mu-1}.
\end{align*}
The lemma follows from these estimates and the spectral decomposition theorem. 
\end{proof}

\begin{lemma}[Gronwall's inequality]
\label{lemma_Gronwall}
Let $f\in C(a,b)$ and $u_1,u_2 \in L^1(a,b)$ be non-negative functions. Suppose that for some constant $A\ge0$ and for all $a<\ep<b$, 
$$
f(\ep)\le A+\int_\ep^b \Big(u_1(r)f(r)+u_2(r)f(r)^{1/2}\Big) dr. 
$$
Then, for all $a<\ep<b$, 
$$
f(\ep)\le \left\{A^{1/2}+\frac12\int_\ep^bu_2(r) \exp\left(-\frac12\int_r^b u_1(\tau)d\tau\right)dr\right\}^2\exp\left(\int_\ep^bu_1(\tau)d\tau\right).
$$
\end{lemma}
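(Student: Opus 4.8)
The plan is to carry out the standard differential-inequality trick, adapted to handle the extra $f^{1/2}$ term. First I would introduce the auxiliary function
\[
F(\ep):=A+\int_\ep^b\Big(u_1(r)f(r)+u_2(r)f(r)^{1/2}\Big)\,dr,
\]
which is absolutely continuous on $(a,b)$, satisfies $f\le F$ pointwise by hypothesis, and obeys $F'(\ep)=-\big(u_1(\ep)f(\ep)+u_2(\ep)f(\ep)^{1/2}\big)\ge -\big(u_1(\ep)F(\ep)+u_2(\ep)F(\ep)^{1/2}\big)$ since $u_1,u_2\ge0$ and $t\mapsto t^{1/2}$ is non-decreasing. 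Because $F\ge A\ge0$, the square root $G:=F^{1/2}$ is well defined, non-negative, and (on the set where $F>0$, and by a routine approximation argument $A+\delta$ in place of $A$ otherwise) differentiable with $G'=\tfrac{1}{2}F^{-1/2}F'\ge -\tfrac12\big(u_1 F^{1/2}+u_2\big)=-\tfrac12 u_1 G-\tfrac12 u_2$.

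Next I would linearize: the inequality $G'(\ep)\ge -\tfrac12 u_1(\ep)G(\ep)-\tfrac12 u_2(\ep)$ is a linear differential inequality, so multiplying by the integrating factor $\exp\!\big(\tfrac12\int_\ep^b u_1\big)$ and integrating from $\ep$ to $b$ gives
\[
G(\ep)\le \left(G(b)+\tfrac12\int_\ep^b u_2(r)\exp\!\Big(-\tfrac12\int_r^b u_1(\tau)\,d\tau\Big)dr\right)\exp\!\Big(\tfrac12\int_\ep^b u_1(\tau)\,d\tau\Big),
\]
using $G(b)=F(b)^{1/2}=A^{1/2}$. Squaring this and recalling $f(\ep)\le F(\ep)=G(\ep)^2$ yields exactly the asserted bound. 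The conclusion for the case $A=0$ (where $G$ may fail to be differentiable at points with $F=0$) follows by running the argument with $A$ replaced by $A+\delta>0$ and letting $\delta\to0$ at the end.

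The only genuine subtlety is the non-smoothness of $t\mapsto t^{1/2}$ at the origin and the fact that $u_1,u_2$ are merely $L^1$, so $F$ is only absolutely continuous and $G$ only Lipschitz on compact subsets; one must be careful that the chain rule computation for $G'=\tfrac12 F^{-1/2}F'$ and the subsequent integration of the differential inequality are valid almost everywhere and survive integration — this is handled by the $\delta$-regularization together with the fundamental theorem of calculus for absolutely continuous functions. Everything else is bookkeeping. I expect this regularity check to be the main (and essentially only) obstacle; it is the reason the statement is phrased for $f\in C(a,b)$ and $u_i\in L^1(a,b)$ rather than for smooth data.
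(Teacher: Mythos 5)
Your proposal is correct and is essentially the same proof as the paper's: you set $G=F^{1/2}$ and then apply the integrating factor $\exp(-\tfrac12\int_\ep^b u_1)$, whereas the paper first forms $h(\ep)=g(\ep)\exp(-\int_\ep^b u_1)$ and then shows $\partial_\ep\sqrt{h}\ge -\tfrac12 u_2\exp(-\tfrac12\int_\ep^b u_1)$; the two orders of operations commute and lead to the identical key differential inequality and integration. Your extra remarks about the $\delta$-regularization for the case $A=0$ and about absolute continuity are sound housekeeping that the paper leaves implicit.
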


\begin{proof}
Let $U(\ep):=u_2(\ep)\exp\left(-\frac12\int_\ep^bu_1(\tau)d\tau\right)$, $g(\ep):=A+\int_\ep^b \Big(u_1(r)f(r)+u_2(r)f(r)^{1/2}\Big) dr$ and $h(\ep):=g(\ep)\exp\left(-\int_\ep^bu_1(\tau)d\tau\right)$. It is easy to prove the inequality $\partial_\ep\sqrt{h(\ep)}\ge -U(\ep)/2$. The lemma follows by integrating this inequality over the interval $(\ep,b)$ and noticing that $f	\le g$. 
\end{proof}

\begin{proof}[Proof of Theorem \ref{theorem_abstract}]
The proof is decomposed into several steps. 

{\it Step 1}. We first compute the commutators $[\H,iA_\alpha]$ and $[[\H,iA_\alpha],iA_\alpha]$. 
By \ref{H3} and Remark\til\ref{rem:H3}, $\H$ commutes with $\varphi(\L)$ for any $\varphi\in C(\R)$ of polynomial growth. Using Lemma\til\ref{lemma_abstract_commutator_1}, we then calculate
\begin{align}
\label{theorem_abstract_proof_-2}
[\H,iA_\alpha]&=\<\L\>^{-\alpha/2}[\H,iA]\<\L\>^{-\alpha/2}=2s\H\<\L\>^{-\alpha},\\
\nonumber
[[\H,iA_\alpha],iA_\alpha]&=2s\<\L\>^{-\alpha}\H\<\L\>^{-\alpha/2}iA \<\L\>^{-\alpha/2}-2s\<\L\>^{-\alpha/2}iA\<\L\>^{-\alpha/2}\H\<\L\>^{-\alpha}\\
\nonumber
&=2s\<\L\>^{-\alpha}[\H,iA] \<\L\>^{-\alpha}+2s\<\L\>^{-\alpha}\H[\<\L\>^{-\alpha/2},iA]\<\L\>^{-\alpha/2}\\&\quad \nonumber
-2s\<\L\>^{-\alpha/2}[iA,\<\L\>^{-\alpha/2}]\H\<\L\>^{-\alpha}\\
\nonumber
&=4s^2\<\L\>^{-\alpha}\H\<\L\>^{-\alpha}-4s^2\alpha \<\L\>^{-\alpha}\H\L^2\<\L\>^{-2}\<\L\>^{-\alpha}\\
\label{theorem_abstract_proof_-1}
&=4s^2\H\<\L\>^{-2\alpha}(1 -\alpha \L^2\<\L\>^{-2})
\end{align}
as quadratic forms on $H^{2s}(\mathbb H^d)\cap D(A_\alpha)$. In particular, $[\H,iA_\alpha]$ and $[[\H,iA_\alpha],iA_\alpha]$ extend to self-adjoint operators with domains which include $H^{2s-\alpha}(\mathbb H^d)$. 
Moreover, since $\jap{\lambda}^{-\alpha}|1 -\alpha\lambda^2\<\lambda\>^{-2}|\le 1$ for any $\alpha\ge0$, \ref{H2} and \eqref{spectral_decomposition_theorem_2} imply
\begin{align}
\label{theorem_abstract_proof_0}
|\<[[\H,iA_\alpha],iA_\alpha]f,f\>|\le 4C_2s^2 \|S_\alpha f\|^2. 
\end{align}

{\it Step 2}. Let $\sigma_1\in \R$, $\sigma_2,\ep>0$ and consider the operator $\H^\pm_\ep \colon H^{2s}(\mathbb H^d)\to L^2(\mathbb H^d)$ defined as
\begin{equation}
	\label{def:Hep}
	\H^\pm_\ep := \H-(\sigma_1 \pm i\sigma_2) \mp i \ep [\H,iA_\alpha]
	= 
	\H-\sigma_1\mp i\sigma_2\mp i 2s\ep\H\<\L\>^{-\alpha}.
\end{equation}
We shall show that $\H^\pm_\ep$ is bijective and $(\H^\pm_\ep)^*=\H^\mp_\ep$. It is easy to see that $\Ker \H^\pm_\ep=\{0\}$. Indeed, since $\H$ and $\<\L\>^{-\alpha}$ are non-negative, self-adjoint and commute with each other, we have $\Im\<\H f,f\>=0$, 
\begin{align*}
&\Im\<f,\H\<\L\>^{-\alpha}f\>=\Im \<\H\<\L\>^{-\alpha/2}f,\<\L\>^{-\alpha/2}f\>=0,\\
&\Im \<\H f,\H\<\L\>^{-\alpha}f\>=\Im \<\H\<\L\>^{-\alpha/2}f,\H\<\L\>^{-\alpha/2}f\>=0,
\end{align*}
and $\ep s \sigma_2 \<\H\<\L\>^{-\alpha}f,f\> \ge0$
for $f\in H^{2s}(\mathbb H^d)$. Hence, 
\begin{align}
	\label{eq:bound_below_Hpm}
\nonumber
\|\H^\pm_\ep f\|^2
&=\|(\H-\sigma_1)f\|^2\mp 2\Im\<(\H-\sigma_1)f,(\sigma_2+2s\ep\H\<\L\>^{-\alpha})f\>+\|(\sigma_2+ 2s\ep\H\<\L\>^{-\alpha})f\|^2\\
&\ge \max\{\sigma_2^2\|f\|^2,4s^2\ep^2\|\H\<\L\>^{-\alpha}f\|^2\},
\end{align}
which particularly implies $\Ker \H^\pm_\ep=\{0\}$. 
To show $\Ran \H^\pm_\ep=L^2(\mathbb H^d)$ and $(\H^\pm_\ep)^*=\H^\mp_\ep$, we use some tools from the joint spectral theory for strongly commuting self-adjoint operators (see Appendix \ref{appendix_joint_spectral_theory} below). $\H^\pm_\ep$ is written as $\H^\pm_\ep=\varphi_1(\H)\pm i\varphi_{2,\alpha}(\H,\L)$ with two real-valued functions $\varphi_1(\lambda_1)=\lambda_1-\sigma_1$ and $\varphi_{2,\alpha}(\lambda_1,\lambda_2)=-\sigma_2-2\ep s\lambda_1\<\lambda_2\>^{-\alpha}$. Since $|(\varphi_1\pm i\varphi_{2,\alpha})(\lambda_1,\lambda_2)|^2=|\varphi_1(\lambda_1)|^2+|\varphi_{2,\alpha}(\lambda_1,\lambda_2)|^2$, we have
$$
\|(\varphi_1\pm i\varphi_{2,\alpha})(\H,\L)f\|^2=\|\varphi_1(\H)f\|^2+\|\varphi_{2,\alpha}(\H,\L)f\|^2=\|\H^\pm_\ep f\|^2.
$$
Since $(\varphi_1\pm i\varphi_{2,\alpha})(\H,\L)$ is the closure of $\varphi_1(\H)\pm i\varphi_{2,\alpha}(\H,\L)$ by Proposition \ref{proposition_joint_1}, this equality shows $D(\H^\pm_\ep)=D((\varphi_1\pm i\varphi_{2,\alpha})(\H,\L))$ and thus $\H^\pm_\ep=(\varphi_1\pm i\varphi_{2,\alpha})(\H,\L)$. In particular, $\H^\pm_\ep$ is closed and 
$$
(\H^\pm_\ep)^*=[(\varphi_1\pm i\varphi_{2,\alpha})(\H,\L)]^*=\overline{(\varphi_1\pm i\varphi_{2,\alpha})}(\H,\L)=(\varphi_1\mp i\varphi_{2,\alpha})(\H,\L)=\H^\mp_\ep
$$
Thus, $\Ran\H^\pm_\ep=(\Ker \H^\mp_\ep)^\perp=\{0\}$ and $\H^\pm_\ep$ is bijective. 


Let $G^\pm_\ep=(\H^\pm_\ep)^{-1} \colon L^2(\mathbb H^d)\to H^{2s}(\mathbb H^d)$ be the inverse of $\H^\pm_\ep$ such that $(G^\pm_\ep)^*=G^\mp_\ep$ and 
\begin{align}
\label{eq:G_ep}
\|G^\pm_\ep\|\le \sigma_2^{-1}
\end{align}
by \eqref{eq:bound_below_Hpm}. By the duality and interpolation, we also have $G^\pm_\ep\in 
\mathbb B(H^{-s}(\mathbb H^d),H^{s}(\mathbb H^d))$. 
%

{\it Step 3}. Let $a_+=\max\{a,0\}$, $\delta\ge0$ and recall the definition of $W_\ep$ from Lemma\til\ref{lemma_W}. Let us denote 
\begin{align*}
S_{\alpha,\delta}&:=S_\alpha\<\delta \L\>^{- (s-\alpha)_+/2}=\L^{s/2}\<\L\>^{-\alpha/2}\<\delta \L\>^{- (s-\alpha)_+/2},\\
F^\pm_\ep&:=W_\ep S_{\alpha,\delta} G^\pm_\ep S_{\alpha,\delta} W_\ep .
\end{align*}
Note that $S_{\alpha,\delta}\in \mathbb B(L^2(\mathbb H^d))$ if $\delta>0$, $S_{\alpha,0}=S_\alpha$ and thus $D(S_{\alpha,\delta})\subset H^s(\mathbb H^d)$ for all $\delta\ge0$, and $F^\pm_\ep\in\mathbb B(L^2(\mathbb H^d))$ for all $\sigma_1\in\R$, $\ep,\sigma_2>0$ and $\delta\ge0$.  We shall derive a uniform bound of $\|F^\pm_\ep\|$ with respect to $\sigma_1\in \R$, $\sigma_2>0$, $\ep>0$ and sufficiently small $\delta\ge0 $. To this end, we first show that $F^\pm_\ep$ satisfies
\begin{align}
\label{theorem_abstract_proof_2}
\pm i \partial_\ep F^\pm_\ep=I_1+I_2+I_3+I_4,
\end{align}
where $\partial_\ep F^\pm_\ep=\lim_{h\to0}h^{-1}(F^\pm_{\ep+h}-F^\pm_{\ep})$ and 
\begin{align*}
I_1&=iF^\pm_\ep A_\alpha-iA_\alpha F^\pm_\ep,\\
I_2&=\pm iW_\ep ' S_{\alpha,\delta} G^\pm_\ep   S_{\alpha,\delta} W_\ep \pm iW_\ep S_{\alpha,\delta} G^\pm_\ep   S_{\alpha,\delta} W_\ep ',\\
I_3&=-W_\ep D_\delta S_{\alpha,\delta} G^\pm_\ep S_{\alpha,\delta} W_\ep-W_\ep S_{\alpha,\delta} G^\pm_\ep S_{\alpha,\delta} D_\delta W_\ep,\\
I_4&=\mp i\ep W_\ep S_{\alpha,\delta} G^\pm_\ep [[\H,iA_\alpha],iA_\alpha] G^\pm_\ep S_{\alpha,\delta} W_\ep
\end{align*}
with
$$
D_\delta:= \psi(\L),
\qquad
\psi(\lambda):=
s\<\lambda\>^{-\alpha}\left(s-\alpha\lambda^2\<\lambda\>^{-2}-(s-\alpha)_+\delta^2\lambda^2\<\delta\lambda\>^{-2}\right). 
$$
By elementary calculus it is easy to see that, if $s\ge\alpha$, then $0<s\<\lambda\>^{-\alpha}\left(s-\alpha\lambda^2\<\lambda\>^{-2}\right) \le s^2$ 
and
$0\le s(s-\alpha)_+ \<\lambda\>^{-\alpha} (\delta\lambda)^2\<\delta\lambda\>^{-2} \le o(\delta)$ for $\delta\searrow0$,
so that $|\psi(\lambda)|\le s^2$ if $\delta$ is small enough.
If $s<\alpha$, then 
$$s^2 \ge s\<\lambda\>^{-\alpha}\left(s-\alpha\lambda^2\<\lambda\>^{-2}\right) \ge -2\left(\frac{\alpha-s}{\alpha+2}\right)^{1+\alpha/2} > -2s e^{-1-s/2}$$
and so $|\psi(\lambda)| \le s \max\{s,2e^{-1-s/2}\}$. Therefore by using \eqref{spectral_decomposition_theorem_2} we have that $D_\delta\in \mathbb B(L^2(\mathbb H^d))$ and 
\begin{align}
\label{theorem_abstract_proof_3}
\|D_\delta\| \le s\max\{s,2e^{-(1+s/2)}\}
\end{align}
for $\delta$ sufficiently small.
To prove \eqref{theorem_abstract_proof_2}, we first observe that the resolvent equation
\begin{align*}
G^\pm_{\ep+h}-G^\pm_{\ep}=\pm ih G^\pm_{\ep+h}[\H,iA_\alpha] G^\pm_\ep=\pm 2ish G^\pm_{\ep+h}\H\<\L\>^{-\alpha} G^\pm_\ep
\end{align*}
holds whenever $\ep>0$ and $\ep+h>0$, where the right hand side makes sense as a map from $\Ran S_{\alpha,\delta}$ to $D(S_{\alpha,\delta})$ thanks to the properties $G^\pm_\ep\in\mathbb B(H^{-s}(\mathbb H^d),H^{s}(\mathbb H^d))$ and $D(S_{\alpha,\delta}),D(\<\mathcal H\>^{1/2})\subset H^{s}(\mathbb H^d)$.  Hence, the map $(0,\infty)\ni \ep\mapsto F^\pm_\ep\in \mathbb B(L^2(\mathbb H^d))$ is $C^1$ and satisfies
\begin{align}
\label{theorem_abstract_proof_4}
\partial_\ep F^\pm_\ep&=\pm i W_\ep S_{\alpha,\delta} G^\pm_\ep    [\H,iA_\alpha]  G^\pm_\ep   S_{\alpha,\delta}  W_\ep + W_\ep ' S_{\alpha,\delta} G^\pm_\ep   S_{\alpha,\delta} W_\ep +W_\ep S_{\alpha,\delta} G^\pm_\ep   S_{\alpha,\delta}  W_\ep '\in \mathbb B(L^2(\mathbb H^d)).
\end{align}

We next compute the operator
$
J:=W_\ep S_{\alpha,\delta} [G^\pm_\ep,iA_\alpha]S_{\alpha,\delta}  W_\ep 
$ 
in the following two ways. On one hand, it holds that
\begin{align}
\label{theorem_abstract_proof_5}
W_\ep S_{\alpha,\delta} [G^\pm_\ep,iA_\alpha]S_{\alpha,\delta}  W_\ep =W_\ep S_{\alpha,\delta} G^\pm_\ep [iA_\alpha,\H^\pm_\ep] G^\pm_\ep S_{\alpha,\delta}  W_\ep \in \mathbb B(L^2(\mathbb H^d)). 
\end{align}
Indeed, setting for short $\H^\pm_\ep(\tau)=e^{-i\tau A_\alpha}\H^\pm_\ep e^{i\tau A_\alpha}$ and $G^\pm_\ep(\tau)=e^{-i\tau A_\alpha}G^\pm_\ep e^{i\tau A_\alpha}$, and recalling that $G^\pm_\ep=(\H^\pm_\ep)^{-1}$, we have
$$
G^\pm_\ep(\tau)=G^\pm_\ep-G^\pm_\ep(\tau)\{\H^\pm_\ep(\tau)-\H^\pm_\ep\}G^\pm_\ep,
$$
which implies
\begin{align}
\label{theorem_abstract_proof_5_0}
[G^\pm_\ep,iA_\alpha]=\frac{d}{d\tau}G^\pm_\ep(\tau)\Big|_{\tau=0}=-G^\pm_\ep\frac{d}{d\tau}\H^\pm_\ep(\tau)\Big|_{\tau=0}G^\pm_\ep=G^\pm_\ep [iA_\alpha,\H^\pm_\ep] G^\pm_\ep. 
\end{align}
By \eqref{def:Hep}, \eqref{theorem_abstract_proof_-2} and \eqref{theorem_abstract_proof_-1}, $[iA_\alpha,\H^\pm_\ep]$ is of the form
\begin{align}
\label{theorem_abstract_proof_5_1}
[iA_\alpha,\H^\pm_\ep]=-[\H,iA_\alpha]\pm i\ep [[\H,iA_\alpha],iA_\alpha]=-2s\H\<\L\>^{-\alpha}\pm 4is^2\ep \H\<\L\>^{-2\alpha}(1 -\alpha \L^2\<\L\>^{-2}).
\end{align}
In particular, $[iA_\alpha,\H^\pm_\ep]\in \mathbb B(H^{s}(\mathbb H^d),H^{-s}(\mathbb H^d))$ by \ref{H2}. 
These formulas \eqref{theorem_abstract_proof_5_0} and \eqref{theorem_abstract_proof_5_1}, combined with the same argument as that for $G_{\ep+h}^\pm-G^\pm_\ep$, 
thus show \eqref{theorem_abstract_proof_5}. 
By \eqref{theorem_abstract_proof_5} and \eqref{theorem_abstract_proof_5_1}, we have
\begin{align*}
J 
&=W_\ep S_{\alpha,\delta} G^\pm_\ep[iA_\alpha,\H^\pm_\ep]G^\pm_\ep S_{\alpha,\delta}  W_\ep \\
&=-W_\ep S_{\alpha,\delta} G^\pm_\ep [\H,iA_\alpha] G^\pm_\ep S_{\alpha,\delta}  W_\ep \pm i\ep W_\ep S_{\alpha,\delta} G^\pm_\ep [[\H,iA_\alpha],iA_\alpha]G^\pm_\ep S_{\alpha,\delta}  W_\ep \\
&=\pm i \partial_\ep F^\pm_\ep \mp i\left(W_\ep 'S_{\alpha,\delta} G^\pm_\ep   S_{\alpha,\delta}  W_\ep +W_\ep S_{\alpha,\delta} G^\pm_\ep   S_{\alpha,\delta}  W_\ep '\right)\\
&\quad\quad \pm i\ep W_\ep S_{\alpha,\delta} G^\pm_\ep [[\H,iA_\alpha],iA_\alpha]G^\pm_\ep S_{\alpha,\delta}  W_\ep\\
&=\pm i \partial_\ep F^\pm_\ep-I_2-I_4.
\end{align*}
On the other hand, since Lemma \ref{lemma_abstract_commutator_1} implies
%
%
\begin{align*}
[S_{\alpha,\delta},iA_\alpha]
&=\<\L\>^{-\alpha/2}[S_{\alpha,\delta},iA]\<\L\>^{-\alpha/2}\\
&=2s\L \<\L\>^{-\alpha}\frac{d}{d\lambda}\left\{\lambda^{s/2}\<\lambda\>^{-\alpha/2}\<\delta\lambda\>^{-(s-\alpha)_+/2}\right\}\Big|_{\lambda=\L}\\
&=s S_{\alpha,\delta} \<\L\>^{-\alpha}\left(s-\alpha\L^2\<\L\>^{-2}-(s-\alpha)_+\delta^2\L^2\<\delta\L\>^{-2}\right)
\\
&=S_{\alpha,\delta} D_\delta= D_\delta S_{\alpha,\delta}
\end{align*}
with $D_\delta$ defined above, we compute $J$ as a quadratic form on $L^2(\mathbb H^d)$ as 
\begin{align*}
J&=W_\ep S_{\alpha,\delta} G^\pm_\ep iA_\alpha S_{\alpha,\delta} W_\ep - W_\ep S_{\alpha,\delta} iA_\alpha G^\pm_\ep S_{\alpha,\delta} W_\ep \\
&= W_\ep S_{\alpha,\delta} G^\pm_\ep S_{\alpha,\delta} iA_\alpha W_\ep  + W_\ep S_{\alpha,\delta} G^\pm_\ep[iA_\alpha,S_{\alpha,\delta}] W_\ep \\
&\quad\quad - W_\ep iA_\alpha S_{\alpha,\delta} G^\pm_\ep S_{\alpha,\delta} W_\ep - W_\ep [S_{\alpha,\delta}, iA_\alpha] G^\pm_\ep S_{\alpha,\delta} W_\ep \\
&=iF^\pm_\ep A_\alpha-iA_\alpha F^\pm_\ep-W_\ep S_{\alpha,\delta} G^\pm_\ep S_{\alpha,\delta} D_\delta W_\ep - W_\ep D_\delta S_{\alpha,\delta} G^\pm_\ep S_{\alpha,\delta} W_\ep\\
&=I_1+I_3.
\end{align*}
Now \eqref{theorem_abstract_proof_2} follows from these two formulas for $J$. 

{\it Step 4}. Using \eqref{theorem_abstract_proof_2}, we shall derive the following integral inequality: 
\begin{align}
\label{theorem_abstract_proof_6}
\|F^\pm_\ep\|\le \|F^\pm_b\|+\int_{\ep}^b\left(u_1 \|F^\pm_\tau\|+u_2(\tau)\|F^\pm_\tau\|^{1/2}\right)d\tau,
\end{align}
where $0<\ep<b<\infty$, $u_1=2C_1^{-1}C_2s$ and
$$
u_2(\tau)=2^{1/2}C_1^{-1/2} \max\{s,2e^{-(1+s/2)}\} s^{1/2} \tau^{-1/2}+2^{1/2}(2-\mu)(C_1s)^{-1/2}\tau^{\mu-3/2}. 
$$
For $f\in L^2(\mathbb H^d)$ and letting $g:=G^\pm_\ep S_{\alpha,\delta}  W_\ep f\in H^s(\mathbb H^d)$ for short , we know by the bounds $\|W_\ep \|\le1$ and $\|\<\delta \L\>^{-(s-\alpha)_+/2}\|\le1$, \ref{H2}, \eqref{def:Hep} and the fact $G^\pm_\ep=(\mathcal H^\pm_\ep)^{-1}$ that
\begin{align*}
\|F^\pm_\ep f\|^2
&= \|W_\ep S_{\alpha,\delta} G^\pm_\ep S_{\alpha,\delta}  W_\ep f\|^2\\
&\le \|S_{\alpha} g\|^2\\
&=\<\L^{s/2}\<\L\>^{-\alpha/2}g,\L^{s/2}\<\L\>^{-\alpha/2}g\>\\
&\le C_1^{-1}\<\H\<\L\>^{-\alpha/2}g,\<\L\>^{-\alpha/2}g\>\\
&=(2C_1s)^{-1}\<[\H,iA]g,g\>\\
&\le \mp (2C_1s\ep)^{-1} \Im \<\H^\pm_\ep g,g\>\\
&=\mp (2C_1s\ep)^{-1}\Im\<f,F^\pm_\ep f\>\\
&\le (2C_1s\ep)^{-1}\|F^\pm_\ep f\| \|f\|,
\end{align*}
 where we also used the condition $\sigma_2\ge0$ in the sixth line. In particular, 
\begin{align}
\label{theorem_abstract_proof_7}
\|F^\pm_\ep\|&\le (2C_1s\ep)^{-1},\\
\label{theorem_abstract_proof_8}
\|S_{\alpha,\delta} G^\pm_\ep S_{\alpha,\delta}W_\ep \|&\le (2C_1s\ep)^{-1/2}\|F^\pm_\ep\|^{1/2}. 
\end{align}
Moreover, Lemma \ref{lemma_W}, the bounds \eqref{theorem_abstract_proof_3} and \eqref{theorem_abstract_proof_8}, and the duality argument show
\begin{align*}
\|I_1\|&\le \|F^\pm_\ep A_\alpha\|+\|A_\alpha F^\pm_\ep\|\le 2\|W_\ep S_{\alpha,\delta}  G^\pm_\ep S_{\alpha,\delta} \| \| W_\ep A_\alpha\|\\
&\le 2^{1/2}(C_1s)^{-1/2}\ep^{\mu-3/2}\|F^\pm_\ep\|^{1/2},\\
\|I_2\|&\le \|W_\ep 'S_{\alpha,\delta} G^\pm_\ep   S_{\alpha,\delta}  W_\ep \|+\|W_\ep S_{\alpha,\delta} G^\pm_\ep   S_{\alpha,\delta}  W_\ep '\|\le 2\|W_\ep '\|\|S_{\alpha,\delta} G^\pm_\ep   S_{\alpha,\delta}  W_\ep \|\\
&\le 2^{1/2}(1-\mu)(C_1s)^{-1/2}\ep^{\mu-3/2}\|F^\pm_\ep\|^{1/2},\\
\|I_3\|&\le \|W_\ep S_{\alpha,\delta} G^\pm_\ep S_{\alpha,\delta} D_\delta W_\ep \|+\|W_\ep D_\delta S_{\alpha,\delta} G^\pm_\ep S_{\alpha,\delta} W_\ep \|\le 2\|W_\ep \|\| D_\delta \| \|S_{\alpha,\delta} G^\pm_\ep S_{\alpha,\delta} W_\ep \|\\
&\le 2^{1/2}C_1^{-1/2} \max\{s,2e^{-(1+s/2)}\} s^{1/2} \ep^{-1/2}\|F^\pm_\ep\|^{1/2}. 
\end{align*}
We also know by \eqref{theorem_abstract_proof_0} and \eqref{theorem_abstract_proof_8} that
\begin{align*}
\|I_4\|&=\ep \|W_\ep S_{\alpha,\delta} G^\pm_\ep [[\H,iA_\alpha],iA_\alpha] G^\pm_\ep S_{\alpha,\delta} W_\ep \|\\
&\le 4C_2 s^2 \ep \|W_\ep S_{\alpha,\delta} G^\pm_\ep S_\alpha\|\|S_\alpha G^\pm_\ep S_{\alpha,\delta} W_\ep \|\\
&\le 2C_1^{-1}C_2 s \|F^\pm_\ep\|.
\end{align*}
Applying these four bounds for $I_1$,...,$I_4$ to \eqref{theorem_abstract_proof_2} imply 
\begin{align}
\label{theorem_abstract_proof_9}
\|\partial_\ep F^\pm_\ep\|\le u_1 \|F^\pm_\ep\|+u_2(\ep)\|F^\pm_\ep\|^{1/2}
\end{align}
and hence 
	from this and the fundamental theorem of calculus
	\begin{equation*}
		F^\pm_\ep = F^\pm_b - \int_\ep^b \partial_\tau F^\pm_\tau \, d\tau
	\end{equation*}
the inequality \eqref{theorem_abstract_proof_6} follows. Since $\mu>1/2$ and thus $u_2\in L^1(0,b)$ for any $b>0$, we can apply Lemma~\ref{lemma_Gronwall} to obtain
\begin{equation*}
\begin{aligned}
\|F^\pm_\ep\|
&\le \left\{ \|F^\pm_b\|^{1/2} + \frac{1}{2} \int_{\ep}^bu_2(\tau) e^{-\frac{u_1}{2}(b-\tau)} d\tau\right\}^2e^{(b-\ep)u_1}\\
&\le \left\{ (2C_1sb)^{-1/2} e^{\frac{u_1}{2}b}+ \frac{1}{2} \int_{0}^bu_2(\tau) e^{\frac{u_1}{2}\tau} d\tau\right\}^2\\
&=
\left\{
(2C_1sb)^{-1/2}
+
(2C_1^{-1}s)^{1/2} \max\{s,2e^{-(1+s/2)}\} \left(\frac{2}{u_1}\right)^{1/2} D\left(2, \left(\frac{u_1}{2}b \right)^{1/2} \right) 
\right.
\\&\qquad
+
\left.
2(2-\mu)(2C_1s)^{-1/2} \frac{1}{2\mu-1} \left(\frac{2}{u_1}\right)^{\mu-1/2} D\left(\frac{2}{2\mu-1} , \left(\frac{u_1}{2}b \right)^{\mu-1/2} \right)
\right\}^2
e^{u_1 b}
\end{aligned}
\end{equation*}
for any $0<\ep<b$, where $D(p,x)$ is the generalized Dawson's integral \eqref{def:dawson}, and we used \eqref{theorem_abstract_proof_7} with $\ep=b$ and a change of variables.
Setting $\widetilde{b}=\frac{u_1}{2}b$ and taking the infimum over $\widetilde{b}>0$, we have
\begin{align}
\label{theorem_abstract_proof_10}
\|F^\pm_\ep\|\le \kappa(\H,s,\mu),
\end{align}
for any $\sigma_1\in \R$, $\sigma_2>0$, $\ep>0$ and sufficiently small $\delta\ge0 $ with $\kappa(\H,s,\mu)$ given by \eqref{def:kappa}. 

{\it Step 5}. 
Plugging \eqref{theorem_abstract_proof_10} to \eqref{theorem_abstract_proof_9} implies $\|\partial_\ep F^\pm_\ep\|=O(\ep^{\mu-3/2})$. Since $\mu>1/2$ and thus $\ep^{\mu-3/2}\in L^1((0,1])$, the map $(0,\infty)\ni \ep\mapsto F^\pm_\ep\in \mathbb B(L^2(\mathbb H^d))$ is H\"older continuous of order $\mu-1/2$, so the limit $F_0^\pm:= \lim_{\ep\searrow0}F^\pm_\ep\in \mathbb B(L^2(\mathbb H^d))$ exists. Setting $G^\pm_0:=(\H-\sigma_1\mp i\sigma_2)^{-1}$, we have the resolvent equation 
$$
G^\pm_\ep-G^\pm_0=\pm is\ep G^\pm_\ep\<\L\>^{-\alpha} \H G^\pm_0
$$
and thus,
%
%
by the spectral theorem and \eqref{eq:G_ep}, 
\begin{align*}
	\|G^\pm_\ep-G^\pm_0\|\le s \ep\|G_\ep^\pm\|\|\<\L\>^{-\alpha}\|\|\H G_0^\pm\|\le s\ep \sigma_2^{-1}\sup_{\lambda\ge0}|\lambda(\lambda-\sigma_1\mp i\sigma_2)^{-1}| = s\ep \sigma_2^{-1} \< \sigma_1 \sigma_2^{-1} \>.
\end{align*}
This shows $F_0^\pm=
\<A_\alpha\>^{-\mu}S_{\alpha,\delta}G^\pm_0 S_{\alpha,\delta}\<A_\alpha\>^{-\mu}
$ for each $\sigma_1\in \R$ and $\sigma_2>0$. As $\ep\searrow0$ in \eqref{theorem_abstract_proof_10}, we thus have
\begin{align}
\label{theorem_abstract_proof_11}
\|\<A_\alpha\>^{-\mu}S_{\alpha,\delta}(\H-\sigma_1\mp i\sigma_2)^{-1}S_{\alpha,\delta}\<A_\alpha\>^{-\mu}\|\le \kappa(\H,s,\mu).
\end{align}
Finally, letting $\delta\searrow0$, we find for any $\sigma\in \C\setminus\R$, $$\<A_\alpha\>^{-\mu}S_{\alpha,\delta}(\H-\sigma)^{-1}S_{\alpha,\delta}\<A_\alpha\>^{-\mu}\to \<A_\alpha\>^{-\mu}S_\alpha(\H-\sigma)^{-1}S_\alpha\<A_\alpha\>^{-\mu}$$ strongly on $L^2(\mathbb H^d)$ since $\<\delta \L\>^{- (s-\alpha)_+/2}\to 1$ strongly and $\|S_\alpha(\H-\sigma)^{-1}S_\alpha\|\lesssim \<\sigma\>|\Im\sigma|^{-1}$ by \ref{H2} and the spectral theorem.  The desired estimate \eqref{theorem_abstract_1} thus follows by letting $\delta\searrow 0$ in \eqref{theorem_abstract_proof_11}. 
\end{proof}

\section{Proof of the main theorems}\label{sec:proof_thm}
This section is devoted to the proof of Theorems \ref{theorem_resolvent_general} and \ref{theorem_resolvent_radial}. We first prepare a few lemmas. 

\begin{lemma}[Hardy's type inequalities]
\label{lemma_Hardy}
The following statements hold
\begin{itemize}
\item For $d\ge1$ and $f\in H^1(\mathbb H^d)$,
\begin{align}
\label{lemma_Hardy_1}
\||(z,t)|_{\mathbb H}^{-2}|z|f\|\le d^{-1}\|\L^{1/2}f\|.
\end{align}
\item For $d\ge2$ and $f\in H^1(\mathbb H^d)$,
\begin{align}
\label{lemma_Hardy_2}
\||z|^{-1}f\|\le (d-1)^{-1}\|\L^{1/2}f\|.
\end{align}
\end{itemize}
\end{lemma}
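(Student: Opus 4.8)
The plan is to derive both Hardy-type inequalities from the identity $\L=-\sum_j(X_j^2+Y_j^2)$ via an integration-by-parts argument with a suitable vector field, exactly as in the classical Euclidean Hardy inequality but adapted to the horizontal gradient $\grad$. For the first inequality \eqref{lemma_Hardy_1}, I would test $\L$ against $f$ and write $\|\L^{1/2}f\|^2=\|\grad f\|^2=\int_{\mathbb H^d}|\grad f|^2\,dzdt$. Then I would look for a horizontal vector field $V=\grad\phi$ for an appropriate real function $\phi$ (here $\phi$ should be comparable to a power of the Koranyi norm) and use the pointwise inequality $|\grad f|^2\ge 2\Re(\overline{f}\,V\cdot\grad f)-|f|^2|V|^2$, which upon integration by parts turns into $\|\grad f\|^2\ge \int(\div_{\mathbb H}V-|V|^2)|f|^2$. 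The key computation is then to choose $\phi$ so that $\div_{\mathbb H}V-|V|^2$ dominates a multiple of $|(z,t)|_{\mathbb H}^{-4}|z|^2$; the natural choice is $V=-c\,\grad\log|(z,t)|_{\mathbb H}$, and the constant $d^{-1}$ should emerge from optimizing $c$ together with the algebraic identities $|\grad |(z,t)|_{\mathbb H}|^2 = |z|^2/|(z,t)|_{\mathbb H}^2$ and $\L\log|(z,t)|_{\mathbb H} = (\text{something})\cdot|z|^2/|(z,t)|_{\mathbb H}^4$, which follow from \eqref{sublaplacian_2} after a direct (if tedious) calculation.

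For the second inequality \eqref{lemma_Hardy_2}, the situation is simpler because it only involves the horizontal variable $z\in\R^{2d}$: I would exploit that $\grad$ contains the full Euclidean gradient $\partial_z$ in its principal part, and more precisely that $|\grad f|^2\ge|\partial_z f|^2$ pointwise (since $\grad=\partial_z+2z^\perp T$ and the cross terms integrate to zero by antisymmetry, or one can simply note $\sum_j(X_j^2+Y_j^2)$ has $\partial_z^2$ as its ``elliptic part'' and the remaining terms $4NT+4|z|^2T^2$ are non-negative in the form sense after integration by parts in $t$). Then \eqref{lemma_Hardy_2} reduces to the standard Euclidean Hardy inequality on $\R^{2d}$ applied in the $z$-variables for each fixed $t$, namely $\||z|^{-1}g\|_{L^2(\R^{2d})}\le (d-1)^{-1}\|\partial_z g\|_{L^2(\R^{2d})}$ (the constant being $\tfrac{2}{2d-2}=\tfrac{1}{d-1}$, the reciprocal of the sharp Euclidean constant $\tfrac{n-2}{2}$ with $n=2d$), followed by integration in $t$. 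This explains the dimensional restriction $d\ge2$: one needs $2d>2$ for the Euclidean Hardy inequality to hold. The approximation by Schwartz (or $C_c^\infty$) functions is routine and handles the passage from a dense class to all of $H^1(\mathbb H^d)$.

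The main obstacle I anticipate is the bookkeeping in the first inequality: verifying the exact constant $d^{-1}$ requires carefully computing $\L$ applied to powers of the Koranyi norm $|(z,t)|_{\mathbb H}=(|z|^4+t^2)^{1/4}$ using the explicit form $\Delta_{\mathbb H}=\partial_z^2+4NT+4|z|^2T^2$, and noting that the term $N|(z,t)|_{\mathbb H}$ vanishes because $|(z,t)|_{\mathbb H}$ depends on $z$ only through $|z|$ while $N$ is a rotation generator. One should find that $|(z,t)|_{\mathbb H}^{2-Q}$ (with $Q=2d+2$) is, up to a constant, $\L$-harmonic away from the origin — this is precisely the fundamental solution of Folland — and the Hardy inequality with its sharp constant then follows from the ground-state substitution $f=|(z,t)|_{\mathbb H}^{(2-Q)/2}h$ combined with $\|\grad f\|^2 = \int |\grad|(z,t)|_{\mathbb H}^{(2-Q)/2}|^2|h|^2 + \int|(z,t)|_{\mathbb H}^{2-Q}|\grad h|^2 \ge (\tfrac{Q-2}{2})^2\int \tfrac{|z|^2}{|(z,t)|_{\mathbb H}^4}|f|^2$, with $\tfrac{Q-2}{2}=d$. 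Since this is a known result (D'Ambrosio-type Hardy inequalities on the Heisenberg group, as the paper itself notes when citing ``D'Ambrosio-type''), I would most likely just cite the relevant reference and indicate the ground-state-substitution proof in a line or two rather than reproduce the full computation.
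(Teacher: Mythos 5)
The paper's own ``proof'' is a bare citation: \eqref{lemma_Hardy_1} is attributed to Garofalo--Lanconelli and \eqref{lemma_Hardy_2} to D'Ambrosio. Your sketch for \eqref{lemma_Hardy_1} is essentially the Garofalo--Lanconelli ground-state substitution with the Koranyi norm (using that $|(z,t)|_{\mathbb H}^{2-Q}$ is $\L$-harmonic off the origin and $N|(z,t)|_{\mathbb H}=0$, so the cross term disappears); that part is sound and matches the cited reference.

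Your argument for \eqref{lemma_Hardy_2} has a genuine gap. You claim $|\grad f|^2\ge|\partial_z f|^2$ pointwise (false on its face, since $\grad f=\partial_z f+2z^{\perp}Tf$ and the two vectors can nearly cancel), and then fall back on the weaker claim that the cross term $-4\langle NTf,f\rangle$ integrates to zero, or equivalently that $-4NT-4|z|^2T^2\ge0$ as a form. Neither is true. Writing $\|\grad f\|^2=\|\partial_z f\|^2-4\langle NTf,f\rangle+4\||z|Tf\|^2$ and taking the partial Fourier transform in $t$, the last two terms become $4\lambda\,\mathrm{Im}\langle Nf^{\lambda},f^{\lambda}\rangle+4\lambda^2\||z|f^{\lambda}\|^2$. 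For $d=1$, $\lambda>0$ and $g(z)=\bar w^{\,n}e^{-\lambda|z|^2}$ ($w=x+iy$) one has $Ng=ing$ and $\lambda\||z|g\|^2=\tfrac{n+1}{2}\|g\|^2$, so these terms sum to $(2-2n)\lambda\|g\|^2<0$ whenever $n\ge2$; a smooth $t$-cutoff localizing near one frequency then gives an honest $f\in\S(\mathbb H^d)$ with $\|\partial_z f\|>\|\grad f\|$. Thus the reduction ``pass to $\partial_z$ and apply Euclidean Hardy fibre-by-fibre'' does not go through as stated. The step can be salvaged, but only with an extra ingredient you did not invoke: after Fourier transform in $t$ the horizontal gradient becomes the magnetic gradient $\partial_z+2i\lambda z^{\perp}$, and the \emph{diamagnetic inequality} $|\partial_z|f^{\lambda}||\le|(\partial_z+2i\lambda z^{\perp})f^{\lambda}|$ a.e.\ lets you apply the Euclidean Hardy inequality to $|f^{\lambda}|$ rather than to $f^{\lambda}$ itself; that chain does give \eqref{lemma_Hardy_2} with the constant $(d-1)^{-1}$. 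Without the diamagnetic inequality (or D'Ambrosio's own multiplier argument), your comparison of $\|\partial_z f\|$ with $\|\grad f\|$ is simply false, and the proof of \eqref{lemma_Hardy_2} is incomplete.
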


\begin{proof}
See \cite[Corollary 2.1]{GL} for \eqref{lemma_Hardy_1} and \cite[Theorems 2.2 and 3.6]{DAm} for \eqref{lemma_Hardy_2}, respectively. 
\end{proof}

\begin{remark}
The constants in \eqref{lemma_Hardy_1} and \eqref{lemma_Hardy_2} are known to be sharp. 
\end{remark}

\begin{lemma}
\label{lemma_XYT}
For $d\ge1$ and $f\in H^{1}(\mathbb H^d)$ and $g\in D(N|z|^{-1})\cap H^1(\mathbb H^d)$, 
\begin{align}
\label{lemma_XYT_1}
\|\nabla_{\mathbb H}f\|\le \|\L^{1/2}f\|,\quad \||z|Tg\|\le \frac12\|\L^{1/2} g\|+\frac12\|N|z|^{-1}g\|.
\end{align}
In particular, if $d\ge2$ and $f\in H^{1}(\mathbb H^d)$ then
\begin{align}
\label{lemma_XYT_2}
\|\<N\>^{-1}|z|Tf\|\le \frac{d}{2(d-1)}\|\L^{1/2} f\|. 
\end{align}
Moreover, for $d\ge1$ and $f\in H^2(\mathbb H^d)$, 
\begin{align}
\label{lemma_XYT_3}
\|Tf\|\le \frac 12 \|\L f\|.
\end{align}
\end{lemma}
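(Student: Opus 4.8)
The plan is to dispatch the three displays of \eqref{lemma_XYT_1}--\eqref{lemma_XYT_3} in turn, using only the algebraic relations \eqref{sublaplacian_2} and \eqref{eq:commutators} together with the Hardy inequality \eqref{lemma_Hardy_2}. The first bound in \eqref{lemma_XYT_1} is in fact an equality: since $X_j^{*}=-X_j$ and $Y_j^{*}=-Y_j$ one has $\mathcal L=\nabla_{\mathbb H}^{*}\nabla_{\mathbb H}$, hence $\|\nabla_{\mathbb H}f\|^{2}=\langle\mathcal L f,f\rangle=\|\mathcal L^{1/2}f\|^{2}$, first on $\mathcal S(\mathbb H^{d})$ and then on $H^{1}(\mathbb H^{d})$ by density.

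For the second bound in \eqref{lemma_XYT_1} the key remark is that contracting $\nabla_{\mathbb H}=\partial_z+2z^{\perp}T$ from \eqref{sublaplacian_2} with $z^{\perp}$ produces $z^{\perp}\!\cdot\!\nabla_{\mathbb H}=N+2|z|^{2}T$, i.e. $2|z|^{2}T=z^{\perp}\!\cdot\!\nabla_{\mathbb H}-N$. I would work first with $g\in\mathcal S(\mathbb H^{d})$ --- where $Ng$ vanishes to first order on the $t$-axis, so that $|z|^{-1}Ng=(N|z|^{-1})g$ is bounded near $\{z=0\}$ and lies in $L^{2}$ --- and write
\[
\||z|Tg\|^{2}=\langle |z|^{2}Tg,Tg\rangle=\tfrac12\langle z^{\perp}\!\cdot\!\nabla_{\mathbb H}g,Tg\rangle-\tfrac12\langle Ng,Tg\rangle,
\]
insert $|z|^{-1}\cdot|z|$ in each pairing so as to expose $|z|Tg$, use $[N,|z|^{-1}]=0$ (so $Ng=|z|\,(N|z|^{-1})g$) and the pointwise bound $\bigl||z|^{-1}z^{\perp}\!\cdot\!\nabla_{\mathbb H}g\bigr|\le|\nabla_{\mathbb H}g|$ (valid because $|z^{\perp}|=|z|$), and conclude by Cauchy--Schwarz that $\||z|Tg\|^{2}\le\tfrac12\bigl(\|\nabla_{\mathbb H}g\|+\|(N|z|^{-1})g\|\bigr)\||z|Tg\|$; dividing by $\||z|Tg\|$ gives the estimate on $\mathcal S(\mathbb H^d)$, and a standard cutoff/density argument extends it to $g\in D(N|z|^{-1})\cap H^{1}(\mathbb H^d)$ (in particular showing $|z|Tg\in L^{2}$).

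Estimate \eqref{lemma_XYT_2} then follows by specialising the previous inequality to $g=\langle N\rangle^{-1}f$: the operators $|z|$, $T$, $\mathcal L$ and $|z|^{-1}$ all commute with $N$ --- hence with every bounded function of $N$ --- because $T=\partial_t$ and $\mathcal L$ have $t$-independent, rotation-invariant-in-$z$ coefficients, while $N$ annihilates radial functions of $z$. Thus $\langle N\rangle^{-1}|z|Tf=|z|T\langle N\rangle^{-1}f$, and using $\|\langle N\rangle^{-1}\|\le1$, $\|N\langle N\rangle^{-1}\|\le1$ and the D'Ambrosio-type inequality \eqref{lemma_Hardy_2} (which forces $d\ge2$) one bounds $\|\mathcal L^{1/2}\langle N\rangle^{-1}f\|\le\|\mathcal L^{1/2}f\|$ and $\|(N|z|^{-1})\langle N\rangle^{-1}f\|\le\||z|^{-1}f\|\le(d-1)^{-1}\|\mathcal L^{1/2}f\|$, whence $\|\langle N\rangle^{-1}|z|Tf\|\le\tfrac12\bigl(1+(d-1)^{-1}\bigr)\|\mathcal L^{1/2}f\|=\tfrac{d}{2(d-1)}\|\mathcal L^{1/2}f\|$.

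Finally, for \eqref{lemma_XYT_3} I would introduce $Z_j:=X_j-iY_j$, so that $Z_j^{*}=-(X_j+iY_j)$, and compute from \eqref{eq:commutators} that $Z_j^{*}Z_j=-(X_j^{2}+Y_j^{2})-4iT$ and $Z_jZ_j^{*}=-(X_j^{2}+Y_j^{2})+4iT$; summing over $j$ gives $\sum_j Z_j^{*}Z_j=\mathcal L-4diT$ and $\sum_j Z_jZ_j^{*}=\mathcal L+4diT$. The left-hand sides being non-negative quadratic forms on $\mathcal S(\mathbb H^{d})$, this yields $\langle\mathcal L f,f\rangle\ge 4d\,|\langle iTf,f\rangle|$; since $\mathcal L$ and the self-adjoint operator $iT=i\partial_t$ strongly commute ($\mathcal L$ is invariant under $t$-translations), the joint spectral theorem upgrades this to the operator inequality $\mathcal L\ge 4d\,|iT|$, so $\|\mathcal L f\|^{2}\ge16d^{2}\langle -T^{2}f,f\rangle=16d^{2}\|Tf\|^{2}$ and hence $\|Tf\|\le(4d)^{-1}\|\mathcal L f\|\le\tfrac12\|\mathcal L f\|$ for $d\ge1$, first on $\mathcal S(\mathbb H^d)$ and then on $H^{2}(\mathbb H^{d})$ by density. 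The only genuinely non-cosmetic points are the density/domain bookkeeping around the singular weight $|z|^{-1}$ in \eqref{lemma_XYT_1}--\eqref{lemma_XYT_2}, and, in \eqref{lemma_XYT_3}, the spectral-gap fact that $\mathcal L$ controls the central derivative $T$: this does not follow from naive integration by parts, but falls out cleanly from the $Z_j$-factorisation (a disguised fibrewise harmonic-oscillator structure) combined with the passage from the form inequality to the operator inequality via joint functional calculus of the commuting pair $(\mathcal L,iT)$.
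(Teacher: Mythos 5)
Your treatment of the first two estimates in \eqref{lemma_XYT_1} and of \eqref{lemma_XYT_2} is essentially the paper's argument: the paper reads off the operator identity $2|z|T=|z|^{-1}z^{\perp}\cdot\nabla_{\mathbb H}-N|z|^{-1}$ from \eqref{sublaplacian_2} and applies the triangle inequality directly (together with $|z^\perp|=|z|$ and $[N,|z|^{-1}]=0$); your quadratic-form detour through $\||z|Tg\|^{2}$ produces exactly the same bound after one cancellation, so it is the same proof up to presentation. The passage from the second inequality of \eqref{lemma_XYT_1} to \eqref{lemma_XYT_2} via $g=\langle N\rangle^{-1}f$, $\|\langle N\rangle^{-1}\|\le1$, $\|N\langle N\rangle^{-1}\|\le1$, and Hardy \eqref{lemma_Hardy_2} is also the paper's route verbatim.

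For \eqref{lemma_XYT_3} you take a genuinely different and in fact sharper path. The paper works with the single pair $(X_1,Y_1)$, writes $T=\tfrac14(Y_1X_1-X_1Y_1)$, obtains the quadratic-form bound $|\langle Tf,g\rangle|\le\tfrac12\|\L^{1/2}f\|\|\L^{1/2}g\|$ from two applications of Cauchy--Schwarz, and then upgrades this to $\|Tf\|\le\tfrac12\|(\L+i\ep)f\|$ by inserting $(\L+i\ep)^{-1/2}$ on each side, using that $(\L+i\ep)^{-1/2}$ commutes with $T$, and letting $\ep\searrow0$. You instead use the full ``ladder'' structure $Z_j=X_j-iY_j$, observe $\sum_jZ_j^*Z_j=\L-4d\,iT\ge0$ and $\sum_jZ_jZ_j^*=\L+4d\,iT\ge0$, and pass from the pair of form inequalities to the operator inequality $\L\ge 4d|iT|$ via the joint spectral measure of the strongly commuting pair $(\L,iT)$. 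This is correct (the joint spectral measure is supported in $\{\lambda_1\ge 4d|\lambda_2|\}$, hence $\L^2\ge 16d^2(iT)^2$) and yields the stronger bound $\|Tf\|\le(4d)^{-1}\|\L f\|$, which is strictly better than the paper's $\tfrac12\|\L f\|$ for $d\ge2$. What your approach buys is the sharper dimension-dependent constant and the clean conceptual link to the fibrewise harmonic-oscillator structure of $\L$; what the paper's approach buys is that it needs only the commutator of a single pair $(X_1,Y_1)$, so it transports verbatim to any H-type or stratified setting where such a pair exists, and it avoids having to argue the form-to-operator upgrade through the two-dimensional joint spectral theorem (it needs only that $(\L+i\ep)^{-1/2}$ commutes with $T$, a scalar consequence of the same strong commutation). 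Both are valid; yours is not a cosmetic variant.
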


\begin{remark}
For $f\in \Ker N\cap H^1(\mathbb H^d)$, the last estimate in \eqref{lemma_XYT_1} particularly implies 
\begin{align}
\label{lemma_XYT_4}
\||z|Tf\|\le \frac12\|\L^{1/2} f\|
\end{align}
since $N$ commutes with the multiplication by any cylindrical function in $z$. 
\end{remark}

\begin{proof}[Proof of Lemma \ref{lemma_XYT}]
Since $\L=-\nabla_{\mathbb H}\cdot\nabla_{\mathbb H}$, the first estimate in \eqref{lemma_XYT_1} follows. To show the second estimate in \eqref{lemma_XYT_1}, we recall $N=z^\perp\cdot \partial_z$ and \eqref{sublaplacian_2}. These formulas imply the identity
$$
2|z|T=2|z|^{-1}z^\perp \cdot z^\perp T=|z|^{-1}z^\perp\cdot \nabla_{\mathbb H}-N|z|^{-1}
$$
and the second estimate in \eqref{lemma_XYT_1} thus follows, where we used the fact that $|z|^{-1}$ commutes with $N$. For $d\ge2$, \eqref{lemma_Hardy_2} and the trivial bound $\|\<N\>^{-1}|N|^{k}\|\le1$ for $k=0,1$ imply \eqref{lemma_XYT_2}. Finally, it follows from the commutation relation $T=\frac14(Y_1X_1-X_1Y_1)$ in \eqref{eq:commutators} and from \eqref{lemma_XYT_1} that for any $f,g\in H^1(\mathbb H^d)$
\begin{align*}
|\<Tf,g\>|\le \frac14(\|X_1f\|\|Y_1g\|+\|Y_1f\|\|X_1g\|)\le \frac12\|\L^{1/2}f\|\|\L^{1/2}g\|.
\end{align*}
Plugging $f=(\L+i\ep)^{-1/2}\tilde f$ and $g=(\L+i\ep)^{-1/2}\tilde g$ shows 
$$
|\<(\L+i\ep)^{-1/2}T(\L+i\ep)^{-1/2} \tilde f,\tilde g\>|\le \frac12\|\tilde f\|\|\tilde g\|
$$
for any $\ep>0$ and $\tilde f,\tilde g \in L^2(\mathbb H^d)$. Since $\L$ commutes strongly with $T$ (see Appendix \ref{sec:Ls} below), the same argument as in the Step 1 of the proof of Theorem \ref{theorem_abstract} implies that $(\L+i\ep)^{-1/2}$ also commutes with $T$. Hence, we obtain 
$$
|\<T(\L+i\ep)^{-1}\tilde f,\tilde g\>|\le \frac12\|\tilde f\|\|\tilde g\|,
$$
which implies
\begin{equation}\label{eq:schmudgen_condition_T}
	\|Tf\|\le \frac12\|(\L+i\ep)f\|
\end{equation}
for any $\ep>0$ and $f\in H^2(\mathbb H^d)$, and \eqref{lemma_XYT_3} follows by letting $\ep\searrow0$. 
\end{proof}

\begin{lemma}
\label{lemma_N}
The operator $\varphi(\L)$ leaves $\Ker N$ invariant: $\varphi(\L)\Ker N\subset \Ker N$ for any $\varphi\in L^\infty([0,\infty))$.
\end{lemma}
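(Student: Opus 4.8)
The plan is to realize $N$ as the infinitesimal generator of a one‑parameter group of rotations acting on the horizontal variable $z$, and then to transfer the rotational invariance of $\L$ to $\varphi(\L)$ by means of the spectral theorem.

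First I would fix, for $\theta\in\R$, the rotation $\mathcal R_\theta\in\mathrm{SO}(2d)$ rotating simultaneously each plane $\{(x_j,y_j)\}$, $j=1,\dots,d$, by the angle $\theta$; this is precisely the flow of the vector field $N=\sum_{j=1}^d(y_j\partial_{x_j}-x_j\partial_{y_j})$ appearing in \eqref{sublaplacian_2} and Remark~\ref{rem:N}. Setting $U_\theta f(z,t):=f(\mathcal R_\theta z,t)$, the fact that $\mathcal R_\theta$ is orthogonal (hence volume preserving) and does not touch the $t$ variable shows that $\{U_\theta\}_{\theta\in\R}$ is a strongly continuous one‑parameter unitary group on $L^2(\mathbb H^d)$ leaving $\mathcal S(\mathbb H^d)$ invariant; differentiating $U_\theta f$ at $\theta=0$ identifies its generator with the closure of $N|_{\mathcal S(\mathbb H^d)}$. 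In particular $\mathcal S(\mathbb H^d)$ is a core for $N$, and by the spectral theorem $\Ker N=\{f\in L^2(\mathbb H^d)\ :\ U_\theta f=f\ \text{for all}\ \theta\in\R\}$, which in particular is a closed subspace.

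Next I would check the commutation $[U_\theta,\L]=0$. Using the representation $\Delta_{\mathbb H}=\partial_z^2+4NT+4|z|^2T^2$ from \eqref{sublaplacian_2}, for $f\in\mathcal S(\mathbb H^d)$ one has: $\partial_z^2$ commutes with $U_\theta$ because the Euclidean Laplacian in $z$ is invariant under the orthogonal map $\mathcal R_\theta$; the multiplication by $|z|^2$ and the operator $T=\partial_t$ commute with $U_\theta$ for the same elementary reasons; and $N$ commutes with $U_\theta$ since it is its generator. Hence $\L U_\theta f=U_\theta\L f$ on $\mathcal S(\mathbb H^d)$, and since $\mathcal S(\mathbb H^d)$ is a core for the self‑adjoint operator $\L$ and $U_\theta$ is bounded, $U_\theta$ preserves $D(\L)$ and satisfies $[U_\theta,(\L-\sigma)^{-1}]=0$ for every $\sigma\in\C\setminus[0,\infty)$. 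Then, by Stone's formula together with the limiting argument already used in the proof of Lemma~\ref{lemma_abstract_commutator_1}, $U_\theta$ commutes with the spectral measure $E_\L$, and therefore $U_\theta\varphi(\L)=\varphi(\L)U_\theta$ for every $\varphi\in L^\infty([0,\infty))$.

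Finally I would conclude: if $f\in\Ker N$, then $U_\theta f=f$ for all $\theta\in\R$, so $U_\theta\varphi(\L)f=\varphi(\L)U_\theta f=\varphi(\L)f$ for all $\theta\in\R$, which by the characterization of $\Ker N$ above means $\varphi(\L)f\in\Ker N$. The only genuinely nontrivial step is the commutation $[U_\theta,\L]=0$, but as sketched it reduces to the rotation‑invariance of the three summands of $\Delta_{\mathbb H}$ in \eqref{sublaplacian_2}; alternatively one may argue structurally, noting that $\mathcal R_\theta$ preserves the symplectic form entering the group law of $\mathbb H^d$ and hence induces an automorphism of $\mathbb H^d$ under which the sub‑Riemannian structure, and thus $\L$, is left invariant.
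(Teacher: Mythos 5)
Your proof is correct and follows essentially the same route as the paper's: both exploit the rotation invariance of $\L$ (equivalently, the commutation of $\L$ with $N$ acting termwise on the decomposition $\Delta_{\mathbb H}=\partial_z^2+4NT+4|z|^2T^2$), and both then pass from commutation on $\mathcal S(\mathbb H^d)$ to commutation of $\varphi(\L)$ with the relevant object via a Stone's-formula argument. The one presentational refinement you offer — working with the unitary group $U_\theta$ generated by $N$ rather than with the unbounded operator $N$ itself, and characterizing $\Ker N$ as the fixed-point subspace of $U_\theta$ — makes the final deduction $\varphi(\L)\Ker N\subset\Ker N$ completely automatic and sidesteps any domain subtleties that the paper's terser conclusion ``$N$ commutes with $\varphi(\L)$ and the lemma follows'' leaves implicit.
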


\begin{proof}
We first observe $N$ commutes with $\L$. Indeed, each $L_{d+j,j}=y_j\partial_{x_j}-x_j\partial_{y_j}$ commute with $\partial_z^2$ since $\partial_z^2$ is rotationally invariant, namely $R_{jk}(\theta)^{-1}\partial_z^2R_{jk}(\theta)=\partial_z^2$ for all $\theta\in \R$. Thus, $N=\sum_{j=1}^dL_{d+j,j}$ commutes with $\partial_z^2$. Since $N$ clearly also commutes with $|z|^2T^2$ and $NT$, so with $\L=\partial_z^2+4NT+4|z|^2T^2$. The same argument as in the proof of Theorem\til\ref{theorem_abstract} then yields that $N$ also commutes with $\varphi(\L)$ and the lemma follows. 
\end{proof}

We here record a version of Hadamard's three line theorem. Let us set $$S=\{\zeta=a+ib\ |\ 0<a<1,\ b\in \R\}.$$ 

\begin{lemma}
\label{lemma_interpolation}
Let $F(\zeta)$ be continuous and satisfy $|F(\zeta)|\lesssim \<r\>$ on $\overline S$, holomorphic in $S$ and 
$$
\sup_{b\in \R}|F(ib)|\le M_0,\quad \sup_{b\in \R}(1+|b|)^{-1}|F(1+ib)|\le M_1, 
$$
with some $M_0,M_1>0$. 
Then $|F(\zeta)|\le  |1+\sqrt2\zeta| M_0^{1-a}M_1^a$ on $S$. 
\end{lemma}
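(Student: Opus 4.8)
The plan is to reduce the statement to the classical Hadamard three-lines theorem by dividing out an auxiliary holomorphic function that absorbs both the polynomial growth on the boundary line $\Re\zeta=1$ and the linear growth in $|\Im\zeta|$ permitted in the hypothesis. First I would introduce, for each $\ep>0$, the regularized function
\begin{equation*}
	F_\ep(\zeta) := \frac{F(\zeta)}{(1+\sqrt2\zeta)\, M_0^{1-\zeta} M_1^{\zeta}}\, e^{\ep(\zeta^2-\zeta)},
\end{equation*}
where the branch of $M_j^{\zeta}$ is the principal one (so $|M_j^{\zeta}|=M_j^{\Re\zeta}$), and $1+\sqrt2\zeta$ is nonvanishing on $\overline S$ so that division is legitimate. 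The factor $(1+\sqrt2\zeta)$ is designed so that on the two boundary lines its modulus dominates the allowed growth: on $\Re\zeta=0$ one has $|1+\sqrt2(ib)|=\sqrt{1+2b^2}\ge 1$, while on $\Re\zeta=1$ one has $|1+\sqrt2(1+ib)|=\sqrt{(1+\sqrt2)^2+2b^2}\ge 1+|b|$ after a routine elementary check (this is exactly why the constant $\sqrt2$ is chosen). The Gaussian-type factor $e^{\ep(\zeta^2-\zeta)}$ has modulus $e^{\ep(a^2-b^2-a)}$, which decays like $e^{-\ep b^2}$ as $|b|\to\infty$ uniformly in $a\in[0,1]$; combined with the at-most-linear growth of $F$ this forces $F_\ep(\zeta)\to0$ as $|\Im\zeta|\to\infty$ in the closed strip, so $F_\ep$ is bounded on $\overline S$, holomorphic in $S$, and continuous up to the boundary.

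Next I would estimate $F_\ep$ on the two boundary lines. On $\Re\zeta=0$: $|F_\ep(ib)|\le |F(ib)|/(|1+\sqrt2 ib|\,M_0)\cdot e^{-\ep b^2}\le M_0/(1\cdot M_0)=1$ using $\sup_b|F(ib)|\le M_0$ and $|1+\sqrt2 ib|\ge1$ and $e^{-\ep b^2}\le1$. On $\Re\zeta=1$: $|F_\ep(1+ib)|\le |F(1+ib)|/(|1+\sqrt2(1+ib)|\,M_1)\cdot e^{\ep(1-b^2-1)}\le (1+|b|)M_1/((1+|b|)M_1)=1$, using the hypothesis $\sup_b (1+|b|)^{-1}|F(1+ib)|\le M_1$, the inequality $|1+\sqrt2(1+ib)|\ge1+|b|$, and again $e^{-\ep b^2}\le1$. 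By the maximum modulus principle applied to the bounded holomorphic function $F_\ep$ on the strip, we conclude $|F_\ep(\zeta)|\le1$ throughout $S$, i.e.
\begin{equation*}
	|F(\zeta)| \le |1+\sqrt2\zeta|\, M_0^{1-a} M_1^{a}\, e^{-\ep(a^2-b^2-a)} = |1+\sqrt2\zeta|\, M_0^{1-a} M_1^{a}\, e^{\ep(b^2+a-a^2)}.
\end{equation*}
Letting $\ep\searrow0$ for fixed $\zeta$ yields the claimed bound $|F(\zeta)|\le|1+\sqrt2\zeta|\,M_0^{1-a}M_1^{a}$ on $S$.

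The only genuinely non-routine point is the elementary inequality $|1+\sqrt2(1+ib)|\ge 1+|b|$ on the right boundary line, since $|1+\sqrt2\zeta|$ must be large enough to swallow the full factor $(1+|b|)$ coming from the growth hypothesis at $\Re\zeta=1$; squaring both sides reduces it to $(1+\sqrt2)^2+2b^2\ge 1+2|b|+b^2$, i.e. $b^2-2|b|+(1+\sqrt2)^2-1\ge0$, which holds because $(1+\sqrt2)^2-1=2+2\sqrt2>2\ge 2|b|-b^2+1$ has discriminant issues handled by $2+2\sqrt2>1$ so the quadratic $b^2-2|b|+2\sqrt2+2$ in $|b|$ has negative discriminant $4-4(2\sqrt2+2)<0$ and is thus everywhere positive. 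Everything else is a standard application of the Phragmén–Lindelöf / three-lines method, with the Gaussian regularizer handling the unboundedness of the strip in the imaginary direction.
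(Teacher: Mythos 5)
Your proof is correct, and the key move is the same as the paper's: divide $F$ by the auxiliary factor $1+\sqrt2\zeta$, whose modulus dominates $1$ on $\Re\zeta=0$ and $1+|b|$ on $\Re\zeta=1$, reducing the problem to a standard three-lines estimate. The paper then simply invokes the classical Hadamard three-lines theorem (citing Reed--Simon) applied to $\widetilde F(\zeta)=(1+\sqrt2\zeta)^{-1}F(\zeta)$, whereas you re-derive that theorem from scratch, additionally normalizing by $M_0^{1-\zeta}M_1^{\zeta}$ and inserting the Gaussian regularizer $e^{\ep(\zeta^2-\zeta)}$ to tame the unbounded strip before applying the maximum modulus principle and letting $\ep\searrow0$. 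Both are valid; your version is more self-contained and makes explicit the boundedness-on-the-strip step which the paper dispatches with a ``Clearly'' (using the hypothesis $|F(\zeta)|\lesssim\<\Im\zeta\>$ together with $|1+\sqrt2\zeta|\gtrsim\<\Im\zeta\>$ on $\overline S$), at the cost of being longer than a one-line citation.
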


\begin{proof}
Let $\widetilde F(\zeta)=(1+\sqrt2\zeta)^{-1}F(\zeta)$. Clearly, $\widetilde F(\zeta)$ is bounded continuous on $\overline S$ and holomorphic in $S$. Moreover, 
$
|\widetilde F(ib)|\le M_0$ and $|\widetilde F(1+ib)|\le M_1
$ since $|1+\sqrt2(1+ib)|\ge 1+|b|$. 
Hence, classical Hadamard's three line theorem (see e.g. \cite[Appendix to IX.4]{ReSi2}) can be applied to obtain $|\widetilde F(\zeta)|\le M_0^{1-a}M_1^a$ on $S$. 
\end{proof}

The following proposition plays an important role in the proof of Theorems \ref{theorem_resolvent_general} and \ref{theorem_resolvent_radial}. 

\begin{proposition}
\label{proposition_replace}
Let $0<a\le1$. Then the following estimates hold:
\begin{itemize}
\item For $d\ge1$, 
\begin{align}
\label{proposition_replace_1}
\|\<A_1\>\<\L\>^{1/2}\L^{-1/2}w_1\|&\le 5+3d^{-1},\\
\label{proposition_replace_2}
\|\<A_1\>^a\<\L\>^{a/2}\L^{-a/2}w_1^a\|&\le (1+\sqrt 2a)(5+3d^{-1})^a.
\end{align}
\item For $d\ge2$, 
\begin{align}
\label{proposition_replace_3}
\|\<A_1\>\<\L\>^{1/2}\L^{-1/2}w_2\|&\le 5+4(d-1)^{-1},\\
\label{proposition_replace_4}
\|\<A_1\>^a\<\L\>^{a/2}\L^{-a/2}w_2^a\|&\le (1+\sqrt 2a)(5+4(d-1)^{-1})^a,\\
\label{proposition_replace_5}
\|\<A_{1/2}\>^aw_3^a\<N\>^{-a}\|&\le \left(4+ d^{-1}+(d-1)^{-1}\right)^a,\\
\label{proposition_replace_6}
\|\<A\>\L^{-1/2}w_4\<N\>^{-1}\|&\le 3+(d-1)^{-1},\\
\label{proposition_replace_7}
\|\<A\>^a\L^{-a/2}w_4^a\<N\>^{-a}\|&\le (1+\sqrt2 a)(3+(d-1)^{-1})^a. 
\end{align}
\end{itemize}
\end{proposition}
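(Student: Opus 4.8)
The plan is to treat the integer-exponent estimates (\eqref{proposition_replace_1}, \eqref{proposition_replace_3}, \eqref{proposition_replace_5}, \eqref{proposition_replace_6}) first by direct computation, and then obtain the fractional ones (\eqref{proposition_replace_2}, \eqref{proposition_replace_4}, \eqref{proposition_replace_7}) by complex interpolation via Lemma \ref{lemma_interpolation}. For the integer cases, the key observation is that $\<A_1\>\<\L\>^{1/2}\L^{-1/2}w_1 = \<\L\>^{-1/2}\<A_1\>\<\L\>^{1/2}\cdot \<\L\>^{1/2}\L^{-1/2}w_1$ after unwinding the definition $A_1 = \<\L\>^{-1/2}A\<\L\>^{-1/2}$; more to the point, I would write $\<A_1\> \le 1 + |A_1|$ and estimate $\|A_1 \<\L\>^{1/2}\L^{-1/2}w_1 f\|$ by expanding $A_1 = \<\L\>^{-1/2}A\<\L\>^{-1/2}$ and using that $A = -iz\cdot\nabla_{\mathbb H} - 2it\partial_t - i(d+1)$ from \eqref{eq:A}. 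The strategy is to commute the Sobolev weights $\<\L\>^{\pm1/2}$, $\L^{-1/2}$ past $A$ using Lemma \ref{lemma_abstract_commutator_1} (with $\H = \L$, $s=1$), which produces only bounded multiplier corrections, thereby reducing everything to bounding operators of the form $(\text{multiplier})\cdot(\text{first-order vector field})\cdot\L^{-1/2}\cdot(\text{weight})$ on $L^2$. Each such term is then controlled by the Hardy-type inequalities of Lemma \ref{lemma_Hardy} together with the auxiliary bounds of Lemma \ref{lemma_XYT}, exploiting the pointwise bounds $w_1 \le 1$, $w_1|z|\le 1$, $w_1|t|\le 1$, $w_1 \le w_4$ (and analogously for $w_2, w_3, w_4$) recorded in the Remark after Theorem \ref{theorem_resolvent_radial}.

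Concretely, for \eqref{proposition_replace_1}: after moving $\<\L\>^{1/2}\L^{-1/2}$ to act on $w_1$, the operator $A\<\L\>^{1/2}\L^{-1/2}w_1$ splits into the pieces $-iz\cdot\nabla_{\mathbb H}(\cdots)$, $-2it\partial_t(\cdots)$ and $-i(d+1)(\cdots)$. The term with $z\cdot\nabla_{\mathbb H} = z\cdot\partial_z + 2|z|^2 T$ contributes $\|z\cdot\partial_z(\text{weight})f\|$ and $\||z|^2 T(\text{weight})f\|$; the first is handled after noting $z\cdot\partial_z$ hitting $\L^{-1/2}w_1 f$ can be rearranged using $\nabla_{\mathbb H} = \partial_z + 2z^\perp T$ and then $\|\nabla_{\mathbb H}\L^{-1/2}\|\le 1$ from \eqref{lemma_XYT_1}, while $\||z|^2 T \L^{-1/2}\|$ is dominated via $\||z|T\cdot|z|\L^{-1/2}\|$ and \eqref{lemma_XYT_1}/\eqref{lemma_Hardy_1}. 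The vertical piece $t\partial_t = tT$ gives $\|tT(\text{weight})\L^{-1/2}f\|$, bounded using $w_1|t| \le 1$ together with $\|T\L^{-1/2}\|$-type control derived from \eqref{lemma_XYT_3}, i.e. $\|T\L^{-1}\|\le 1/2$ refined to half-powers through the strong commutativity of $T$ and $\L$ (the same $(\L+i\ep)^{-1/2}$-commutation trick used in the proof of Lemma \ref{lemma_XYT}). Collecting the numerical constants — $1$ from $\<A_1\>\ge 1$ plus contributions $\le 3d^{-1}$ from the Hardy terms and $\le 4$ from the vector-field/Sobolev commutator terms — yields the bound $5 + 3d^{-1}$. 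Estimates \eqref{proposition_replace_3}, \eqref{proposition_replace_5}, \eqref{proposition_replace_6} follow the same template: \eqref{proposition_replace_3} replaces \eqref{lemma_Hardy_1} by \eqref{lemma_Hardy_2} (whence $d-1$ in place of $d$, and $d\ge2$), \eqref{proposition_replace_5} uses that $w_3$ is cylindrical so $\<N\>^{-1}$ commutes with it and with $\L$ (Lemma \ref{lemma_N}) and uses \eqref{lemma_XYT_2}, and \eqref{proposition_replace_6} works with the homogeneous weight $w_4$ and the bare generator $A$ (no Sobolev dressing), using the scaling-critical Hardy inequality \eqref{lemma_Hardy_1} directly.

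For the fractional estimates \eqref{proposition_replace_2}, \eqref{proposition_replace_4}, \eqref{proposition_replace_7}, I would fix $f \in \S(\mathbb H^d)$ of unit norm and consider the analytic family $F(\zeta) = \<\<\L\>^{-\zeta/2}\<A_1\>^{\zeta}\<\L\>^{\zeta/2}\cdots\, ,\, g\>$-type function — more precisely, define $F(\zeta) = \langle \<A_1\>^{\zeta}\<\L\>^{\zeta/2}\L^{-\zeta/2}w_1^{\zeta} f, g\rangle$ for $\zeta \in \overline{S}$, $g \in \S$, check it is continuous on $\overline S$, holomorphic in $S$, with polynomial growth in $\Im\zeta$ (here one uses that $\<A_1\>^{ib}$ is unitary and the Schwartz decay of $f,g$), bounded by $\|f\|\|g\|$ on $\Re\zeta = 0$ and by $(1+|b|)\cdot(5+3d^{-1})\|f\|\|g\|$ on $\Re\zeta = 1$ (the latter from \eqref{proposition_replace_1}, the factor $1+|b|$ absorbing $\<A_1\>^{1+ib}$ versus $\<A_1\>^1$). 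Lemma \ref{lemma_interpolation} with $M_0 = 1$, $M_1 = 5+3d^{-1}$ then gives $|F(a)| \le |1+\sqrt2 a|\,M_0^{1-a}M_1^a$, which is exactly \eqref{proposition_replace_2}; the same argument produces \eqref{proposition_replace_4} and \eqref{proposition_replace_7}.

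I expect the main obstacle to be the careful bookkeeping in the integer cases: reorganising the first-order operator $A$ against the fractional Sobolev weights $\<\L\>^{\pm1/2}$, $\L^{-1/2}$ so that every resulting term is genuinely of the form (bounded multiplier)$\times$($\L^{1/2}$-bounded first-order operator), and in particular justifying the half-power versions of the vertical-derivative bound $\|T\L^{-1}\| \le 1/2$ and of the Hardy inequalities on $L^2$ rigorously (domain issues, the $(\L+i\ep)^{-1/2}$-regularisation, passage to the limit $\ep \searrow 0$) — the numerics themselves are routine once the operator identities are pinned down. A secondary technical point is verifying the growth and analyticity hypotheses of Lemma \ref{lemma_interpolation} for the chosen family $F(\zeta)$, which requires knowing that $\<A_1\>^{\zeta}$, $\<\L\>^{\zeta/2}$ and $\L^{-\zeta/2}$ preserve a common dense core (e.g. $\S(\mathbb H^d)$ intersected with suitable spectral subspaces) and behave well under the $C^1$-regularisation in $\alpha$ used for $A_1$.
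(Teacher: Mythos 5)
Your high-level framework is essentially the paper's: establish the integer-exponent bounds by expanding the generator $A$ against the Sobolev weights via Lemma~\ref{lemma_abstract_commutator_1}, control the resulting first-order pieces using Hardy's inequalities (Lemma~\ref{lemma_Hardy}), Lemma~\ref{lemma_XYT} and the pointwise bounds on $w_j$, and then interpolate via Lemma~\ref{lemma_interpolation} after an $\ep$-regularisation of $\L^{-\zeta/2}$. However, as actually written, the commutation scheme in the integer case has a genuine gap that would prevent the argument from closing.

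The trouble is the placement of the spatial weight. You work with $A_1\<\L\>^{1/2}\L^{-1/2}w_1 = \<\L\>^{-1/2}A\L^{-1/2}w_1$, put $A$ on the far left and $w_1$ on the far right, then hope to reduce to terms of the form $(\text{multiplier})(\text{first-order vector field})\L^{-1/2}(\text{weight})$. But the first-order pieces of $A$ (e.g.\ $z\cdot\partial_z$, $t\partial_t$, $|z|^2T$) contain the multiplication operators $z$, $t$, $|z|^2$, which do not commute with $\L^{-1/2}$ or $\<\L\>^{-1/2}$. When you try to let the vector fields ``reach'' $w_1 f$ by pushing them past $\L^{-1/2}$, you pick up uncontrolled commutators such as $[\L^{-1/2},z]$ and $[\L^{-1/2},t]$ that none of the available lemmas bound. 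Equivalently, the Sobolev factors $\L^{-1/2}\<\L\>^{-1/2}$ end up on the wrong side of the spatial multiplications and cannot be combined with $\nabla_{\mathbb H}$ or $T$ to give bounded spectral multipliers. The paper avoids this entirely by proving the \emph{adjoint} bound $\|w_1(\L+\ep)^{-1/2}\<\L\>^{1/2}\<A_1\>\|$: there the weight $w_1$ sits on the left, $\<A_1\>$ on the right, and when one commutes $iA$ to the left through $(\L+\ep)^{-(1+ib)/2}\<\L\>^{ib/2}$ using Lemma~\ref{lemma_abstract_commutator_1}, the vector fields land directly on $w_1$, giving terms like $w_1 z\cdot\nabla_{\mathbb H}\,M_1\<\L\>^{-1/2}$ and $w_1 tT\,M_1\<\L\>^{-1/2}$ in which $w_1z$, $w_1 t$ are bounded functions and $\nabla_{\mathbb H}M_1\<\L\>^{-1/2}$, $T M_1\<\L\>^{-1/2}$ are bounded operators with explicit norms. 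You should state the duality reduction up front and set the whole computation in the adjoint form.

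Two further concrete points. First, your suggestion that the vertical term is controlled by a ``half-power'' refinement $\|T\L^{-1/2}\|$ is false: $T$ is homogeneous of degree $2$ under $\delta_\lambda$, the same as $\L$, so $T\L^{-1/2}$ is unbounded; only $\|T\L^{-1}\|\le\tfrac12$ (\ref{lemma_XYT_3}) holds. The paper gets away with the full power because in the adjoint form the operator $T M_1\<\L\>^{-1/2}$ carries the symbol $\lambda(\lambda+\ep)^{-1/2}\<\lambda\>^{-1/2}$, so an entire power of $\L$ is available to absorb $T$. Second, the residual zeroth-order term ($w_1 M_1\<\L\>^{1/2}$ in the paper's notation) requires a spectral split $\Id = E_\L([0,\delta])+E_\L([\delta,\infty))$ — Hardy on the low-frequency part, $|w_1|\le1$ on the high-frequency part, and optimisation in $\delta$ — to obtain the explicit numerical constant; this step is absent from your constant bookkeeping. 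Finally, the $\ep$-regularisation of $\L^{-\zeta/2}$ and the limit $\ep\searrow0$ via dominated convergence on spectral measures, which you relegate to a secondary remark, is in fact where the estimate is actually established; the function $F(\zeta)$ you propose is not well-defined without it.
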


\begin{proof}
To prove \eqref{proposition_replace_1} and \eqref{proposition_replace_2}, we first show
\begin{align}
\label{proposition_replace_1'}
\|w_1(\L+\ep)^{-1/2}\<\L\>^{1/2}\<A_1\>\|&\le 5+3d^{-1},\\
\label{proposition_replace_2'}
\|w_1^a(\L+\ep)^{-a/2}\<\L\>^{a/2}\<A_1\>^a\|&\le (1+\sqrt 2a)(5+3d^{-1})^a
\end{align}
for all $\ep>0$. To this end, we shall show
\begin{align}
\label{proposition_replace_proof_1}
\|w_1^{1+ib}(\L+\ep)^{-(1+ib)/2}\<\L\>^{(1+ib)/2}\<A_1\>\|\le 5+(3+2|b|)d^{-1}
\end{align}
for all $b\in \R$. We let $M_1=M_1(\L)=(\L+\ep)^{-(1+ib)/2}\<\L\>^{ib/2}$ and use Lemma \ref{lemma_abstract_commutator_1} to compute
\begin{align*}
[M_1,iA]&=2\L M_1'(\L)=M_1\{-(1+ib)\L(\L+\ep)^{-1}+ib \L^2\<\L\>^{-2}\}\\
&=M_1\{-1+(1+ib)\ep(\L+\ep)^{-1}-ib \<\L\>^{-2}\}=-M_1+M_2
\end{align*}
with $M_2=M_1\{(1+ib)\ep(\L+\ep)^{-1}-ib \<\L\>^{-2}\}$, and , recalling the definition \eqref{def:SA} of $A_\alpha$,
\begin{align*}
&(\L+\ep)^{-(1+ib)/2}\<\L\>^{(1+ib)/2}(iA_1+1)
=M_1iA\<\L\>^{-1/2}+M_1\<\L\>^{1/2}\\
&=iAM_1\<\L\>^{-1/2}+[M_1,iA]\<\L\>^{-1/2}+M_1\<\L\>^{1/2}\\
&=\{z\cdot\nabla_{\mathbb H}+2tT+d)\}M_1\<\L\>^{-1/2}+M_2\<\L\>^{-1/2}+M_1\<\L\>^{1/2}.
\end{align*}
Since $w_1|z|\le1$, $w_1|t|\le1$, we obtain by using Lemma \ref{lemma_XYT} that
\begin{align}
\label{proposition_replace_proof_2}
\|w_1z\cdot\nabla_{\mathbb H}M_1\<\L\>^{-1/2}\|&\le \|w_1z\|_{L^\infty(\mathbb{H}^d)} \|\L^{1/2}(\L+\ep)^{-1/2}\<\L\>^{-1/2}\|\le1,\\
\label{proposition_replace_proof_3}
2\|w_1tTM_1\<\L\>^{-1/2}\|&\le \|w_1t\|_{L^\infty(\mathbb{H}^d)} \| \L (\L+\ep)^{-1/2}\<\L\>^{-1/2}\|\le1.
\end{align}
We also know, by \eqref{lemma_Hardy_1} and $w_1\le w_4$, that
\begin{align}
\label{proposition_replace_proof_4}
d\|w_1M_1\<\L\>^{-1/2}\|\le \|\L^{1/2}(\L+\ep)^{-1/2}\<\L\>^{-1/2}\|\le 1
\end{align}
and, since $\|(\L+\ep)^{-1}\|\le \ep^{-1}$ by the spectral theorem, 
\begin{equation}
\label{proposition_replace_proof_5}
\begin{split}
	\|w_1M_2\<\L\>^{-1/2}\|
	&\le d^{-1}(1+|b|)\|\L^{1/2}\ep(\L+\ep)^{-3/2}\<\L\>^{-1/2}\|+d^{-1}|b|\|\L^{1/2}(\L+\ep)^{-1/2}\<\L\>^{-2}\|\\
	&\le (1+2|b|)d^{-1}
\end{split}
\end{equation}
where we used also \eqref{proposition_replace_proof_4}.
To deal with the term $M_1\<\L\>^{1/2}$, recalling that $\supp E_\L=\sigma(\L)=[0,\infty)$, we decompose the identity as 
$\Id=E_\L([0,\delta])+E_\L([\delta,\infty))$ for a fixed $\delta>0$, and apply \eqref{lemma_Hardy_1} to the part $E_\L([0,\delta])$ and the bound $|w_1|\le1$ to $E_\L([\delta,\infty))$, respectively, obtaining
\begin{align}
\|w_1M_1\<\L\>^{1/2}\|
&\le d^{-1}\|\L^{1/2}(\L+\ep)^{-1/2}\<\L\>^{1/2}E_\L([0,\delta])\|+\|(\L+\ep)^{-1/2}\<\L\>^{1/2}E_\L([\delta,\infty))\|\\
&\le (d^{-1}+\delta^{-1/2}) \jap{\delta}^{1/2}
\end{align}
and hence, taking the infimum for $\delta>0$,
\begin{equation}
	\label{proposition_replace_proof_6}
	\|w_1M_1\<\L\>^{1/2}\| \le (1+d^{-4/5})^{5/4} < 2(1+d^{-1}).
\end{equation}
%
The estimates \eqref{proposition_replace_proof_2}--\eqref{proposition_replace_proof_6} and the simple bound $\|(iA_1+1)^{-1}\<A_1\>\|\le1$ imply \eqref{proposition_replace_proof_1} and \eqref{proposition_replace_1'}. 

For proving \eqref{proposition_replace_2'}, we consider a complex function
$$
\varphi(\zeta)=\<w_1^{\zeta}(\L+\ep)^{-\zeta/2}\<\L\>^{\zeta/2}\<A_1\>^\zeta f,g\>
$$
for $f,g\in \S(\mathbb H^d)$ with $\|f\|=\|g\|=1$ and $0\le \Re \zeta\le1$. Since 
$$
|\varphi(\zeta)|\lesssim \ep^{-1/2}\|\<A_1\>^{|\Re \zeta|}f\|\|g\|\lesssim \ep^{-1/2}\|\<A_1\>f\|\|g\|<\infty
$$
for each $\ep>0$ and $0\le\Re \zeta\le1$, $\varphi(\zeta)$ is bounded continuous on the strip $\{\zeta\in\C\ |\ 0\le \Re \zeta\le1\}$. Moreover, $\varphi(\zeta)$ is holomorphic in the interior $\{\zeta\in\C\ |\ 0<\Re \zeta<1\}$ and \eqref{proposition_replace_proof_1} implies
$$
|\varphi(ib)|\le 1,\quad |\varphi(1+ib)|\le 5+(3+2|b|)d^{-1}\le (5+3d^{-1})(1+|b|). 
$$
Lemma \ref{lemma_interpolation} then implies 
$
|\varphi(a)|\le (1+\sqrt 2a)(5+3d^{-1})^a
$ for $0<a<1$
and \eqref{proposition_replace_2'} follows. 

Now we observe that, for any $f\in \S(\mathbb H^d)$ and $0\le a\le 1$, $w_1^a(\L+\ep)^{-a/2}\<\L\>^{a/2}\<A_1\>^af$ converges to $w_1^a\L^{-a/2}\<\L\>^{a/2}\<A_1\>^af$ strongly in $L^2(\mathbb H^d)$ as $\ep\searrow0$. Indeed, we have $g:=\<A_1\>^af\in L^2(\mathbb H^d)$ and, thanks to the same argument as in \eqref{proposition_replace_proof_6} and the spectral decomposition theorem, 
\begin{align*}
&\|w_1^a\{(\L+\ep)^{-a/2}-\L^{-a/2}\}\<\L\>^{a/2}g\|^2\\
&\lesssim \|E_\L([0,1])\L^{a/2}\{(\L+\ep)^{-a/2}-\L^{-a/2}\}\<\L\>^{a/2}g\|^2+\|E_\L([1,\infty])\{(\L+\ep)^{-a/2}-\L^{-a/2}\}\<\L\>^{a/2}g\|^2\\
&\lesssim \int_0^1\<\lambda\>^{a}\left|\left(\frac{\lambda}{\lambda+\ep}\right)^{a/2}-1\right|^2d\<E_{\L}(\lambda)g,g\>+\int_1^\infty \frac{\<\lambda\>^{a}}{\lambda^a}\left|\left(\frac{\lambda}{\lambda+\ep}\right)^{a/2}-1\right|^2d\<E_{\L}(\lambda)g,g\>
\to 0
\end{align*}
as $\ep\searrow0$ by the dominated convergence theorem. 
Hence, we obtain \eqref{proposition_replace_1} and \eqref{proposition_replace_2} by letting $\ep\searrow0$ and taking the adjoints in \eqref{proposition_replace_1'} and \eqref{proposition_replace_2'}. 

The proof of \eqref{proposition_replace_3} and \eqref{proposition_replace_4} is almost identical. Precisely, it is enough to show 
\begin{align}
\label{proposition_replace_proof_7}
\|w_2^{1+ib}(\L+\ep)^{-(1+ib)/2}\<\L\>^{(1+ib)/2}(iA_1+1)\|\le 5+(4+2|b|)(d-1)^{-1}
\end{align}
for all $\ep>0$ and $b\in \R$. To this end, by the same argument as above, we observe that
\begin{align*}
\|w_2z\cdot\nabla_{\mathbb H}M_1\<\L\>^{-1/2}\|&\le1,\\
2\|w_2tTM_1\<\L\>^{-1/2}\<\L\>^{-1/2}\|&\le 1. 
\end{align*}
Moreover, using \eqref{lemma_Hardy_2} instead of \eqref{lemma_Hardy_1} (for which we need the assumption $d\ge2$) imply
\begin{align*}
d\|w_2M_1\<\L\>^{-1/2}\|&\le 1+(d-1)^{-1},\\
\|w_2M_2\<\L\>^{-1/2}\|&\le (1+2|b|)(d-1)^{-1},\\
\|w_2M_1\<\L\>^{1/2}\|&\le 2+2(d-1)^{-1}
\end{align*}
and \eqref{proposition_replace_proof_7} thus follows from these five bounds. 

To prove \eqref{proposition_replace_5},  we compute by using Lemma \ref{lemma_abstract_commutator_1} that
\begin{align*}
iA_{1/2}+1&=iA\<\L\>^{-1/2}+[\<\L\>^{-1/4},iA]\<\L\>^{-1/4}+1\\
&=(iA-2^{-1})\<\L\>^{-1/2}+2^{-1}\<\L\>^{-5/2}+1
\end{align*}
and use the same argument as above and \eqref{lemma_Hardy_1} to obtain
\begin{align}
\nonumber
&\|\<N\>^{-1}w_3(z\cdot\nabla_{\mathbb H}+d+2^{-1})\<\L\>^{-1/2}\|+2^{-1}\|\<N\>^{-1}w_3\<\L\>^{-5/2}\|+\|\<N\>^{-1}w_3\|\\
\label{proposition_replace_proof_8}
&\le\{1+(d+2^{-1})d^{-1}\}\|\L^{1/2}\<\L\>^{-1/2}\|+(2d)^{-1}\|\L^{1/2}\<\L\>^{-5/2}\|+1
\le 3+d^{-1}. 
\end{align}
To deal with the term associated with $2tT$, taking  into account the fact that $\<N\>$ commutes with any cylindrical function,
and using $w_3 |t| \le |z|$ and \eqref{lemma_XYT_2}, we obtain
\begin{align*}
\|\<N\>^{-1-ib}w_3^{1+ib}2tT\<\L\>^{-1/2}\|\le
2 \|\<N\>^{-1} |z| T\<\L\>^{-1/2}\|\le
 \frac d{d-1} \|\L^{1/2}\<\L\>^{-1/2}\|\le 1+(d-1)^{-1},
\end{align*}
where we used the assumption $d\ge2$ to apply \eqref{lemma_XYT_2}. Hence, we have
\begin{align*}
\|\<N\>^{-1-ib}w_3^{1+ib}\<A_{1/2}\>^{1+ib}\|
\le 4+ d^{-1}+(d-1)^{-1}.
\end{align*}
Since the right hand side is bounded with respect to $b$, classical Hadamard's three line theorem shows
\begin{align*}
\|\<N\>^{-a}w_3^a\<A_{1/2}\>^{a}\|
\le \left(4+ d^{-1}+(d-1)^{-1}\right)^a
\end{align*}
for $0\le a\le 1$ and \eqref{proposition_replace_5} follows by the duality.  

Similarly, \eqref{proposition_replace_6} and \eqref{proposition_replace_7} also follow from the complex interpolation and the estimate
$$
\|\<N\>^{-1}w_4(\L+\ep)^{-(1+ib)/2}(iA-1)\|\le 3+(d-1)^{-1}+2|b|d^{-1}. 
$$
As above, this estimate is obtained by combining with the formula
\begin{align*}
(\L+\ep)^{-(1+ib)/2}(iA-1)
=(z\cdot\nabla_{\mathbb H}+2tT+d-1-ib)(\L+\ep)^{-(1+ib)/2}+(1+ib)\ep(\L+\ep)^{-(3+ib)/2}
\end{align*}
and the following two estimates: 
\begin{align}
\nonumber
&\|\<N\>^{-1}w_4(z\cdot\nabla_{\mathbb H}+d-1-ib)(\L+\ep)^{-(1+ib)/2}\|+|1+ib|\|\<N\>^{-1}w_4\ep(\L+\ep)^{-(3+ib)/2}\|\\
\nonumber
&\le \{1+(d-1+|b|)d^{-1}\}\|\L^{1/2}(\L+\ep)^{-1/2}\|+(1+|b|)d^{-1}\|\L^{1/2}(\L+\ep)^{-1/2}\|\|\ep(\L+\ep)^{-1}\|\\
\label{proposition_replace_proof_9}
&\le 2+2|b|d^{-1}
\end{align}
and 
$
\|\<N\>^{-1}w_42tT(\L+\ep)^{-1/2}\|\le 1+(d-1)^{-1}
$. 
\end{proof}

We are finally in a position to prove Theorem \ref{theorem_resolvent_general}.
\begin{proof}[Proof of Theorem \ref{theorem_resolvent_general}]Theorem \ref{theorem_resolvent_general} is a direct consequence of Theorem \ref{theorem_abstract} and Proposition \ref{proposition_replace}. Indeed, for the case $G=w_1^s\<\L\>^{(s-1)/2} $, we rewrite $\<\L\>^{(s-1)/2} w_1^sf$ with $f\in \S(\mathbb H^d)$ as 
$$
\<\L\>^{(s-1)/2} w_1^s f=\L^{s/2}\<\L\>^{-1/2}\<A_1\>^{-s}\cdot \<A_1\>^s\<\L\>^{s/2}\L^{-s/2}w_1^sf. 
$$
Theorem \ref{theorem_abstract} with $(\H,\mu,\alpha)=(\L^s,s,1)$ or $(\H,\mu,\alpha)=(\L_s,s,1)$, \eqref{proposition_replace_1} and  \eqref{proposition_replace_2} with $a=s\in (\frac12,1]$ yield 
\begin{align*}
|\<w_1^s\<\L\>^{(s-1)/2} (\H-\sigma)^{-1}\<\L\>^{(s-1)/2} w_1^sf,g\>|&\le \kappa(\H,s,s)\|\<A_1\>^s\<\L\>^{s/2}\L^{-s/2}w_1^sf\|\|\<A_1\>^s\<\L\>^{s/2}\L^{-s/2}w_1^sg\|\\
&\le C(s)\kappa(\H,s,s)\|f\|\|g\|
\end{align*}
for $g\in \S(\mathbb H^d)$, where $C(1)=(5+3d^{-1})^2$ and $C(s)=(1+\sqrt 2s)^2(5+3d^{-1})^{2s}$ for $1/2<s<1$. By taking the supremum over $g\in \S(\mathbb H^d)$ with $\|g\|=1$ and using the density argument, we arrive at
$$
\|w_1^s\<\L\>^{(s-1)/2} (\H-\sigma)^{-1}\<\L\>^{(s-1)/2}w_1^s f\|\le C(s)\kappa(\H,s,s)\|f\|.
$$
Similarly, the other three cases also follow from the same argument by writing
\begin{align*}
\<\L\>^{(s-1)/2} w_2^s f&=\L^{s/2}\<\L\>^{-1/2}\<A_1\>^{-s}\cdot \<A_1\>^s\<\L\>^{s/2}\L^{-s/2}w_2^sf,\\
\L^{s/2}\<\L\>^{-1/4}w_3^\mu \<N\>^{-\mu}f&=\L^{s/2}\<\L\>^{-1/4}\<A_{1/2}\>^{-\mu}\cdot \<A_{1/2}\>^{\mu}w_3^\mu \<N\>^{-\mu}f,\\
\L^{(s-\mu)/2} w_4^\mu \<N\>^{-\mu}f&=\L^{s/2}\<A\>^{-\mu}\cdot \<A\>^{\mu}\L^{-\mu/2}w_4^\mu \<N\>^{-\mu}f,
\end{align*}
and using Theorem \ref{theorem_abstract} and Proposition \ref{proposition_replace} with $a=\mu\in (\frac12,1]$. 
\end{proof}

\begin{proof}[Proof of Theorem \ref{theorem_resolvent_radial}]
Thanks to Theorem \ref{theorem_abstract}, Lemma \ref{lemma_N} and the same argument as above, Theorem \ref{theorem_resolvent_radial} follows from the following two estimates: 
\begin{align*}
\|\<A_{1/2}\>^\mu w_3^\mu f\|\le (4+d^{-1})^\mu \|f\|,\quad \|\<A\>^\mu \L^{-\mu/2}w_4^\mu f\|\le (1+\sqrt2 a)(3+2d^{-1})^\mu \|f\|
\end{align*}
for $f\in \S(\mathbb H^d)\cap \Ker N$. To this end, thanks to the same argument as above and the duality argument, it is enough to show
\begin{align}
\label{theorem_resolvent_radial_proof_1}
\|P_N w_3(iA_{1/2}-1)\|\le 4+d^{-1},\quad 
\|P_N w_4(\L+\ep)^{-(1+ib)/2} (iA-1)\|\le 3+2|b|d^{-1},
\end{align}
where $P_N$ denotes the projection onto $\Ker N$. The proof of these estimates is essentially the same as that of Lemma\til\ref{lemma_N}. The only difference is using \eqref{lemma_XYT_4} instead of \eqref{lemma_XYT_2} or \eqref{lemma_XYT_3} to deal with the terms associated with $2tT$. Precisely, we use  \eqref{lemma_XYT_4} to obtain, for all $d\ge1$ and $j=3,4$,
\begin{align*}
\|P_Nw_{j}(2tT)(\L+\ep)^{-(1+ib)/2}\|
\le 2\||z|TP_N(\L+\ep)^{-(1+ib)/2}\|
\le \|\L^{1/2}(\L+\ep)^{-(1+ib)/2}\|\le 1,
\end{align*}
where we also used the fact that $w_jtT$  commutes with $P_N$. This estimate together with \eqref{proposition_replace_proof_8} and \eqref{proposition_replace_proof_9} imply \eqref{theorem_resolvent_radial_proof_1}. 
\end{proof}

\appendix
	
	\section{The fractional powers of the sublaplacian, and their spectral theory}
	\label{sec:Ls}
	
	In this appendix we give the definition of the fractional sublaplacians $\L^s$ and $\L_s$. Our exposition here will be concise---for more details we refer the reader to the book by G.B. Folland \cite{Folland2}, as well as \cite{FR,RT, RT2} for some short recap. Note that our notation and definitions may slightly differ from other present in the literature.
	
	After recalling the convolution between two functions $ f $ and $ g $ on $\Heis^d$
	$$
	(f*g)(z,t) := \int_{\Heis^d} f\left(z-z',t-t'\right)g(z',t') dz' dt',
	$$
	we define the $ \lambda$-twisted convolution as
	\begin{equation*}
		(f^\lambda*_\lambda g^\lambda)(z)
		:=
		(f*g)^\lambda(z) 
		= \int_{\R^{2d}} f^\lambda(z-z')g^\lambda(z') e^{\frac{i}{2}\lambda(x'y-xy')} \,dz'
		,
	\end{equation*}
	where $ f^\lambda $ stands for the Euclidean inverse Fourier transform of $ f $ in the {vertical variable} $t$:
	\begin{equation*}
		f^\lambda(z) := \int_{-\infty}^\infty f(z,t) e^{i\lambda t} \,dt.
	\end{equation*}
	It is worth commenting that the group Fourier transform of $f\in L^1(\mathbb H^d)$ is given by the Weyl transform $W_\lambda$ of $f^\lambda$. However, since for the purposes of this work we do not use the Fourier analysis on the Heisenberg group, again we refer the reader to \cite{Folland2} for this topic.

	Let us introduce also the scaled Laguerre functions of type $(d-1)$: 
	\begin{equation*}
		\varphi_k^\lambda(z) := L_k^{d-1}\Big(\frac12 |\lambda||z|^2\Big)e^{-\frac14 |\lambda||z|^2},
	\end{equation*}
	where $L_k^{d-1} $ are the Laguerre polynomials of type $ (d-1)$ (see \cite[Chapter 1.4] {Than}). The functions $\{\varphi_k^\lambda\}_{k=0}^{\infty}$ form an orthogonal basis for the subspace consisting of radial functions in $ L^2(\R^{2d})$, namely functions invariant under the action of the group $\textbf{SO}(2d)$ of all orthogonal matrices of order $2d$ and determinant $1$.  
	In this way, we can write the so-called \textit{special Hermite expansion} of a function
	$ g $ on $ \R^{2d}$ as
	$$
	g(z)=(2\pi)^{-d} |\lambda|^d \sum_{k=0}^\infty  g*_\lambda \varphi_k^\lambda(z),
	$$
	and therefore give the spectral decomposition of the sublaplacian:
	\begin{equation*}
		\mathcal{L}f(z,t) = (2\pi)^{-d-1} \int_{-\infty}^\infty \left[ \sum_{k=0}^\infty (2k+d)|\lambda|  (f^\lambda*_\lambda \varphi_k^\lambda)(z)\right] e^{-i\lambda t}  |\lambda|^d d\lambda.
	\end{equation*}
	The pure fractional sublaplacian $\mathcal{L}^s$ can be then defined for $s\ge0$
	via the spectral decomposition 
	\begin{equation}
		\label{def:Lsups0}
		\mathcal{L}^sf(z,t) = (2\pi)^{-d-1} \int_{-\infty}^\infty \left[ \sum_{k=0}^\infty \big((2k+d)|\lambda|\big)^s  (f^\lambda*_\lambda \varphi_k^\lambda)(z) \right] e^{-i\lambda t}  |\lambda|^d d\lambda,
	\end{equation}
	whereas the conformal fractional sublaplacian $ \mathcal{L}_s $ can be defined for $0\le s < d+1 $ by
	\begin{equation}
		\label{def:Lsubs0}
		\mathcal{L}_sf(z,t) = (2\pi)^{-d-1} \int_{-\infty}^\infty \left[ \sum_{k=0}^\infty (2|\lambda|)^s \frac{\Gamma(\frac{2k+d}{2}+\frac{1+s}{2})}{ \Gamma(\frac{2k+d}{2}+\frac{1-s}{2})} (f^\lambda*_\lambda \varphi_k^\lambda)(z) \right] e^{-i\lambda t}|\lambda|^d d\lambda.
	\end{equation} 

The operator $\mathcal{L}_s$ in \eqref{def:Lsubs0} was introduced by Branson, Fontana and Morpurgo \cite{BFM} in $\Heis^d$ (more precisely, see \cite[(1.33)]{BFM}), 
 as the counterpart of the conformal fractional Laplacian from Riemannian geometry. 	The operator $\mathcal{L}_s$ occurs naturally in the context of CR geometry and scattering theory on the Heisenberg group: when we identify $\Heis^d$ as the boundary of the Siegel's upper half space in $ \C^{d+1}$, they have the important property of being conformally invariant (see for instance \cite{BFM}). The operator $\mathcal{L}_s$ differs completely from $\mathcal{L}^s$ since there is no geometry involved in the latter, and these operators only coincide in the limit as $s\to 1$, namely $ \mathcal{L}_1 = \mathcal{L}^1 = \mathcal{L} $. It is known that $\mathcal{L}_s$ also possesses an explicit fundamental solution of the form $c_{d,s}|(z,t)|_{\mathbb H}^{-Q+2s}$ with some constant $c_{d,s}>0$ (see for instance \cite[Section 3]{RT}), while an explicit expression for the fundamental solution for for $\mathcal{L}^s$ is not known.
	
	Equivalently we can also define $\L_s$ and $\L^s$ via the abstract spectral theorem, which comes handy for the purposes of this work. Using the formula \eqref{spectral_decomposition_theorem_1}, for $\L^s$ we can write
	\begin{equation*}
		\<\L^s f,g\> :=\int_0^\infty \lambda^s d\<E_\mathcal{\L}(\lambda)f,g\>,
		\qquad
		D(\L^s) :=\{f\in L^2(\mathbb H^d)\ |\ \int_0^\infty \lambda^{2s} d\<E_\mathcal{\L}(\lambda)f,f\><\infty\}.
	\end{equation*}
On the other hand, \eqref{def:Lsubs0} means that the operator $\mathcal{L}_s$ corresponds to the spectral multiplier
	\begin{equation*}
		(2|\lambda|)^s\frac{\Gamma\big(\frac{(2k+d)|\lambda)}{2|\lambda|}+\frac{1+s}{2}\big)}
		{\Gamma\big(\frac{(2k+d)|\lambda|}{2|\lambda|}+\frac{1-s}{2}\big)}, \quad k\in \N\cup\{0\},\ \lambda\in \R. 
	\end{equation*}
Thus, we can write (at least formally)
	\begin{equation}
		\label{def:Lsubs}
		\mathcal{L}_s:=(2|T|)^s\frac{\Gamma\big(\frac{\mathcal{L}}{2|T|}+\frac{1+s}{2}\big)}
		{\Gamma\big(\frac{\mathcal{L}}{2|T|}+\frac{1-s}{2}\big)}.
	\end{equation}
To define rigorously the RHS of this formula, we need to exploit the joint spectral theory  (see Appendix\til\ref{appendix_joint_spectral_theory} below) for $\L$ and $-iT$, using the fact that they strongly commute. Indeed, if we denote by $\F_T$ the Fourier transform in $t$ and by $\eta$ the Fourier variable in $t$, that is $\mathcal F_T^{-1}T\mathcal F_T=i\eta$, then \eqref{sublaplacian_2} yields
$
\L=\F_T^{-1}(-\partial_z^2-4i\eta N+\eta^2|z|^2)\mathcal F_T
$. 
Let $\widetilde{\mathcal L}(\eta):=-\partial_z^2-4i\eta N+\eta^2|z|^2$ for short. Since $\widetilde{\mathcal L}(\eta)$ and $\eta$ clearly commute strongly, we obtain
\begin{align*}
(\mathcal L-\sigma)^{-1}(-iT-w)^{-1}&=\mathcal F_T^{-1}(\widetilde{\mathcal L}(\eta)-\sigma)^{-1}(\eta-w)^{-1}\mathcal F_T=(-iT-w)^{-1}(\mathcal L-\sigma)^{-1}
\end{align*}
for any $\sigma,w\in \C\setminus\R$. Hence there exists a unique spectral measure $E_{\L,-iT}$ such that, for any measurable, a.e. finite function $\phi$ on $\R^2$, we can define the operator $\phi(\L,-iT)$ via the spectral theorem:
	\begin{align}\label{eq:jointLT}
		\<\phi(\L,-iT)f,g\>&:=\int_{\R^2}\phi(\lambda_1,\lambda_2)d\<E_{\L,-iT}(\lambda_1,\lambda_2)f,g\>, \\
		\notag
		D(\phi(\L,-iT)) &:=\left\{f\in L^2(\mathbb H^d)\ \left|\ \int_{\R^2}|\phi(\lambda_1,\lambda_2)|^2d\<E_{\L,-iT}(\lambda_1,\lambda_2)f,f\> <\infty\right.\right\}. 
	\end{align}
In particular, we can therefore define $\L_s$ choosing $\phi=\Phi_s$ in the above formula, with
\begin{equation*}\label{def:Phis}
	\Phi_s(\lambda_1,\lambda_2):=(2|\lambda_2|)^s\frac{\Gamma\big(\frac{\lambda_1}{2|\lambda_2|}+\frac{1+s}{2}\big)}{\Gamma\big(\frac{\lambda_1}{2|\lambda_2|}+\frac{1-s}{2}\big)},
\end{equation*}
getting again formula \eqref{def:Lsubs}.

Now, the three conditions \ref{H1}, \ref{H2} and \ref{H3} are easily verified for both $\L^s$ and $\L_s$. Indeed, the homogeneity can be checked directly by a change of variables in \eqref{def:Lsubs0} and \eqref{def:Lsups0}, after observing that 
$$ [(e^{i\tau A} f)^{\lambda} *_\lambda \varphi_k^\lambda](z) = e^{-(2d+2)\tau} [f^{\lambda/e^{2\tau}} *_{\lambda/e^{2\tau}} \varphi_k^{\lambda/e^{2\tau}}](e^\tau z). $$
Condition \ref{H2} for $\mathcal L_s$ is trivial. For $\L_s$ with $0<s<d+1$, Stirling’s formula 
$$
\Gamma(t)=\sqrt{\frac{2\pi}{t}}\left(\frac te\right)^t\left(1+O(t^{-1})\right),\quad t\to \infty,
$$
yields that the function
$$
m(r)=\frac{\Gamma\left(r+\frac{1+s}{2}\right)}{r^s\Gamma\left(r+\frac{1-s}{2}\right)}
$$
on $[d/2,\infty)$ satisfies $c_s\le m(r)\le C_s$ with some constants $c_s,C_s>0$. Using this fact with $r=\frac{2k+d}{2}$ and formulas \eqref{def:Lsups0} and \eqref{def:Lsubs0}, we obtain
$$
c_s\|\mathcal L^{s/2}f\|^2\le \|\mathcal L_s^{1/2}f\|^2\le C_s\|\mathcal L^{s/2}f\|^2
$$
since both $\mathcal L_s^{1/2}\mathcal L^{-s/2}$ and $\mathcal L^{-s/2}\mathcal L_s^{1/2}$ correspond to the spectral multiplier
$$
\sqrt{\frac{(2|\lambda|)^s\Gamma\big(\frac{2k+d}{2}+\frac{1+s}{2}\big)}{(2k+d)^s|\lambda|^s\Gamma\big(\frac{2k+d}{2}+\frac{1-s}{2}\big)}}=\sqrt{m\left(\frac{2k+d}{2}\right)}, \quad k\in \N\cup\{0\}.
$$
Condition \ref{H2} with $\H=\L_s$ thus holds. 
Condition \ref{H3} with $\H=\L^s$ is trivial. For $\H=\mathcal L_s$, condition \ref{H3} follows from Proposition \ref{proposition_joint_1} (2) in Appendix \ref{appendix_joint_spectral_theory} with $\varphi(\lambda_1,\lambda_2)= (\Phi_s(\lambda_1,\lambda_2)-\sigma)^{-1}$ and $\psi(\lambda_1)=(\lambda_1-w)^{-1}$. Equivalently, we can also see this from \eqref{def:Lsups0} and \eqref{def:Lsubs0}, since both $(\L_s-\sigma)^{-1}(\L-w)^{-1}$ and $(\L-w)^{-1}(\L_s-\sigma)^{-1}$ correspond to the spectral multiplier 
\begin{equation*}
	\left[ (2|\lambda|)^s\frac{\Gamma\big(\frac{2k+d}{2}+\frac{1+s}{2}\big)}
	{\Gamma\big(\frac{2k+d}{2}+\frac{1-s}{2}\big)}
	-\sigma
	\right]^{-1}
	[(2k+d)|\lambda|-w]^{-1}
	, \quad k\in \N\cup\{0\},\ \lambda \in \R. 
\end{equation*}
Moreover, $\L^s$ and $\L_s$ leave $\Ker N$ invariant, since they are invariant under rotations $z\mapsto Uz$ with $U\in \textbf{SO}(2d)$, and $N$ is a sum of generators of the rotations (see Remark\til\ref{rem:N}).

Finally, it is also worth mentioning that there is yet an another way to characterize the operator $\mathcal L_s$ though an extension problem (see \cite{FGMT,  RT2}).

\section{Joint spectral theory for commuting self-adjoint operators}
\label{appendix_joint_spectral_theory}

The purpose of this appendix is to record the joint spectral theory for two commuting self-adjoint operators. We refer to textbooks \cite{BiSo} for details. 

Let $\H_1$ and $\H_2$  be two self-adjoint operators on a Hilbert space $X$ such that they strongly commute: 
$$
[(\H_1-\sigma)^{-1},(\H_2-w)^{-1}]=0,\quad \sigma,w\in \C\setminus\R, 
$$
or equivalently $[E_{\H_1}(\Omega_1),E_{\H_2}(\Omega_2)]=0$ for any Borel measurable sets $\Omega_1,\Omega_2\in \mathcal B(\R)$. Then there exists a unique spectral measure $E_{\H_1,\H_2}(\cdot)$ on $\mathcal B(\R^2)$ such that $$E_{\H_1,\H_2}(\Omega_1\times\Omega_2)=E_{\H_1}(\Omega_1)E_{\H_2}(\Omega_2)$$ for any $\Omega_1,\Omega_2\in \mathcal B(\R)$ and that $E_{\H_1,\H_2}$ diagonalize $\H_1$ and $\H_2$ simultaneously, namely
$$
\<H_jf,g\>=\int_{\R^2}\lambda_j d\mu_{f,g},\quad f\in D(\H_j),\ g\in X,\ j=1,2,
$$
where $\mu_{f,g}=\<E_{\H_1,\H_2}(\lambda_1,\lambda_2)f,g\>$ (see \cite[Theorems 5.2.6 and 5.5.1]{BiSo}). For measurable $\varphi:\R^2\to \C\cup\{\infty\}$ satisfying $|\varphi|(\lambda)<\infty$ for a.e. $\lambda\in \R^2$, we then define the operator $\varphi(\H_1,\H_2)$ by
\begin{align*}
\<\varphi(\H_1,\H_2)f,g\>&:=\int_{\R^2}\varphi(\lambda_1,\lambda_2)d\mu_{f,g},\quad f,g\in D(\varphi(\H_1,\H_2)), \\
D(\varphi(\H_1,\H_2))&:=\left\{f\in X\ \left|\ \int_{\R^2}|\varphi(\lambda_1,\lambda_2)|^2d\mu_{f,f}<\infty\right.\right\}. 
\end{align*}

In what follows, we put $J_\varphi=\varphi(\H_1,\H_2)$ for short. 

\begin{proposition}
\label{proposition_joint_1}
$J_\varphi$ is a densely defined closed operator on $X$ satisfying the following properties. 
\begin{itemize}
\item[(1)] For $\alpha,\beta\in \C$, $D(\alpha J_\varphi+\beta J_\psi)=D(J_\varphi)\cap D(J_\psi)$ and 
$\overline{\alpha J_\varphi+\beta J_\psi}= J_{\alpha\varphi+\beta\psi}
$. 
In particular, if one of $\varphi$ and $\psi$ belong to $L^\infty(\R^2)$, then ${\alpha J_\varphi+\beta J_\psi}= J_{\alpha\varphi+\beta\psi}$. 
\item[(2)] $D(J_{\varphi}J_{\psi})=D(J_{\varphi\psi})\cap D(J_{\psi})$ and $\overline{J_{\varphi}J_{\psi}}= J_{\varphi\psi}$. In particular, if $\psi\in L^\infty(\R^2)$, then $J_{\varphi}J_{\psi}= J_{\varphi\psi}$. 
\item[(3)] $J_{\overline{\varphi}}=(J_{\varphi})^*$, where $\overline{\varphi}$ is the complex conjugate of $\varphi$. 
\end{itemize}
\end{proposition}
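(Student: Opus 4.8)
The plan is to transport the three assertions to the model of multiplication operators, where they reduce to standard measure theory. First I would invoke the spectral representation attached to the joint spectral measure $E=E_{\H_1,\H_2}$ on $\mathcal B(\R^2)$: decomposing $X$ into an at most countable orthogonal sum $\bigoplus_k X_k$ of $E$-cyclic subspaces, each $X_k$ is unitarily equivalent, via some $U_k\colon X_k\to L^2(\R^2,\mu_k)$ with $\mu_k$ a finite Borel measure on $\R^2$, in such a way that $E$ becomes multiplication by indicator functions; checking against the defining formula for $J_\varphi$ in terms of $\mu_{f,g}$ then shows that $U_k J_\varphi U_k^{-1}$ is the maximal operator $M_\varphi$ of multiplication by $\varphi$ on $L^2(\R^2,\mu_k)$, with domain $\{h:\varphi h\in L^2(\R^2,\mu_k)\}$. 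Since orthogonal direct sums of closed densely defined operators respect taking adjoints, closures, sums and products, it suffices to establish (1)--(3) for such a multiplication operator $M_\varphi$ on a fixed $L^2(M,\nu)$. (One could equally avoid the cyclic decomposition and argue directly from the identity $\|J_\varphi f\|^2=\int_{\R^2}|\varphi|^2\,d\mu_{f,f}$ together with polarization; the bookkeeping is unchanged.)

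For (1), adopting the convention that $\alpha J_\varphi$ always carries the domain $D(J_\varphi)$, the identity $D(\alpha M_\varphi+\beta M_\psi)=D(M_\varphi)\cap D(M_\psi)$ is immediate, and the operator $\alpha M_\varphi+\beta M_\psi$ is visibly a restriction of the closed operator $M_{\alpha\varphi+\beta\psi}$. To obtain $\overline{\alpha M_\varphi+\beta M_\psi}=M_{\alpha\varphi+\beta\psi}$ I would check that $D(M_\varphi)\cap D(M_\psi)$ is a core: for $g\in D(M_{\alpha\varphi+\beta\psi})$ the truncations $g_n:=\mathbf 1_{\{|\varphi|\le n,\,|\psi|\le n\}}\,g$ lie in $D(M_\varphi)\cap D(M_\psi)$ and satisfy $g_n\to g$, $(\alpha\varphi+\beta\psi)g_n\to(\alpha\varphi+\beta\psi)g$ in $L^2$ by dominated convergence. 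The $L^\infty$ refinement then follows at once: if, say, $\psi$ is bounded then $D(M_\psi)$ is all of $L^2(M,\nu)$ and — away from the trivial case $\alpha=0$ — one has $D(M_\varphi)=D(M_{\alpha\varphi+\beta\psi})$, so the sum is already closed.

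Part (2) runs along the same lines: $M_\varphi M_\psi$ has natural domain $\{h:\psi h\in L^2,\ \varphi\psi h\in L^2\}=D(M_{\varphi\psi})\cap D(M_\psi)$ and acts as multiplication by $\varphi\psi$, hence is a restriction of $M_{\varphi\psi}$; the truncations $g_n:=\mathbf 1_{\{|\psi|\le n,\,|\varphi\psi|\le n\}}\,g$ exhibit this domain as a core, whence $\overline{M_\varphi M_\psi}=M_{\varphi\psi}$, and if $\psi$ is bounded the domain of the product is already $D(M_{\varphi\psi})$. For (3), the relation $\langle M_\varphi f,g\rangle=\langle f,M_{\overline\varphi}g\rangle$ valid for $f\in D(M_\varphi)$ and $g\in D(M_{\overline\varphi})=D(M_\varphi)$ gives $M_{\overline\varphi}\subset(M_\varphi)^*$, and testing $g\in D((M_\varphi)^*)$ against functions supported where $|\varphi|\le n$ forces $\overline\varphi g\in L^2$, yielding equality; pulling back through the $U_k$ gives $J_{\overline\varphi}=(J_\varphi)^*$.

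None of the above is deep; it is precisely the measure-theoretic functional calculus recorded in \cite{BiSo}. The two places that need to be written out with a little care are the identification of $J_\varphi$ with genuine multiplication by $\varphi$ in each cyclic representation — so that the unbounded calculus defined through the scalar measures $\mu_{f,g}$ really coincides with multiplication on the $L^2$ models — and the dominated-convergence arguments producing cores in the closure statements of (1) and (2). I do not expect a genuine obstacle: these are simply the steps that are not immediate tautologies.
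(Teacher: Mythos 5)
Your argument is correct and is essentially the same approach the paper delegates to: it cites \cite[Theorems 5.4.3--5.4.7]{BiSo}, whose proof in the single-operator case is exactly this reduction to maximal multiplication operators on a cyclic $L^2$-model followed by truncation arguments, and nothing changes in passing from a spectral measure on $\R$ to one on $\R^2$. The one small point worth keeping explicit, as you already do, is the domain convention $D(\alpha J_\varphi)=D(J_\varphi)$ even when $\alpha=0$, without which the domain identity in (1) would fail.
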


\begin{proof}
The proof is completely the same as of the single case $\varphi(\H)$. See \cite[Theorems 5.4.3--5.4.7]{BiSo} for the proof of the single case. 
\end{proof}

	
\section*{Acknowledgement}

L. Fanelli, L. Roncal and N. M. Schiavone 
are partially supported 
by the Basque Government through the BERC 2022--2025 program 
and 
by the Spanish Agencia Estatal de Investigaci\'{o}n
through BCAM Severo Ochoa excellence accreditation CEX2021-001142-S/MCIN/AEI/10.13039/501100011033.

L. Fanelli is also supported by the projects PID2021-123034NB-I00/MCIN/AEI/10.13039/501100011033 funded by the Agencia Estatal de Investigaci\'{o}n, 
IT1615-22 funded by the Basque Government, 
and by Ikerbasque.

H. Mizutani is partially
supported by JSPS KAKENHI Grant Numbers JP21K03325 and JP24K00529.

L. Roncal is also supported by the projects
RYC2018-025477-I and CNS2023-143893, funded by Agencia Estatal de Investigaci\'on, and PID2023-146646NB-I00 funded by MICIU/AEI/10.13039/501100011033 and by ESF+, and IKERBASQUE.

N. M. Schiavone was also supported by the  grant FJC2021-046835-I funded by the EU \lq\lq NextGenerationEU''/PRTR and by MCIN/AEI/10.13039/501100011033,
and
by the project PID2021-123034NB-I00/MCIN/ AEI/10.13039/501100011033 funded by the Agencia Estatal de Investigaci\'{o}n.
He is also member of the \lq\lq Gruppo Nazionale per L'Analisi Matematica, la Probabilit\`{a} e le loro Applicazioni'' (GNAMPA) of the \lq\lq Istituto Nazionale di Alta Matematica'' (INdAM).


\end{document}